\newtheorem{theorem}{Theorem}[section]
\newtheorem{proposition}[theorem]{Proposition}
\newtheorem{lemma}[theorem]{Lemma}
\newtheorem{corollary}[theorem]{Corollary}
\theoremstyle{definition}
\newtheorem*{definition}{Definition}
\newtheorem*{example}{Example}
\newtheorem*{remark}{Remark}
\newcommand{\R}{\mathds{R}}
\newcommand{\Z}{\mathds{Z}}
\newcommand{\Q}{\mathds{Q}}
\renewcommand{\Re}{\operatorname{Re}}
\DeclareMathOperator{\fix}{\mathrm{Fix}}
\begin{document}
\title{Real Riemann Surfaces: Smooth and Discrete}

\author{Johanna D\"untsch$^{1}$ \and Felix G\"unther$^{2}$}

\footnotetext[1]{Institut f\"ur Mathematik, Technische Universit\"at Berlin, Stra{\ss}e des 17. Juni 136, 10623 Berlin, Germany}
\footnotetext[2]{Institut f\"ur Differentialgeometrie, Leibniz Universit\"at Hannover, Welfengarten 1, 30167 Hannover, Germany. E-mail: felix.guenther@math.uni-hannover.de}

\maketitle
\setlength{\parindent}{0em}

\begin{abstract}
	\noindent
	This paper develops a discrete theory of real Riemann surfaces based on quadrilateral cellular decompositions (quad-graphs) and a linear discretization of the Cauchy-Riemann equations. We construct a discrete analogue of an antiholomorphic involution and classify the topological types of discrete real Riemann surfaces, recovering the classical results on the number of real ovals and the separation of the surface. 
	Central to our approach is the construction of a symplectic homology basis adapted to the discrete involution. Using this basis, we prove that the discrete period matrix admits the same canonical decomposition $\Pi = \frac{1}{2} H + i T$ as in the smooth setting, where $H$ encodes the topological type and $T$ is purely imaginary. This structural result bridges the gap between combinatorial models and the classical theory of real algebraic curves.
	\\ \vspace{0.5ex}
	
	\noindent
	\textbf{2020 Mathematics Subject Classification:} 39A12; 30F20; 30F30.\\ \vspace{0.5ex}
	
	\noindent
	\textbf{Keywords:} Discrete complex analysis, discrete Riemann surface, real Riemann surface, M-curve, period matrix, discrete homology.
\end{abstract}
		
\raggedbottom
\setlength{\parindent}{0pt}
\setlength{\parskip}{1ex}

\section{Introduction}\label{sec:intro}

Discrete complex analysis has evolved into a powerful field that bridges classical function theory, geometry, and statistical physics.
While early foundations were laid by Isaacs \cite{Is41}, Lelong-Ferrand \cite{Fe44}, and Duffin \cite{Du56}, the field has seen a renaissance in recent decades, driven by the study of integrability and conformal invariance in lattice models \cite{Sm10S, Ke02, ChSm11}.
A central object of study in this linear theory is the concept of a \emph{discrete Riemann surface}, modeled on quadrilateral cellular decompositions (quad-graphs) equipped with complex weights. This framework, developed significantly by Mercat \cite{Me01, Me07} and Bobenko and the second author \cite{BoG15, BoG17}, allows for the discretization of key structures such as holomorphic forms, period matrices, and the Abel-Jacobi map.

Despite these advances, a discrete theory of \emph{real Riemann surfaces} -- compact Riemann surfaces endowed with an antiholomorphic involution -- has been largely absent.
In the smooth setting, real Riemann surfaces are fundamental objects. They correspond to real algebraic curves \cite{Sil01, Vin93} and play a crucial role in the construction of real solutions to non-linear integrable equations, such as the Kadomtsev-Petviashvili equation \cite{BBEIM94}.
Classically, the presence of an antiholomorphic involution $\tau$ imposes strong topological constraints on the surface, most notably Harnack's inequality regarding the number of fixed components (real ovals), and forces the period matrix to take a specific canonical form.

From this classical point of view, real Riemann surfaces and their period matrices encode fundamental topological and analytic information, dating back to the work of Weichold and later developments in real algebraic geometry.
The present paper shows that this structure admits a faithful discrete analogue: discrete real Riemann surfaces carry period matrices with the same canonical decomposition as in the smooth case, determined purely by topological data and discrete harmonic theory.
This constitutes the first systematic discrete analogue of the theory of real Riemann surfaces, including a discrete version of the canonical period matrix decomposition.

\subsection{Contribution of this paper}

In this paper, we establish a complete discrete theory of real Riemann surfaces. We work in the setting of bipartite quad-graphs equipped with a discrete complex structure (complex weights). Our main contributions are:
\begin{enumerate}
	\item \textbf{Topological Classification:} We define a discrete antiholomorphic involution on quad-graphs and prove that the discrete surface obeys the same topological classification as the continuous one. We distinguish between dividing and non-dividing types and verify the bounds on the number of discrete real ovals (Theorem~\ref{th:number_ovals_discrete}).
	\item \textbf{Adapted Homology Basis:} We construct a symplectic basis of the discrete homology that is adapted to the involution. Our approach relies on a continuous realization of the discrete surface, enabling us to transfer the topological existence results and properties of the canonical homology basis from the smooth theory directly to the combinatorial setting.
	\item \textbf{Discrete Period Matrix:} We prove that the period matrix $\Pi$ of a discrete real Riemann surface satisfies the identity
	\[
	\Pi = \frac{1}{2} H + i T,
	\]
	where $H$ is a matrix with integer entries reflecting the topological type, and $T$ is a real matrix. This mirrors the classical result exactly (Theorem~\ref{th:completeperiodmatrix}). In particular, for discrete M-curves, the period matrix is purely imaginary.
\end{enumerate}

\subsection{Discrete versus continuous theory}

A recurring theme in discrete differential geometry is the challenge of defining discrete objects that preserve the essential symmetries and structures of the smooth theory.
The convergence of discrete period matrices to their continuous counterparts has been established for triangulations \cite{BoSk12} and recently for general quadrangulations \cite{G23}.
However, the structural properties of these matrices \emph{before} the limit is taken are of independent interest, both for numerical stability and for understanding the combinatorial nature of the theory.
To ensure that our discrete construction is natural, we first revisit the continuous theory in Section~\ref{sec:continuous}. We provide an elementary derivation of the classical period matrix structure, specifically tailored to be transferable to the discrete graph setting. This elementary treatment, which avoids heavy algebraic geometry machinery, serves as the blueprint for our discrete definitions in Section~\ref{sec:discrete_real}.

\subsection{Outline}

The paper is structured as follows.
Section~\ref{sec:continuous} reviews the topology of real Riemann surfaces and derives the canonical homology basis in the smooth setting.
We present these classical results in a constructive manner that anticipates the discretization.
In Section~\ref{sec:Riemann}, we provide the necessary background on discrete Riemann surfaces, including the definition of the medial graph, discrete forms, and the discrete complex structure.
Section~\ref{sec:discrete_real} contains the main results: we define discrete real Riemann surfaces, analyze their topology, and derive the characterization of the discrete period matrix.
Finally, we discuss explicit constructions of discrete real Riemann surfaces of various genera and types.
Algebraic details on bilinear forms over $\mathds{Z}_2$, which are essential for the proofs, are collected in Appendix~\ref{sec:bilinearforms}.

%%%%%%%%%%%%%%%%

\section{Real Riemann surfaces}\label{sec:continuous}

The primary objective of this section is to derive a canonical form for the period matrix of a real Riemann surface in a manner that allows for direct discretization. While the results presented here are classical and appear in various works \cite{Vin93, Wei83, BBEIM94}, standard derivations often rely on advanced algebraic geometry machinery (see the discussion at the end of Section~\ref{sec:homology}). Such techniques are not immediately transferable to the combinatorial setting of discrete Riemann surfaces.

To facilitate the discretization in Section~\ref{sec:discrete_real}, we restrict our exposition to arguments that are either purely topological or linear. Topological properties, such as the separation of the surface by fixed-point sets, are naturally preserved in the graph-theoretical framework. Linear relations, such as the properties of the period matrix, can be adapted using the linear discretization theory for holomorphic forms presented in Section~\ref{sec:Riemann}.
Consequently, we provide a self-contained and elementary introduction to the theory of real Riemann surfaces, synthesizing classical arguments with a perspective tailored to our discrete goals.
Concepts defined here on smooth manifolds -- such as the separation of the surface by fixed-point sets -- will correspond directly to cuts and cycles on the quad-graph in the discrete setting.

Throughout this section, let $\Sigma$ be a compact Riemann surface of genus $g$. A \emph{real Riemann surface} is defined as a pair $(\Sigma, \tau)$, where $\Sigma$ is a compact Riemann surface and $\tau: \Sigma \to \Sigma$ is an antiholomorphic involution (a conformal map satisfying $\tau \circ \tau = \mathrm{id}$ which is antiholomorphic in local coordinates).

\subsection{Real ovals and topological classification}\label{sec:realovals}

We first characterize the fixed point set of the involution. The following local result is standard; see, for example, \cite{J04}.

\begin{lemma}\label{lem:fixedpoint}
	Let $\mathds{D} \subset \mathds{C}$ be the open unit disk centered at the origin, and let $f : \mathds{D} \rightarrow \mathds{D}$ be an antiholomorphic involution with $f(0) = 0$. Then, the set of fixed points of $f$ forms a straight line segment passing through the origin.
\end{lemma}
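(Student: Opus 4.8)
The plan is to avoid working directly with the (potentially curved) fixed-point equation and instead to classify $f$ completely, exploiting that an antiholomorphic \emph{involution} of the disk is far more rigid than a general antiholomorphic self-map. The starting observation is that the relation $f\circ f=\mathrm{id}$ forces $f$ to be a bijection of $\mathds{D}$ onto itself, since it is its own inverse; in particular $f(\mathds{D})=\mathds{D}$. This surjectivity is the crucial input and is exactly what distinguishes an involution from an arbitrary self-map: without it the fixed set could genuinely bend.

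Given this, I would set $\psi(z):=\overline{f(z)}$. Since $f$ is antiholomorphic, $\psi$ is holomorphic; since $f$ is a bijection of $\mathds{D}$ and complex conjugation preserves $\mathds{D}$, the map $\psi$ is a holomorphic automorphism of $\mathds{D}$, and $\psi(0)=\overline{f(0)}=0$. Now I invoke the Schwarz lemma (equivalently, the classification of automorphisms of $\mathds{D}$ fixing the origin): every such automorphism is a rotation $\psi(z)=\lambda z$ with $|\lambda|=1$. Unwinding the definition yields the normal form $f(z)=\overline{\lambda z}=\bar\lambda\,\bar z$; writing $\bar\lambda=e^{i\theta}$ gives $f(z)=e^{i\theta}\bar z$. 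As a sanity check, $f(f(z))=e^{i\theta}\,\overline{e^{i\theta}\bar z}=|e^{i\theta}|^{2}z=z$, consistent with the hypothesis.

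With $f$ in this explicit form the fixed-point set is immediate: writing $z=re^{i\alpha}$, the equation $e^{i\theta}\bar z=z$ becomes $re^{i(\theta-\alpha)}=re^{i\alpha}$, that is, $r=0$ or $2\alpha\equiv\theta\pmod{2\pi}$. Hence the fixed points are precisely those $z\in\mathds{D}$ with argument $\theta/2$ or $\theta/2+\pi$, which is the diameter of $\mathds{D}$ in direction $e^{i\theta/2}$ -- a straight line segment through the origin, as claimed.

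The only genuine obstacle is conceptual rather than computational: one must resist analyzing $\{z : f(z)=z\}$ via the implicit function theorem, which would only establish that the fixed set is a smooth real-analytic arc and not that it is straight. Straightness is a global consequence of the rigidity of disk automorphisms, so the key step is to promote $\overline{f}$ to a genuine automorphism \emph{before} applying the Schwarz lemma. I would therefore take care to justify the surjectivity of $f$ explicitly, since this is exactly where the involution hypothesis (as opposed to mere idempotence of a partially defined map) enters the argument.
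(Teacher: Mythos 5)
Your proposal is correct and takes essentially the same route as the paper: both pass to the holomorphic map $\psi(z)=\overline{f(z)}$, use Schwarz-lemma rigidity to show it is a rotation, and conclude that $f(z)=e^{i\theta}\bar z$ is a reflection whose fixed-point set is a diameter of $\mathds{D}$. The only difference is packaging: the paper uses the involution to force equality in the Schwarz inequality directly, whereas you use it to obtain bijectivity and then cite the classification of disk automorphisms fixing the origin --- a classification whose proof is exactly that same equality argument.
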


\begin{proof}
	Since $f$ is antiholomorphic, the function $\bar{f}$ (defined by $z \mapsto \overline{f(z)}$) is holomorphic with $\bar{f}(0) = 0$. By the Schwarz Lemma, $|\bar{f}(z)| \leq |z|$ for all $z \in \mathds{D}$. Applying the involution property $f(f(z)) = z$, we obtain
	\[
	|z| = |\bar{f}(\bar{f}(z))| \leq |\bar{f}(z)| \leq |z|.
	\]
	Equality in the Schwarz Lemma implies that $\bar{f}(z) = e^{i\theta} z$ for some $\theta \in \mathds{R}$, representing a rotation. Consequently, $f(z) = e^{-i\theta} \bar{z}$. This is a reflection across the line $\{ r e^{-i\theta/2} \mid r \in \mathds{R} \}$. The intersection of this line with $\mathds{D}$ is the set of fixed points.
\end{proof}

Using this local linearization, we obtain the global structure of the fixed point set.

\begin{corollary}\label{cor:fixedpoint}
	Let $\tau: \Sigma \to \Sigma$ be an antiholomorphic involution. The fixed point set $\fix(\tau) = \{p \in \Sigma \mid \tau(p)=p\}$ consists of a disjoint union of simple closed curves.
\end{corollary}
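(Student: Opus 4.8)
The plan is to combine the local result of Lemma~\ref{lem:fixedpoint} with the compactness of $\Sigma$ and a standard manifold-theoretic argument. First I would observe that $\fix(\tau)$ is a closed subset of $\Sigma$: since $\tau$ is continuous and $\Sigma$ is Hausdorff, the fixed-point set is the preimage of the diagonal under $p \mapsto (p, \tau(p))$, hence closed, and therefore compact.

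Next I would establish that $\fix(\tau)$ is a $1$-dimensional submanifold. The key input is Lemma~\ref{lem:fixedpoint}: around any fixed point $p$, choose an isothermal (conformal) local coordinate centered at $p$; in this chart $\tau$ becomes an antiholomorphic involution of a disk fixing the origin, so by the lemma its fixed-point set is a straight line segment through the origin. Thus every fixed point has a neighborhood in which $\fix(\tau)$ is a smooth embedded arc, which shows $\fix(\tau)$ is locally homeomorphic to $\mathds{R}$ and has no boundary points and no isolated points. Together with closedness, this makes $\fix(\tau)$ a compact $1$-manifold without boundary embedded in $\Sigma$. Here I would be careful to check that the local straight-line pieces glue consistently, i.e.\ that the smooth structures agree on overlaps; this follows because the linearization in Lemma~\ref{lem:fixedpoint} is canonical (determined by the $1$-jet of $\tau$) and the transition maps between conformal charts are holomorphic, hence respect the antiholomorphic structure.

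Finally I would invoke the classification of compact $1$-manifolds: every compact $1$-manifold without boundary is a finite disjoint union of circles. Applying this to $\fix(\tau)$ yields that it is a disjoint union of finitely many simple closed curves, which is exactly the claim. I would note that this argument also shows the union is \emph{finite}, a fact that will be needed in Section~\ref{sec:realovals} for Harnack-type bounds.

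The main obstacle is the gluing step: ensuring that the local arcs provided by Lemma~\ref{lem:fixedpoint} assemble into a genuine embedded submanifold rather than merely a locally arc-like set (for instance, ruling out transverse self-crossings or branch points at a fixed point). This is handled by the fact that the lemma gives a \emph{reflection} normal form $f(z) = e^{-i\theta}\bar{z}$ in each chart, whose fixed set is a single smooth line with a well-defined tangent direction; since $\tau$ is a global involution, these tangent lines are intrinsic, so distinct local arcs cannot meet at a fixed point. The remaining verification---that the collection is finite---is immediate from compactness once the submanifold structure is in place.
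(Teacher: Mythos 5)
Your overall route is the same as the paper's: linearize $\tau$ near a fixed point via Lemma~\ref{lem:fixedpoint}, conclude that $\fix(\tau)$ is locally a smooth arc, and then use compactness to assemble the arcs into finitely many disjoint simple closed curves. In the final step you are actually more careful than the paper, which simply asserts that ``these local segments connect to form a finite number of disjoint closed curves,'' whereas you invoke the classification of compact $1$-manifolds without boundary; that is the clean way to finish, and your discussion of why arcs cannot branch or cross at a fixed point (the reflection normal form has a single fixed line with intrinsic tangent direction) is sound.

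There is, however, one step that does not follow as written: the claim that in an \emph{arbitrary} conformal coordinate centered at $p$, the map $\tau$ ``becomes an antiholomorphic involution of a disk fixing the origin.'' Lemma~\ref{lem:fixedpoint} applies to self-maps $f:\mathds{D}\to\mathds{D}$, but the conjugated map $z\circ\tau\circ z^{-1}$ is only defined on $z\bigl(V\cap\tau(V)\bigr)$ and is not a self-map of the disk unless the chart domain is $\tau$-invariant, which a generic chart is not. The paper devotes half of its proof to exactly this point: it replaces the coordinate neighborhood $V$ by $U\coloneq V\cap\tau(V)$, which satisfies $\tau(U)=U$, and then applies the Riemann Mapping Theorem to obtain a biholomorphism $z:U\to\mathds{D}$ with $z(p)=0$, so that $z\circ\tau\circ z^{-1}$ genuinely is an antiholomorphic involution of $\mathds{D}$ fixing $0$. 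This is a fixable omission rather than a wrong approach, but without some such construction of a $\tau$-invariant disk neighborhood, your appeal to Lemma~\ref{lem:fixedpoint} is not justified.
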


\begin{proof}
	Suppose $\fix(\tau)$ is non-empty. Let $p \in \fix(\tau)$. Since $\tau$ is continuous, $\fix(\tau)$ is closed. 
	Let $V$ be a simply connected coordinate neighborhood of $p$. Define $U \coloneq V \cap \tau(V)$. Then $p \in U$ and $\tau(U) = U$.
	By the Riemann Mapping Theorem, there exists a biholomorphic map $z: U \to \mathds{D}$ mapping the open symmetric neighborhood $U$ to the unit disk $\mathds{D}$ such that $z(p) = 0$.
	
	The induced map $\tilde{\tau} = z \circ \tau \circ z^{-1}$ is an antiholomorphic involution of $\mathds{D}$ fixing the origin. By Lemma~\ref{lem:fixedpoint}, the fixed points of $\tilde{\tau}$ in $\mathds{D}$ form a straight line segment. Consequently, the fixed points of $\tau$ in $U$ form a curve segment passing through $p$. Since $\Sigma$ is compact, these local segments connect to form a finite number of disjoint closed curves.
\end{proof}

\begin{definition}
	The connected components of $\fix(\tau)$ are called \emph{real ovals}. We denote their number by $k$.
\end{definition}

\begin{remark}[Discrete preview]
	In Section~\ref{sec:discrete_real}, the smooth surface $\Sigma$ will be replaced by a bipartite quad-graph $\Lambda$, and the involution $\tau$ will become a graph automorphism reversing the orientation of faces. The real ovals will correspond to cycles of edges and diagonals fixed by $\tau$ (in the color-preserving case) or cycles of segments connecting the midpoints of opposite edges of the quadrilaterals (in the color-reversing case).
\end{remark}

The topology of a real Riemann surface is determined by how these ovals separate the surface. The following lemma is due to Weichold \cite{Wei83}. We provide an alternative proof utilizing a coloring argument that is particularly intuitive.

\begin{lemma}\label{lem:12components}
	Let $(\Sigma, \tau)$ be a real Riemann surface. The complement $\Sigma \setminus \fix(\tau)$ consists of either one or two connected components.
	\begin{itemize}
		\item If it consists of one component, the quotient $\Sigma/\tau$ is non-orientable.
		\item If it consists of two components, they are interchanged by $\tau$, and $\Sigma/\tau$ is orientable.
	\end{itemize}
\end{lemma}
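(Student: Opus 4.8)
The plan is to reduce the statement to a purely combinatorial fact about how the real ovals separate $\Sigma$, using only the local reflection normal form of Lemma~\ref{lem:fixedpoint} together with the orientability of $\Sigma$. By Corollary~\ref{cor:fixedpoint}, $\fix(\tau)$ is a finite disjoint union of simple closed curves, so $X \coloneq \Sigma \setminus \fix(\tau)$ is an open surface on which $\tau$ acts without fixed points. The single fact I extract from the local picture is that near each oval $\tau$ is a reflection; hence it interchanges the two local sides of that oval, and in particular every oval is two-sided.

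First I would set up an incidence graph $G$ whose vertices are the connected components of $X$ and whose edges are the ovals, each oval being incident to the component(s) lying on its two sides. Since $\Sigma$ is connected and $\Sigma = X \cup \fix(\tau)$, any path between two components must cross ovals, so $G$ is connected. The reflection property says that $\tau$ swaps the two sides of each oval, hence swaps the two endpoints of each edge of $G$. For an involutive automorphism that reverses every edge this is very restrictive: if $C_0$ is any vertex, then reversing an edge joining $C_0$ to a neighbour $w$ forces $w = \tau(C_0)$, so every neighbour of $C_0$ is the single vertex $\tau(C_0)$. By connectivity $G$ therefore has at most the two vertices $\{C_0, \tau(C_0)\}$. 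This yields the dichotomy: either $X$ is connected (one vertex, $\tau(C_0)=C_0$), or $X$ has exactly two components that are interchanged by $\tau$ (two vertices). This is the promised coloring argument: the at most two vertices are the two admissible colours, and $\tau$ flips them.

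It then remains to match each case with the orientability of $\Sigma/\tau$, which near the ovals is a surface with boundary obtained from the free reflection model, with $\intn(\Sigma/\tau) = X/\tau$. In the two-component case every oval borders both $C_0$ and $\tau(C_0)$, so the closure $\overline{C_0}$ is a compact subsurface-with-boundary of the oriented surface $\Sigma$ that maps homeomorphically onto $\Sigma/\tau$; the quotient therefore inherits an orientation. In the one-component case $X \to X/\tau$ is a connected double cover whose nontrivial deck transformation is $\tau$. Were the base orientable, pulling its orientation back along the covering would produce an orientation of $X$ invariant under $\tau$, contradicting the fact that $\tau$, being antiholomorphic, reverses the complex orientation of $X$. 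Hence the quotient is non-orientable precisely when $X$ is connected.

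The step I expect to be the main obstacle is the clean passage between the surface and its quotient: verifying that $\Sigma/\tau$ is genuinely a manifold with boundary (the ovals) and that $\tau$ restricts to a free, properly discontinuous action on $X$, so that $X \to X/\tau$ is an honest double cover with connected base. Once this is established, both the counting argument and the orientation pull-back are formal. The remaining care lies in justifying that $\tau$ reverses each edge of $G$, which is exactly the two-sidedness of the ovals supplied by the reflection normal form of Lemma~\ref{lem:fixedpoint}, together with the observation that $\tau$ maps the germ of a component along an oval into the germ of the opposite side.
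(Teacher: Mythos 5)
Your proposal is correct and takes essentially the same route as the paper: your incidence graph with an edge-reversing involution is a repackaging of the paper's two-coloring argument (both rest on $\tau$ swapping the two local sides of each oval, plus connectedness of $\Sigma$), and your treatment of the two-component case via the homeomorphism $\overline{C_0} \cong \Sigma/\tau$ matches the paper's. Your orientation pull-back argument along the double cover $X \to X/\tau$ is a welcome elaboration of the step the paper dispatches with ``since $\tau$ reverses orientation, the projection to the quotient yields a non-orientable surface,'' but it is the standard justification of that same claim, not a different approach.
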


\begin{proof}
	Since the real ovals are disjoint simple closed curves, they form the boundary of the connected components of $\Sigma \setminus \fix(\tau)$. We assign a color to each component and show that two colors suffice.
	
	Let $x_0$ be a real oval. Locally, $x_0$ separates a neighborhood into two sides. Assume the component on one side is colored $A$ and the component on the other side is colored $B$. Let $x_1$ be another oval (possibly $x_0$ itself) forming part of the boundary of the $A$-colored component. Consider a path $\gamma$ within the $A$-component connecting a point near $x_0$ to a point near $x_1$.
	Since $\gamma$ avoids $\fix(\tau)$, the image path $\tau(\gamma)$ lies entirely in $\Sigma \setminus \fix(\tau)$. Because $\tau$ fixes the ovals but swaps the local sides (as it is an orientation-reversing reflection locally), $\tau(\gamma)$ must connect the $B$-side of $x_0$ to the $B$-side of $x_1$.
	
	Since $\Sigma$ is connected, any two points in $\Sigma \setminus \fix(\tau)$ can be connected by a path. This path determines a chain of adjacent components. The argument above shows that traversing an oval always flips the side (from $A$ to $B$ or vice versa). Thus, we can consistently color the entire complement with at most two colors.
	\begin{itemize}
		\item If $A \neq B$, $\Sigma \setminus \fix(\tau)$ has two components $R_1, R_2$ with $\tau(R_1) = R_2$. The quotient $\Sigma/\tau$ is homeomorphic to the closure of $R_1$, which is an orientable surface with boundary.
		\item If $A = B$, there is a path connecting the two local sides of an oval without crossing $\fix(\tau)$. $\Sigma \setminus \fix(\tau)$ is connected. Since $\tau$ reverses orientation, the projection to the quotient yields a non-orientable surface. \qedhere
	\end{itemize}
\end{proof}

Based on this lemma, we classify real Riemann surfaces as follows:

\begin{definition}
	A real Riemann surface $(\Sigma, \tau)$ is called:
	\begin{enumerate}
		\item[(i)] \textbf{Dividing}, if $\Sigma \setminus \fix(\tau)$ is disconnected (two components).
		\item[(ii)] \textbf{Non-dividing}, if $\Sigma \setminus \fix(\tau)$ is connected.
	\end{enumerate}
	If the number of real ovals attains the maximum possible value $k = g+1$ (see Proposition~\ref{prop:number_ovals}), the surface is called an \emph{M-curve}.
\end{definition}

\begin{example}
	Figure~\ref{fig:dividing6} illustrates a dividing surface of genus 6. The involution represents a reflection across the horizontal plane, exchanging the upper and lower halves.
	Figure~\ref{fig:non-dividing12} depicts non-dividing surfaces. On the left (genus 1), the involution maps the upper triangle to the lower one (with appropriate identification), yielding a single connected complement. In the right figure (genus 2), $\tau$ maps the right quadrilateral to the upper one; for example, we have $\tau(a_1) = b_1$.
	Hyperelliptic curves defined by $y^2 = P(x)$ with a real polynomial $P$ having all real roots are classic examples of M-curves.
\end{example}

%----------------------------------------------------------------
\begin{figure}[!ht]
	\centering
	\includegraphics[width=0.7\textwidth]{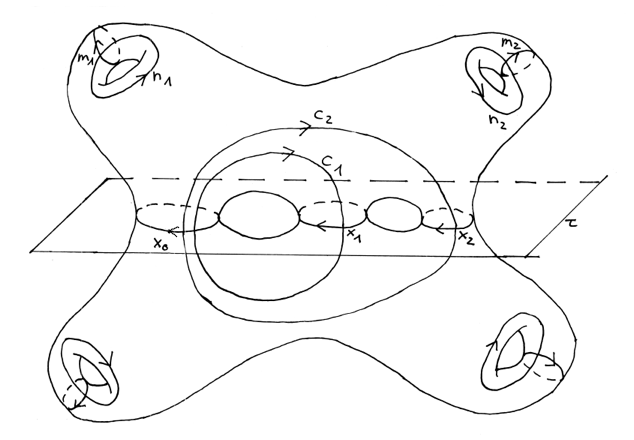}
	\caption{A dividing real Riemann surface of genus $g=6$ with $k=3$ real ovals ($x_0, x_1, x_2$). The involution $\tau$ reflects across the plane containing the ovals, swapping the two components. The cycles $m_i, n_i, c_i$ indicate the symplectic basis elements derived in Proposition~\ref{prop:realhomology}.}
	\label{fig:dividing6}
\end{figure}
%-----------------------------------------------------------------

%----------------------------------------------------------------
\begin{figure}[htbp]
	\centering
	\subfloat[Genus 1 (Torus)]{
		\begin{tikzpicture}[scale=3.5, >=Latex, thick]
			\coordinate (A) at (0,0);
			\coordinate (B) at (0,1);
			\coordinate (C) at (1,1);
			\coordinate (D) at (1,0);
			
			\draw[dashed] (B) -- (D) node[midway, above right] {$\fix(\tau)$};
			
			\draw[->] (A) -- (B) node[midway, left] {$b$};
			\draw[-] (D) -- (C);
			
			\draw[->] (A) -- (D) node[midway, below] {$a$};
			\draw[-] (B) -- (C);
		\end{tikzpicture}
	}
	\qquad
	% Genus 2
	\subfloat[Genus 2]{
		\begin{tikzpicture}[scale=2.8, >=Latex, thick]
			\coordinate (O) at (0,0);
			\coordinate (A1) at (1.4,0);
			\coordinate (B1) at (0,1.4);
			\coordinate (A2) at (1.4,0.7);
			\coordinate (B2) at (0.7,1.4);
			\coordinate (C) at (0.7,0.7);
			
			\draw (O) -- (B1) -- (B2) -- (C) -- (A2) -- (A1) -- cycle;
			
			\draw[dashed] (O) -- (C) node[midway, below right] {$\fix(\tau)$};
			
			\draw[->] (O) -- (A1) node[midway, below] {$a_1$};
			\draw[->] (C) -- (A2) node[midway, below] {$a_2$};
			\draw[->] (O) -- (B1) node[midway, left] {$b_1$};
			\draw[->] (C) -- (B2) node[midway, left] {$b_2$};
		\end{tikzpicture}
	}
	\caption{Examples of non-dividing real Riemann surfaces using polygonal representations with identified edges. The dashed lines represent the fixed point set $\fix(\tau)$. Both examples exhibit $k=1$ oval but remain connected after removing $\fix(\tau)$.}
	\label{fig:non-dividing12}
\end{figure}
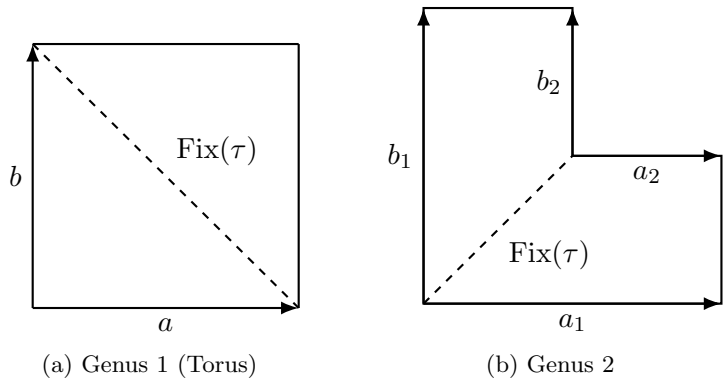
%----------------------------------------------------------------

We conclude this subsection with the classical bound on the number of real ovals, known as Harnack's inequality. We present a topological proof based on the Euler characteristic, which highlights the structural relationship between $\Sigma$ and the quotient $\Sigma/\tau$.

\begin{proposition}\label{prop:number_ovals}
	Let $(\Sigma, \tau)$ be a real Riemann surface of genus $g$ with $k$ real ovals.
	\begin{enumerate}
		\item[(i)] \textbf{Harnack's Inequality:} $0 \leq k \leq g + 1$.
		\item[(ii)] If $k = g + 1$ (M-curve), then $\Sigma$ is dividing. If $k = 0$, then $\Sigma$ is non-dividing.
		\item[(iii)] If $\Sigma$ is dividing, then $k \equiv g + 1 \pmod{2}$.
	\end{enumerate}
\end{proposition}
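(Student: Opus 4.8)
The plan is to reduce all three statements to a single Euler-characteristic computation for the quotient $\Sigma/\tau$, combined with the orientability dichotomy already established in Lemma~\ref{lem:12components}. First I would fix a $\tau$-invariant cell decomposition of $\Sigma$ in which $\fix(\tau)$ appears as a subcomplex; this is possible because $\fix(\tau)$ is a disjoint union of smooth simple closed curves by Corollary~\ref{cor:fixedpoint}. Cells off the fixed set occur in $\tau$-pairs, while the cells lying on $\fix(\tau)$ are fixed, so counting vertices, edges and faces upstairs and downstairs yields $\chi(\Sigma) = 2\chi(\Sigma/\tau) - \chi(\fix(\tau))$. Since $\fix(\tau)$ is a union of circles, $\chi(\fix(\tau)) = 0$, and with $\chi(\Sigma) = 2 - 2g$ this gives
\[
\chi(\Sigma/\tau) = 1 - g.
\]

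Next I would identify the topological type of the quotient. The projection $\pi : \Sigma \to \Sigma/\tau$ folds $\Sigma$ along the real ovals (locally modeled on $z \mapsto \bar z$, whose quotient is a half-disk), so $\Sigma/\tau$ is a compact surface whose boundary consists of exactly the $k$ images of the ovals. By Lemma~\ref{lem:12components}, $\Sigma/\tau$ is orientable precisely when $(\Sigma,\tau)$ is dividing and non-orientable when it is non-dividing. I would then substitute into the standard Euler-characteristic formulas for bordered surfaces. In the dividing case $\Sigma/\tau$ is an orientable surface of some genus $h \geq 0$ with $k$ boundary circles, so $\chi(\Sigma/\tau) = 2 - 2h - k$; equating with $1-g$ gives $2h = g+1-k$. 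Integrality and nonnegativity of $h$ force both $k \leq g+1$ and $k \equiv g+1 \pmod 2$, which is exactly (iii) together with the upper bound in this case. In the non-dividing case $\Sigma/\tau$ is non-orientable of non-orientable genus $c \geq 1$ with $k$ boundary circles, so $\chi(\Sigma/\tau) = 2 - c - k$; equating gives $c = g+1-k$, and $c \geq 1$ forces the sharper bound $k \leq g$. In either case $k \leq g+1$, which proves (i), the lower bound $k \geq 0$ being trivial.

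Finally I would assemble (ii) from the case analysis. If $k = g+1$, the non-dividing case is excluded (it requires $k \leq g$), so the surface must be dividing. If $k = 0$, then $\fix(\tau) = \emptyset$ and $\Sigma \setminus \fix(\tau) = \Sigma$ is connected, hence the surface is non-dividing. The step I expect to be the main obstacle is the first one: making the folding picture precise enough to guarantee that $\Sigma/\tau$ is genuinely a compact surface-with-boundary having exactly $k$ boundary components (rather than exhibiting corners or self-identifications along the ovals), and justifying the identity $\chi(\Sigma) = 2\chi(\Sigma/\tau)$ cleanly through the invariant cell structure. Once the quotient is correctly identified as a bordered surface of the type dictated by Lemma~\ref{lem:12components}, the remaining arithmetic is entirely routine.
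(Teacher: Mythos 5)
Your proposal is correct and follows essentially the same route as the paper: the doubling relation $\chi(\Sigma) = 2\chi(\Sigma/\tau)$ (your version with the $-\chi(\fix(\tau))$ correction term reduces to this since circles have vanishing Euler characteristic), the orientability dichotomy from Lemma~\ref{lem:12components}, and the same case-by-case arithmetic $k = g+1-2h$ (dividing) versus $k = g+1-c$ with $c \geq 1$ (non-dividing). Your explicit treatment of the invariant cell structure and the fold along the ovals is a slightly more careful justification of the step the paper handles with a brief lifting-of-triangulations argument, but the substance is identical.
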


\begin{proof}
	Let $\chi$ denote the Euler characteristic. Consider the quotient surface $\Sigma/\tau$. This is a surface with boundary consisting of $k$ boundary circles (the images of the real ovals). Let $g'$ be the genus of the closed surface obtained by capping these boundary circles with disks.
	
	The Euler characteristic of $\Sigma$ and $\Sigma/\tau$ satisfies the relation
	\begin{equation}\label{eq:Euler}
		\chi(\Sigma) = 2 \chi(\Sigma/\tau).
	\end{equation}
	This follows from considering a triangulation of $\Sigma/\tau$ where the boundary lies on edges. Lifting this triangulation to $\Sigma$ doubles the number of faces, edges, and vertices, except for those on the fixed point set (boundary), which are their own preimages. Since $\chi = V - E + F$, the contribution from the bulk doubles, while the boundary contribution cancels out in the relation relative to the double cover.
	
	Using $\chi(\Sigma) = 2 - 2g$, we analyze the two cases from Lemma~\ref{lem:12components}:
	
	\begin{enumerate}
		\item[(i)]
		\textbf{Case 1: $\Sigma$ is dividing.} Then $\Sigma/\tau$ is orientable. Its Euler characteristic is $\chi(\Sigma/\tau) = 2 - 2g' - k$. Substituting into \eqref{eq:Euler}:
		\[
		2 - 2g = 2(2 - 2g' - k) \quad \Rightarrow \quad 1 - g = 2 - 2g' - k \quad \Rightarrow \quad k = g + 1 - 2g'.
		\]
		Since $g' \geq 0$, we have $k \leq g + 1$.
		
		\textbf{Case 2: $\Sigma$ is non-dividing.} Then $\Sigma/\tau$ is non-orientable. Its Euler characteristic is $\chi(\Sigma/\tau) = 2 - g' - k$ (where $g'$ represents the number of cross-caps/genus of the non-orientable surface). Substituting into \eqref{eq:Euler}:
		\[
		2 - 2g = 2(2 - g' - k) \quad \Rightarrow \quad 1 - g = 2 - g' - k \quad \Rightarrow \quad k = g + 1 - g'.
		\]
		Since $g' \geq 1$ (a non-orientable surface must have at least one cross-cap), we strictly have $k \leq g$.
		
		Combining both cases, we obtain $k \leq g + 1$.
		
		\item[(ii)] If $k = g + 1$, the formula from Case 2 ($k = g + 1 - g'$) would imply $g' = 0$, which is impossible for a non-orientable surface. Thus, an M-curve must fall into Case 1 (dividing).
		If $k = 0$, the fixed point set is empty, so $\Sigma \setminus \fix(\tau) = \Sigma$ is connected (non-dividing).
		
		\item[(iii)] This follows directly from the derivation in Case 1 above: $k = (g+1) - 2g'$, hence $k$ and $g+1$ have the same parity.
	\end{enumerate}
\end{proof}

\subsection{Homology Basis}\label{sec:homology}

We now use the topological properties of a real Riemann surface, recalled in Section~\ref{sec:realovals}, to construct a homology basis adapted to $\tau$.

We start with the following algebraic proposition, following the approach of Silhol \cite{Sil98}.

\begin{proposition}\label{prop:freemodule}
	Let $N$ be a free $\Z$-module of rank $n$, and let $f$ be an involution on $N$. Then, there exists a basis $(a_1, \ldots, a_d, b_1, \ldots, b_{n-d})$ of $N$ such that
	\begin{align*}
		f(a_i) &= a_i \quad \text{for } 1 \leq i \leq d,\\
		f(b_i) &= \sum_{k=1}^d h_{ki} a_k - b_i \quad \text{for } 1 \leq i \leq n-d,
	\end{align*}
	where $h_{ki} \in \{0,1\}$ are the entries of some $d \times (n-d)$ matrix $H$.
\end{proposition}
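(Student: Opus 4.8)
The plan is to avoid trying to split $N = N^+ \oplus N^-$ into its $(\pm 1)$-eigenmodules integrally, since these span $N$ only up to a power-of-two index; instead I would work with a quotient. First I would set $N^+ \coloneqq \{x \in N : f(x) = x\}$. Because $N$ is torsion-free, $N^+$ is saturated: if $mx \in N^+$ with $m \neq 0$, then $m\bigl(f(x)-x\bigr)=0$ forces $f(x)=x$. A saturated submodule of a finitely generated free $\Z$-module is a direct summand, so $N^+$ is free of some rank $d$ and the quotient $W \coloneqq N/N^+$ is free of rank $n-d$. Choosing any basis $a_1,\dots,a_d$ of $N^+$ immediately gives $f(a_i)=a_i$, which supplies the first half of the desired basis.

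The key observation is that the involution induced by $f$ on $W$ is exactly $-\mathrm{id}$: for every $x \in N$ the element $x+f(x)$ is $f$-fixed, hence lies in $N^+$, so that $f(x) \equiv -x \pmod{N^+}$. I would therefore take any basis of $W$, lift it to elements $b_1,\dots,b_{n-d} \in N$, and note that $(a_1,\dots,a_d,b_1,\dots,b_{n-d})$ is then a basis of $N$ (a basis of the summand $N^+$ together with a lift of a basis of the free quotient $W$). Since $b_i + f(b_i) \in N^+$, I can write $b_i + f(b_i) = \sum_{k=1}^d c_{ki}\,a_k$, which yields $f(b_i) = \sum_{k} c_{ki}\,a_k - b_i$ for integers $c_{ki}$. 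This already reproduces the desired shape, with the sole defect that the $c_{ki}$ need not lie in $\{0,1\}$.

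The final, and only genuinely arithmetic, step normalizes these coefficients. Replacing $b_i$ by $b_i' \coloneqq b_i - \sum_k m_{ki}\,a_k$ for integers $m_{ki}$ is a unimodular change of basis that keeps every $a_k$ fixed, and it transforms the coupling by $b_i' + f(b_i') = \bigl(b_i + f(b_i)\bigr) - 2\sum_k m_{ki}\,a_k$, i.e.\ $c_{ki} \mapsto c_{ki} - 2m_{ki}$; the factor $2$ appears because the correction term enters once through $b_i'$ and once through $f(b_i') = f(b_i) - \sum_k m_{ki}\,a_k$. Hence each $c_{ki}$ may be reduced modulo $2$: writing $c_{ki} = 2m_{ki}+h_{ki}$ with $h_{ki}\in\{0,1\}$ and performing this substitution produces a basis with $f(a_i)=a_i$ and $f(b_i)=\sum_k h_{ki}\,a_k - b_i$, $h_{ki}\in\{0,1\}$, as claimed. (One checks in passing that $N^+$ has rank exactly $d$ in the final basis, since $f(v)=v$ forces the $b$-coordinates of $v$ to vanish, so $d$ is not an arbitrary choice but equals $\rk N^+$.)

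The main obstacle, in my view, is conceptual rather than computational: one must resist diagonalizing $f$ over $\Z$ directly, because the interaction between the $(+1)$- and $(-1)$-parts is precisely the content of the statement. Passing to the quotient $W$ and recognizing that $f$ acts there as $-\mathrm{id}$ circumvents this cleanly, and the parity reduction in the last step — made possible exactly by the factor of $2$ in $b_i+f(b_i)$ — is what forces the off-diagonal block $H$ to be a $\{0,1\}$-matrix rather than an arbitrary integer matrix. Everything else is bookkeeping about saturated submodules and unimodular base changes.
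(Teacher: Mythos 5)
Your proof is correct and follows essentially the same route as the paper: split off the fixed submodule $\ker(\mathrm{id}-f)$ as a direct summand, observe that $f$ induces $-\mathrm{id}$ on the quotient, lift a basis of the quotient, and reduce the resulting coupling matrix modulo $2$ by a unimodular change of basis that fixes the $a_k$. The only differences are presentational: you justify the direct-summand property via saturation (which the paper asserts without proof), derive the $-\mathrm{id}$ action from $x+f(x)\in N^+$ rather than from $f\circ(\mathrm{id}-f)=-(\mathrm{id}-f)$, and phrase the mod-$2$ reduction element-wise instead of as matrix conjugation.
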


\begin{proof}
	Let $\fix(N) \coloneq \{x \in N \mid f(x) = x\}$ denote the set of fixed points of $f$ in $N$. Clearly,
	\[
	\fix(N) = \{x \in N \mid x - f(x) = 0\} = \ker(one-f).
	\]
	Let $d$ denote the rank of $\ker(one-f)$. Since $\ker(1 - f)$ is a direct factor in $N$, we have the decomposition
	\[
	N \cong \ker(one-f) \oplus N/ \ker(one-f).
	\]
	Furthermore, since $f$ is an involution, $(f \circ (one-f))(x) = f(x) - x = -(x - f(x)) = -(one-f)(x)$. Thus, the map induced by $f$ on the quotient $N/\ker(one-f)$ is $- \mathrm{Id}$. Combining a basis of $\ker(one-f)$ with a lift of a basis of $N/ \ker(one-f)$ to form a basis of $N$, we obtain the following matrix representation of $f$:
	\[
	M =
	\begin{pmatrix}
		I_d & H \\
		0 & -I_{n-d}
	\end{pmatrix}
	\]
	for some $d \times (n-d)$ matrix $H$. Here, $I$ denotes the identity matrix. By changing the basis via matrices of the form
	\[
	\begin{pmatrix}
		I_d & B \\
		0 & I_{n-d}
	\end{pmatrix},
	\]
	we transform the representation matrix of $f$ to
	\[
	\begin{pmatrix}
		I_d & B \\
		0 & I_{n-d}
	\end{pmatrix}
	\begin{pmatrix}
		I_d & H \\
		0 & -I_{n-d}
	\end{pmatrix}
	\begin{pmatrix}
		I_d & -B \\
		0 & I_{n-d}
	\end{pmatrix}
	=
	\begin{pmatrix}
		I_d & -2B + H \\
		0 & -I_{n-d}
	\end{pmatrix}.
	\]
	By choosing the entries of $B$ as $b_{ij} = \lfloor \frac{h_{ij}}{2} \rfloor$, we can ensure that the entries of the transformed matrix (which we again call $H$) lie in $\{0,1\}$.
\end{proof}

The involution $\tau$ defines a map on homology
\[
\tau : H_1(\Sigma,\Z) \to H_1(\Sigma,\Z), \quad [\phi] \mapsto [\tau(\phi)],
\]
which, for simplicity, we also denote by $\tau$. The group $H_1(\Sigma,\Z)$ is a free module of rank $2g$. Applying Proposition~\ref{prop:freemodule} with $N = H_1(\Sigma,\Z)$ and $f = \tau$, we seek to characterize the rank of the submodule of fixed points and the rank of $H$.

\begin{proposition}\label{prop:fixedpointhomology}
	The fixed point set $\fix(H_1(\Sigma,\Z)) = \{ c \in H_1(\Sigma,\Z) \mid \tau(c) = c \}$ is a submodule of rank $g$.
\end{proposition}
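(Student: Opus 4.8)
The plan is to extend scalars to $\R$ and read off the rank of the fixed submodule from the $\pm1$-eigenspaces of $\tau$, using the intersection form to control their dimensions. Over $\R$ the space $H_1(\Sigma,\R)$ has dimension $2g$, and since $\tau$ is an involution it is diagonalizable with eigenvalues in $\{+1,-1\}$; write $V_+$ and $V_-$ for the corresponding eigenspaces, so that $H_1(\Sigma,\R)=V_+\oplus V_-$. Because $\mathrm{id}-\tau$ is represented by an integer matrix, the rank of its kernel is unchanged under extension of scalars, so the rank of $\fix(H_1(\Sigma,\Z))=\ker(\mathrm{id}-\tau)$ as a $\Z$-module equals $\dim_\R V_+$. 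It therefore suffices to show $\dim_\R V_+=g$; equivalently, in the notation of Proposition~\ref{prop:freemodule} with $N=H_1(\Sigma,\Z)$ and $f=\tau$, I must show that the fixed rank $d$ equals $g$.

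The geometric input I would exploit is that $\tau$, being antiholomorphic, is orientation-reversing on $\Sigma$. Hence $\tau$ acts as $-1$ on $H_2(\Sigma,\Z)\cong\Z$, and the intersection form $\langle\,\cdot\,,\,\cdot\,\rangle$ on $H_1(\Sigma,\Z)$ satisfies
\[
\langle \tau a,\tau b\rangle=-\langle a,b\rangle\qquad\text{for all }a,b\in H_1(\Sigma,\Z).
\]
This sign relation is the heart of the argument and the step I expect to be the main obstacle, since it is the only genuinely geometric ingredient: one must verify that orientation-reversal of $\tau$ transfers to a sign on the intersection pairing. I regard this as especially important for our purposes, because the intersection form is purely combinatorial on the quad-graph and so this relation is exactly what will later be transferable to the discrete setting.

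Given the sign relation, the conclusion is pure linear algebra. For $a,b\in V_+$ we compute $\langle a,b\rangle=\langle\tau a,\tau b\rangle=-\langle a,b\rangle$, so $\langle a,b\rangle=0$ and $V_+$ is isotropic; the same argument shows $V_-$ is isotropic. Since the intersection form is a nondegenerate alternating form on the $2g$-dimensional space $H_1(\Sigma,\R)$, every isotropic subspace has dimension at most $g$, whence $\dim_\R V_+\le g$ and $\dim_\R V_-\le g$. Combined with $\dim V_++\dim V_-=2g$ this forces $\dim_\R V_+=\dim_\R V_-=g$, and therefore $\fix(H_1(\Sigma,\Z))$ has rank $g$, as claimed. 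As an alternative that avoids the symplectic structure, one could instead compute $\operatorname{tr}(\tau\mid H_1)=\dim V_+-\dim V_-$ and identify it with the Lefschetz number, which equals the Euler characteristic of the fixed-point set; since $\fix(\tau)$ is a disjoint union of circles by Corollary~\ref{cor:fixedpoint}, this Euler characteristic vanishes, again yielding $\dim V_+=\dim V_-=g$.
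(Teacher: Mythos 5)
Your proof is correct and follows essentially the same route as the paper: both extend scalars (the paper uses $\Q$, you use $\R$, which is immaterial), decompose $H_1$ into the $\pm 1$-eigenspaces of $\tau$, use the antisymplectic relation $\mathrm{int}(\tau a,\tau b)=-\mathrm{int}(a,b)$ coming from orientation-reversal to show both eigenspaces are isotropic, and conclude from nondegeneracy of the intersection form that each has dimension exactly $g$. The Lefschetz-number argument you sketch at the end is a legitimate independent alternative (noting that it also covers the case $\fix(\tau)=\emptyset$, where the Euler characteristic of the empty set is $0$), but your main argument coincides with the paper's.
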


\begin{proof}
	Consider the $\Q$-vector space $V \coloneq \Q \otimes_{\Z} H_1(\Sigma,\Z)$. The intersection pairing defines a symplectic form on $V$. Since $\tau$ is an involution, its eigenvalues are $+1$ and $-1$, and the minimal polynomial divides $x^2 - 1 = (x-1)(x+1)$. It follows that $\tau$ is diagonalizable and $V$ decomposes as $V = E_+ \oplus E_-$, where $E_+$ and $E_-$ denote the eigenspaces for the eigenvalues $+1$ and $-1$, respectively. In particular, $E_+$ corresponds to the set of fixed points of $\tau$.
	
	The involution $\tau$ reverses orientation, and thus changes the sign of the intersection number:
	\[
	\mathrm{int}(\tau(v), \tau(w)) = - \mathrm{int}(v,w)
	\]
	for all $v, w \in V$. This means $\tau$ is antisymplectic. For any $v_1, v_2 \in E_+$, we have:
	\[
	\mathrm{int}(v_1,v_2) = - \mathrm{int}(\tau(v_1),\tau(v_2)) = - \mathrm{int}(v_1,v_2),
	\]
	which implies $\mathrm{int}(v_1,v_2) = 0$. Hence, $E_+$ is an isotropic subspace, meaning $E_+ \subset (E_+)^\perp = \{ v \in V \mid \mathrm{int}(v,w) = 0 \ \forall w \in E_+ \}$. The same holds for $E_-$. Since the symplectic form is non-degenerate, the dimension of an isotropic subspace cannot exceed half the dimension of the total space $V$. Since $V = E_+ \oplus E_-$ and $\dim(V) = 2g$, both subspaces must have dimension exactly $g$. We conclude that the rank of $\fix(H_1(\Sigma,\Z))$ equals $\dim(E_+) = g$.
\end{proof}

This decomposition leads to the following immediate consequence:

\begin{corollary}\label{cor:homologybasis1}
	There exists a basis $(a_1, \ldots, a_g, b_1, \ldots, b_g)$ of $H_1(\Sigma,\Z)$ such that $\tau(a_i) = a_i$ for $i = 1, \ldots, g$.
\end{corollary}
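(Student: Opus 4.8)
The plan is to combine Proposition~\ref{prop:fixedpointhomology} with the general algebraic Proposition~\ref{prop:freemodule}, applied to the free $\Z$-module $N = H_1(\Sigma,\Z)$ with the involution $f = \tau$. Proposition~\ref{prop:freemodule} guarantees the existence of a basis $(a_1,\ldots,a_d,b_1,\ldots,b_{n-d})$ on which $\tau$ acts so that the $a_i$ are fixed and the $b_i$ satisfy $\tau(b_i) = \sum_k h_{ki} a_k - b_i$. Here $n = 2g$, and the integer $d$ produced by that proposition is by construction the rank of $\ker(\mathrm{one} - \tau)$, which is precisely the rank of the fixed submodule $\fix(H_1(\Sigma,\Z))$.

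The key step is then to identify $d$ with $g$. This is exactly the content of Proposition~\ref{prop:fixedpointhomology}, which shows that $\fix(H_1(\Sigma,\Z))$ has rank $g$. Substituting $d = g$ into the output of Proposition~\ref{prop:freemodule} yields a basis $(a_1,\ldots,a_g,b_1,\ldots,b_g)$ of $H_1(\Sigma,\Z)$ with $\tau(a_i) = a_i$ for $i = 1,\ldots,g$, which is the claimed statement. The corollary therefore follows immediately by chaining the two preceding results, with no further argument required.

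I expect no genuine obstacle here, since the corollary is a direct specialization; the only point deserving care is the bookkeeping that the rank parameter $d$ appearing in the decomposition $N \cong \ker(\mathrm{one}-f) \oplus N/\ker(\mathrm{one}-f)$ of Proposition~\ref{prop:freemodule} coincides with the rank of the fixed submodule computed rationally in Proposition~\ref{prop:fixedpointhomology}. This is immediate because tensoring with $\Q$ is exact, so $\dim_\Q E_+ = \dim_\Q(\Q \otimes \ker(\mathrm{one}-\tau)) = \operatorname{rank}_\Z \ker(\mathrm{one}-\tau) = d$, and Proposition~\ref{prop:fixedpointhomology} gives $\dim_\Q E_+ = g$. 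Note that the corollary as stated records only the fixed vectors $a_i$; the precise action of $\tau$ on the $b_i$, encoded by the matrix $H$, is retained for later use but is not needed for this weaker statement.
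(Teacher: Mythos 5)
Your proposal is correct and matches the paper's reasoning exactly: the paper presents this corollary as an immediate consequence of combining Proposition~\ref{prop:freemodule} (applied to $N = H_1(\Sigma,\Z)$, $f = \tau$) with Proposition~\ref{prop:fixedpointhomology}, which pins down $d = g$. Your extra remark that flatness of $\Q$ over $\Z$ identifies $\operatorname{rank}_{\Z}\ker(\mathrm{one}-\tau)$ with $\dim_\Q E_+$ is a careful justification of a step the paper leaves implicit, and it is sound.
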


For our purposes, we require a basis that is also \emph{symplectic}. Consider $V_2 \coloneq H_1(\Sigma,\Z_2)$. Equipped with the intersection number mod 2, $V_2$ is a symplectic $\Z_2$-vector space. Furthermore,
\[
\mathrm{int}(\phi,\psi) \equiv \mathrm{int}(\tau(\phi),\tau(\psi)) \pmod 2
\]
holds, meaning that $\tau$ acts symplectically on $V_2$. Using the basis form from Proposition~\ref{prop:freemodule} (where $d=g$), the representation matrix is
\[
M = \begin{pmatrix}
	I_g & H \\
	0 & I_g
\end{pmatrix}.
\]
Since $M$ must preserve the symplectic form $J = \begin{psmallmatrix} 0 & I_g \\ I_g & 0 \end{psmallmatrix}$, we compute:
\[
\begin{pmatrix}
	I_g & 0 \\
	H^T & I_g
\end{pmatrix}
\begin{pmatrix}
	0 & I_g \\
	I_g & 0
\end{pmatrix}
\begin{pmatrix}
	I_g & H \\
	0 & I_g
\end{pmatrix}
=
\begin{pmatrix}
	0 & I_g \\
	I_g & 0
\end{pmatrix}.
\]
The left hand side evaluates to:
\[
\begin{pmatrix}
	0 & I_g \\
	I_g & H^T
\end{pmatrix}
\begin{pmatrix}
	I_g & H \\
	0 & I_g
\end{pmatrix}
=
\begin{pmatrix}
	0 & I_g \\
	I_g & H^T + H
\end{pmatrix}.
\]
Comparison implies $H^T + H = 0$, so $H = H^T$ (in $\Z_2$, antisymmetry is equivalent to symmetry). Thus, $H$ is symmetric. Any symmetric matrix over $\Z_2$ is characterized by its rank and its diagonal (see Corollary~\ref{cor:characterization} in Appendix~\ref{sec:bilinearforms}). We define $\mathrm{diag}(H)$ by:
\[
\mathrm{diag}(H) \coloneq
\begin{cases}
	0, & \text{if } H_{ii} = 0 \ \forall i=1,\dots,g,\\
	1, & \text{otherwise}.
\end{cases}
\]

\begin{remark}[Discrete preview]
	In the discrete theory, we consider homology on the \emph{medial graph}, which is constructed by connecting the midpoints of edges of the quad-graph (see Section~\ref{sec:Riemann}). The homology classes $a_i, b_i$ will be represented by cycles on this graph. Consequently, the intersection number $\mathrm{int}(\cdot, \cdot)$ corresponds to the algebraic sum of crossings of these paths. The explicit geometric construction of the basis below serves as the direct template for the discretization.
\end{remark}

We now provide the complete classification of the representation matrix of $\tau$ in homology, depending on the type of real Riemann surface. While the result is known (e.g., \cite{Vin93}), we follow the constructive ideas of Weichold \cite{Wei83} to ensure the method is transferable to the discrete setting.

\begin{proposition}\label{prop:realhomology}
	Let $(\Sigma,\tau)$ be a real Riemann surface of genus $g$ with $k$ real ovals. Then, there exists a symplectic basis $(a_1, \ldots, a_g, b_1, \ldots, b_g)$ of $H_1(\Sigma,\Z)$ such that
	\begin{align*}
		\tau(a_i) &= a_i \quad \text{for } 1 \leq i \leq g,\\
		\tau(b_i) &= \sum_{j=1}^g h_{ji} a_j - b_i \quad \text{for } 1 \leq i \leq g,
	\end{align*}
	where $H = (h_{ij})$ is a symmetric matrix with entries in $\{0,1\}$. Furthermore:
	\begin{enumerate}
		\item[(i)] If $\Sigma$ is dividing, then $\mathrm{diag}(H) = 0$ and $\mathrm{rank}(H) = g + 1 - k$.
		\item[(ii)] If $\Sigma$ is non-dividing and $k \neq 0$, then $\mathrm{diag}(H) = 1$ and $\mathrm{rank}(H) = g + 1 - k$.
		\item[(iii)] If $\Sigma$ is non-dividing and $k = 0$, then $\mathrm{diag}(H) = 0$ and
		\[
		\mathrm{rank}(H) =
		\begin{cases}
			g & \text{if } g \equiv 0 \pmod 2, \\
			g - 1 & \text{if } g \equiv 1 \pmod 2.
		\end{cases}
		\]
	\end{enumerate}
\end{proposition}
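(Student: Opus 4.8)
The plan is to construct the symplectic basis explicitly from the geometry of the real ovals and the regions they bound, treating the dividing and non-dividing cases separately but in parallel. First I would fix the $k$ real ovals $x_1,\dots,x_k$ (as homology classes, oriented consistently) and observe that each oval is fixed by $\tau$ setwise; since $\tau$ reverses orientation, it acts on each oval class $[x_i]$ by fixing or negating it depending on orientation conventions, so these supply the beginnings of the $+1$-eigenspace. The key structural input is Proposition~\ref{prop:fixedpointhomology}, which guarantees that the fixed submodule has rank exactly $g$, together with Corollary~\ref{cor:homologybasis1} and the symmetry of $H$ established above. The explicit construction below then pins down $\mathrm{diag}(H)$ and $\mathrm{rank}(H)$ in terms of the topological data $(g,k,\text{type})$.

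For the \textbf{dividing case}, I would use that $\Sigma \setminus \fix(\tau)$ splits into two halves $R_1, R_2 = \tau(R_1)$, each a surface of genus $g'$ with $k$ boundary circles, where by Proposition~\ref{prop:number_ovals}(i) we have $k = g+1-2g'$. The standard handle basis $(m_j, n_j)_{j=1}^{g'}$ of $R_1$ can be pushed by $\tau$ into $R_2$; setting $a_j$ to be symmetric combinations like $m_j + \tau(m_j)$ (fixed by $\tau$) and $b_j$ their symplectic duals yields $\tau(b_j) = -b_j$ plus possible $a$-terms, giving a block of $H$. The ovals themselves contribute $k-1$ independent cycles in $R_1$ (one relation since they jointly bound), producing $a$-classes fixed by $\tau$ with dual $b$-classes crossing between $R_1$ and $R_2$; crossing a single oval once, one finds $\tau(b) = -b + a$ with the relevant diagonal entry vanishing because the crossing cycle meets its own image only along the fixed locus. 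A careful count gives $\mathrm{rank}(H) = (k-1) = g+1-k$ after accounting for the $2g'$ handle-dual classes contributing zero to $H$; I would reconcile this with $k = g+1-2g'$ to confirm $\mathrm{rank}(H) = g+1-k$ and verify $\mathrm{diag}(H)=0$ since each crossing cycle is disjoint from itself off the fixed set.

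For the \textbf{non-dividing case} the quotient $\Sigma/\tau$ is non-orientable with $g'$ cross-caps and $k$ boundary circles, and $k = g+1-g'$. When $k \neq 0$ the orientation-reversing loop through a cross-cap produces a class $b$ with $\tau(b) = -b + a$ where now the self-intersection forces $H_{ii}=1$ for at least one $i$, giving $\mathrm{diag}(H)=1$; the rank count proceeds as before to yield $\mathrm{rank}(H)=g+1-k$. The subcase $k=0$ is the delicate one: with no fixed ovals, $\tau$ is a fixed-point-free orientation-reversing involution, and I would invoke the normal form of such involutions (equivalently the classification of the corresponding $\Z_2$-quadratic form via $\mathrm{diag}$ and rank from Corollary~\ref{cor:characterization}) to obtain $\mathrm{diag}(H)=0$ with rank $g$ or $g-1$ according to the parity of $g$.

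The main obstacle I anticipate is the $k=0$ non-dividing subcase, where there is no geometric oval to anchor the construction and one must instead analyze the abstract $\Z_2$-bilinear form associated to $H$ directly: the parity split between $g$ even and $g$ odd comes from whether the symmetric form of type $\mathrm{diag}=0$ can be non-degenerate, which is governed by the Arf-invariant/rank classification in Appendix~\ref{sec:bilinearforms}. I would therefore lean on Corollary~\ref{cor:characterization} to translate the topological constraint ($\Sigma/\tau$ non-orientable, no boundary) into the precise rank, rather than attempting a fully explicit cycle-by-cycle basis there. The secondary difficulty is bookkeeping the $2g'$ handle classes so that they contribute trivially to $H$ while remaining symplectically paired, which requires choosing the symmetrized/antisymmetrized combinations carefully so that intersection numbers come out as the standard symplectic form.
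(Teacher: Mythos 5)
Your proposal has two genuine problems, one in the dividing case and one in the $k=0$ case.

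In the dividing case your bookkeeping is exactly backwards, and the resulting count does not close. The cycles dual to the ovals are the $c_i = (\tau(y_i))^{-1}\circ y_i$, which cross from $S_1$ to $S_2$; these satisfy $\tau([c_i]) = -[c_i]$ \emph{with no $a$-term at all}, so they contribute zero columns to $H$ -- not, as you write, $\tau(b)=-b+a$. Conversely, the handle classes are what produce the rank: with $a'_i = [m_i]+\tau([m_i])$, $b'_i=[n_i]$, $a''_i=[n_i]+\tau([n_i])$, $b''_i=\tau([m_i])$, one gets $\tau(b'_i)=a''_i-b'_i$ and $\tau(b''_i)=a'_i-b''_i$, so each handle of $S_1$ contributes a block $\begin{psmallmatrix}0&1\\1&0\end{psmallmatrix}$ to $H$, and $\mathrm{rank}(H)=2g'=g+1-k$. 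Your claimed identity $\mathrm{rank}(H)=(k-1)=g+1-k$ is arithmetically false in general: $k-1=g+1-k$ forces $2k=g+2$ (for instance $g=6$, $k=3$ gives $k-1=2$ but $g+1-k=4$). Since the rank of $H$ is an invariant of the involution, no choice of basis can realize your count; the construction must be repaired as above. This also undermines your non-dividing $k\neq 0$ case, where you defer to ``the rank count proceeds as before'': there the correct count is $(l-k+1)$ diagonal ones from lifted orientation-reversing loops plus $2g'$ from the handle blocks, totalling $g+1-k$.

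In the $k=0$ case there is a missing idea rather than a wrong computation. Corollary~\ref{cor:characterization} says only that a symmetric $\Z_2$-matrix is determined up to congruence by its rank and diagonal; it cannot tell you what $\mathrm{rank}(H)$ \emph{is} for a given fixed-point-free involution. The parity observation you make (with $\mathrm{diag}(H)=0$ the rank is even, hence at most $g$ for $g$ even and at most $g-1$ for $g$ odd) gives only an upper bound -- nothing in your argument excludes, say, $\mathrm{rank}(H)=g-2$ for even $g$. Pinning down the exact rank requires topological input: one constructs auxiliary curves $r_0,\dots,r_{l-1}$ by lifting orientation-reversing loops until they divide $\Sigma$, uses the boundary relation $[r_0]=\sum_{i=1}^{l-1}[r_i]$ to get $\tau(b_i)=\sum_{j\neq i}a_j-b_i$, and then computes the rank of the resulting dense $(l-1)\times(l-1)$ block (zeros on the diagonal, ones elsewhere) modulo $2$, which is where the parity dichotomy $g$ versus $g-1$ actually comes from. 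Also note the appendix classifies symmetric \emph{bilinear} forms; no Arf invariant of a quadratic form is defined there, so that appeal has no referent in this paper.
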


\begin{remark}
	According to Proposition~\ref{prop:number_ovals}, $0 \leq k \leq g+1$, so $\mathrm{rank}(H)\geq 0$. If $\Sigma$ is dividing, then $k \neq 0$, ensuring $\mathrm{rank}(H)\leq g$.
	Asking for a symplectic basis (rather than just any basis) requires careful choices of orientations and explicit calculation of intersection numbers, which we perform below.
\end{remark}

\begin{proof}
	(i) \textbf{Dividing Type.} Let $x_0, \dots, x_{k-1}$ denote the real ovals. We denote homology classes by square brackets, e.g., $[x_0]$. Since $\Sigma$ is dividing, $\Sigma \setminus \fix(\tau)$ consists of two connected components $S_1$ and $S_2$, with $\tau(S_1)=S_2$ (see Figure~\ref{fig:dividing6}).
	
	Pick a base point $P_i$ on each $x_i$ ($i = 0, \dots, k-1$). Construct paths $y_i$ from $P_0$ to $P_i$ lying entirely inside $S_1$, avoiding all other ovals and not encircling any handles of $S_1$. Then $\tau(y_i)$ lies in $S_2$ and connects $P_0$ to $P_i$. Set
	\[
	c_i \coloneq (\tau(y_i))^{-1} \circ y_i.
	\]
	Observe that $\tau(c_i) = (c_i)^{-1}$. Since $\tau$ is antisymplectic, \[\mathrm{int}([c_i], [c_j]) = - \mathrm{int}(\tau([c_i]), \tau([c_j])) = - \mathrm{int}(-[c_i], -[c_j]) = - \mathrm{int}([c_i], [c_j]).\] We can orient the ovals $x_0, \dots, x_{k-1}$ such that
	\[
	\mathrm{int}([x_i], [x_j]) = 0 = \mathrm{int}([c_i], [c_j]), \quad
	\mathrm{int}([x_i], [c_j]) = \delta_{ij}.
	\]

%----------------------------------------------------------------	
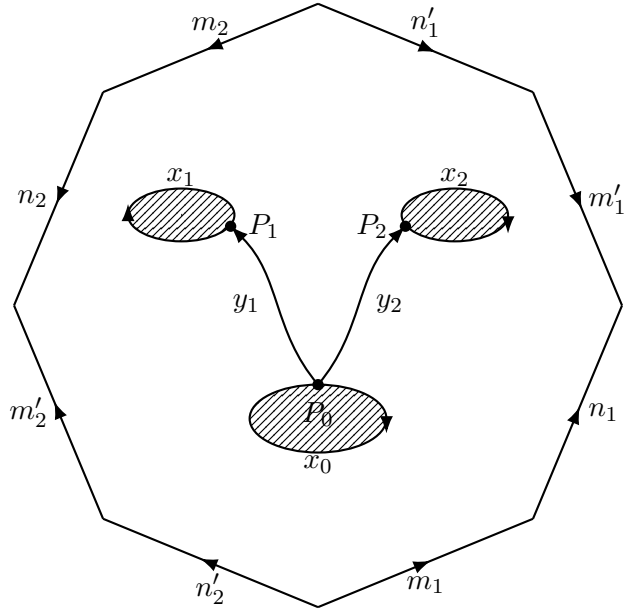
\begin{figure}[htbp]
	\centering
	\begin{tikzpicture}[
		scale=1.0,
		>=Latex,
		thick,
		midarrow/.style={decoration={markings, mark=at position 0.55 with {\arrow{>}}}, postaction={decorate}},
		revmidarrow/.style={decoration={markings, mark=at position 0.55 with {\arrow{<}}}, postaction={decorate}},
		hole/.style={fill=white, pattern=north east lines, pattern color=black},
		pathLine/.style={thick, ->},
		point/.style={fill=black, circle, inner sep=1.5pt}
		]
		\def\R{4.0}
		\foreach \i in {1,...,8} {
			\coordinate (V\i) at ({90 - (\i-1)*360/8}:\R);
		}
		
		\draw[midarrow] (V1) -- (V2) node[midway, above] {$n'_1$};
		\draw[midarrow] (V2) -- (V3) node[midway, right] {$m'_1$};
		\draw[revmidarrow] (V3) -- (V4) node[midway, right] {$n_1$};
		\draw[revmidarrow] (V4) -- (V5) node[midway, below, yshift=-1pt] {$m_1$};
		
		\draw[midarrow] (V5) -- (V6) node[midway, below] {$n'_2$};
		\draw[midarrow] (V6) -- (V7) node[midway, left] {$m'_2$};
		\draw[revmidarrow] (V7) -- (V8) node[midway, left] {$n_2$};
		\draw[revmidarrow] (V8) -- (V1) node[midway, above, yshift=1pt] {$m_2$};
		
\coordinate (C0) at (0, -1.5);
\draw[hole] ($(C0)+(0.9,0)$) arc (0:360:0.9 and 0.45);
\node[below, yshift=-10pt] at (C0) {$x_0$};
\draw[->] (0.9,-1.6)--(0.9,-1.7);

\coordinate (C1) at (-1.8, 1.2);
\draw[hole] ($(C1)+(0.7,0)$) arc (0:360:0.7 and 0.35);
\node[above, yshift=7pt] at (C1) {$x_1$};
\draw[->] (C1)--(C1)+(-0.7,0.15);

\coordinate (C2) at (1.8, 1.2);
\draw[hole] ($(C2)+(0.7,0)$) arc (0:360:0.7 and 0.35);
\node[above, yshift=7pt] at (C2) {$x_2$};
\draw[->] (2.5,1.0)--(2.5,0.95);
		
		\coordinate (P0) at ($(C0)+(0, 0.45)$);
		\node[point, label={below:$P_0$}] at (P0) {};
		
		\coordinate (P1) at ($(C1)+(0.65, -0.15)$);
		\node[point, label={right:$P_1$}] at (P1) {};
		
		\coordinate (P2) at ($(C2)+(-0.65, -0.15)$);
		\node[point, label={left:$P_2$}] at (P2) {};
		
		\draw[pathLine] (P0) to[out=130, in=-45] node[midway, left, xshift=-2pt] {$y_1$} (P1);
		
		\draw[pathLine] (P0) to[out=50, in=225] node[midway, right, xshift=2pt] {$y_2$} (P2);
		
	\end{tikzpicture}
	\caption{Fundamental $4g^{\prime}$-gon $F_{g^{\prime}}$ of the component $S_1$ (here $g^{\prime}=2$) bounded by real ovals $x_0, x_1, x_2$. The paths $y_i$ connect the base point $P_0$ on $x_0$ to $P_i$ on $x_i$.}
	\label{fig:fundamental}
\end{figure}
%----------------------------------------------------------------
	
	Let $g^{\prime}$ be the genus of $S_1$. Consider a fundamental $4g^{\prime}$-gon representation of $S_1$ with the real ovals in its interior (Figure~\ref{fig:fundamental}). Let the canonical edges be $m_1, \dots, m_{g^{\prime}}$ and $n_1, \dots, n_{g^{\prime}}$, satisfying
	\[
	\mathrm{int}([n_i], [n_j]) = 0 = \mathrm{int}([m_i], [m_j]), \quad
	\mathrm{int}([m_i], [n_j]) = \delta_{ij}.
	\]
	These curves lie in $S_1$ and avoid both the ovals and the curves $c_i$. Their images $\tau(m_i), \tau(n_i)$ lie in $S_2$. Define:
	\begin{align*}
		a^{\prime}_i &\coloneq [m_i] + \tau([m_i]), & b^{\prime}_i &\coloneq [n_i],\\
		a^{\prime \prime}_i &\coloneq [n_i] + \tau([n_i]), & b^{\prime \prime}_i &\coloneq \tau([m_i]).
	\end{align*}
	
	We claim that the ordered set
	\[
	\left( a^{\prime}_1, a^{\prime \prime}_1, \ldots, a^{\prime}_{g^{\prime}}, a^{\prime \prime}_{g^{\prime}}, [x_1], \dots, [x_{k-1}], b^{\prime}_1, b^{\prime \prime}_1, \ldots, b^{\prime}_{g^{\prime}}, b^{\prime \prime}_{g^{\prime}}, [c_1], \dots, [c_{k-1}] \right)
	\]
	forms a symplectic basis of $H_1(\Sigma, \Z)$. From the proof of Proposition~\ref{prop:number_ovals}~(i), we know $g = 2g' + k - 1$, so the number of elements ($2(2g' + k - 1) = 2g$) is correct. We check the intersection numbers.
	
	First, $\tau(a^{\prime}_i)=a^{\prime}_i$ and $\tau(a^{\prime\prime}_i)=a^{\prime\prime}_i$. Since $\tau$ is antisymplectic on $E_+$,
	\[
	\mathrm{int}(a^{\prime}_i, a^{\prime}_j) = \mathrm{int}(a^{\prime\prime}_i, a^{\prime\prime}_j) = \mathrm{int}(a^{\prime}_i, a^{\prime\prime}_j) = 0.
	\]
	Since $n_i \subset S_1$ and $\tau(m_j) \subset S_2$ are disjoint, and the canonical curves in $S_1$ satisfy standard relations,
	\[
	\mathrm{int}(b^{\prime}_i, b^{\prime}_j) = \mathrm{int}(b^{\prime\prime}_i, b^{\prime\prime}_j) = \mathrm{int}(b^{\prime}_i, b^{\prime\prime}_j) = 0.
	\]
	The remaining cross-intersections are:
	\begin{align*}
		\mathrm{int}(a^{\prime}_i, b^{\prime}_j) &= \mathrm{int}([m_i]+\tau([m_i]), [n_j]) = \mathrm{int}([m_i], [n_j]) = \delta_{ij},\\
		\mathrm{int}(a^{\prime}_i, b^{\prime \prime}_j) &= \mathrm{int}([m_i]+\tau([m_i]), \tau([m_j])) = \mathrm{int}(\tau([m_i]),\tau([m_j])) = 0, \\
		\mathrm{int}(a^{\prime\prime}_i, b^{\prime}_j) &= \mathrm{int}([n_i]+\tau([n_i]), [n_j]) = \mathrm{int}([n_i], [n_j]) = 0,\\
		\mathrm{int}(a^{\prime\prime}_i, b^{\prime\prime}_j) &= \mathrm{int}([n_i]+\tau([n_i]), \tau([m_j])) = \mathrm{int}(\tau([n_i]), \tau([m_j])) = -\mathrm{int}([n_i], [m_j]) = \delta_{ij}.
	\end{align*}
	
	The action of $\tau$ on this basis is:
	\begin{align*}
		\tau(a^{\prime}_i) &= a^{\prime}_i, & \tau(a^{\prime\prime}_i) &= a^{\prime\prime}_i, & \tau([x_i]) &= [x_i], \\
		\tau(b^{\prime}_i) &= \tau([n_i]) = a^{\prime\prime}_i - b^{\prime}_i, & \tau(b^{\prime\prime}_i) &= [m_i] = a^{\prime}_i - b^{\prime\prime}_i, & \tau([c_j]) &= -[c_j].
	\end{align*}
	
	The matrix $H$ has $g'$ blocks of $\begin{psmallmatrix} 0 & 1 \\ 1 & 0 \end{psmallmatrix}$ in the diagonal, followed by zeros. Thus, $\mathrm{diag}(H) = 0$ and $\mathrm{rank}(H) = 2g^{\prime} = g + 1 - k$.
	
(ii) \textbf{Non-Dividing Type ($k \neq 0$).} The ovals do not separate $\Sigma$. Following \cite{Kra82}, we construct additional cuts. Since $\Sigma/\tau$ is non-orientable, choose an orientation-reversing loop $\gamma$ on the quotient. Its lift is a curve $r_k = \tau(\gamma) \circ \gamma$ from $P$ to $\tau(P)$, so $\tau([r_k]) = [r_k]$.
Iterate this construction with curves $r_k, \dots, r_l$ until $x_0, \ldots, x_{k-1}, r_k, \ldots, r_l$ divide $\Sigma$ into two components $S_1$ and $S_2$.
Since these $l+1$ curves form the boundary of $S_1$ (which has genus $g'$), the Euler characteristic implies $g = 2g' + l$. Since $g' \geq 0$, it follows that $l \leq g$.

Construct paths $y_i$ from $P_0$ to $P_i$ (where $P_i \in x_i$ or $P_i \in r_i$) inside $S_1$.
For real ovals ($i < k$), set $c_i \coloneq \tau(y_i) \circ y_i$. Orient them such that $\mathrm{int}([x_i], [c_j]) = \delta_{ij}$.
For the auxiliary curves $r_i$ ($i \ge k$), define $c_i \coloneq (\tau(y_i))^{-1} \circ \tilde{r}_i \circ y_i$, where $\tilde{r}_i$ is the directed arc of the curve $r_i$ connecting $P_i$ to $\tau(P_i)$. The path $\tau(c_i)$ starts at $P_0$, goes to $\tau(P_i)$, follows $r_i$ to $P_i$, and returns to $P_0$. We can deform the paths such that $c_i$ and $\tau(c_i)$ intersect only at $P_0$ and $P_i$. Crucially, for these auxiliary curves, the geometry forces non-zero intersection with their image (see Figure~\ref{fig:non-dividing}). Orient $r_i$ such that $\mathrm{int}([\tau(c_j)],[c_i]) = \delta_{ij}$.

\begin{figure}[htbp]
	\centering
	\subfloat[Genus 1 example.]{\includegraphics[width=0.45\textwidth]{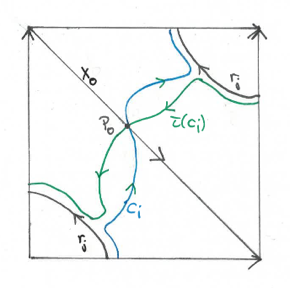}}
	\hfill
	\subfloat[General construction.]{\includegraphics[width=0.45\textwidth]{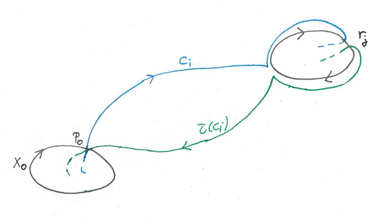}}
	\caption{Construction of curves $c_i$ (blue) and $\tau(c_i)$ (green) for a non-dividing surface.}
	\label{fig:non-dividing}
\end{figure}

Using the fundamental polygon of $S_1$ to define $m_i, n_i$ as before, we define the additional basis elements:
\begin{align*}
	a_i &\coloneq [c_{i+k-1}] + \tau([c_{i+k-1}]), & b_i &\coloneq [c_{i+k-1}], & i &= 1, \ldots, l-k+1;\\
	a^{\prime}_i &\coloneq [m_i] + \tau([m_i]), & b^{\prime}_i &\coloneq [m_i], & i &= 1, \ldots, g';\\
	a^{\prime\prime}_i &\coloneq [n_i] + \tau([n_i]), & b^{\prime\prime}_i &\coloneq [n_i], & i &= 1, \ldots, g'.
\end{align*}

We claim that the ordered set
\begin{eqnarray*}
	\left(
	a_1, \ldots, a_{l-k+1}, 
	a^{\prime}_1, a^{\prime \prime}_1, \ldots, a^{\prime}_{g^{\prime}}, a^{\prime \prime}_{g^{\prime}}, 
	[x_1], \dots, [x_{k-1}], 
	b_1, \ldots, b_{l-k+1}, 
	b^{\prime}_1, b^{\prime \prime}_1, \ldots, b^{\prime}_{g^{\prime}}, b^{\prime \prime}_{g^{\prime}}, 
	[c_1], \dots, [c_{k-1}]
	\right)
\end{eqnarray*}
forms a symplectic basis of $H_1(\Sigma, \Z)$. The intersection numbers for $a'_i, a''_i, b'_i, b''_i$ and the ovals $[x_n]$ are as in case (i). By construction, the paths $y_i$ are chosen to be disjoint from each other and from the previously defined curves. Consequently, the cycles $[c_{i+k-1}]$ do not intersect $[c_{j+k-1}]$ for $i \neq j$, nor do they intersect the interior cycles of $S_1$ defined by the fundamental polygon. For the new elements $a_i, b_i$:
\begin{align*}
	\mathrm{int}(a_i, b_j) &= \mathrm{int}(\tau([c_{i+k-1}]), [c_{j+k-1}]) = \delta_{ij}, \\
	\mathrm{int}(a_i, a_j) &= \mathrm{int}([c_{i+k-1}] + \tau([c_{i+k-1}]), [c_{j+k-1}] + \tau([c_{j+k-1}])) \\
	&= \mathrm{int}([c_{i+k-1}], \tau([c_{j+k-1}])) + \mathrm{int}(\tau([c_{i+k-1}]), [c_{j+k-1}]) \\
	&= -\delta_{ji} + \delta_{ij} = 0, \\
	\mathrm{int}(b_i, b_j) &= \mathrm{int}([c_{i+k-1}], [c_{j+k-1}]) = 0.
\end{align*}
The action of $\tau$ on the basis elements inherited from case (i) remains unchanged. For the newly added elements, we have $\tau(a_i) = a_i$ and $\tau(b_i) = a_i - b_i$. Consequently, the matrix $H$ has $l-k+1$ ones on the diagonal followed by $g'$ blocks of $\begin{psmallmatrix} 0 & 1 \\ 1 & 0 \end{psmallmatrix}$. Thus, $\mathrm{diag}(H) = 1$ and $\mathrm{rank}(H) = (l-k+1) + 2g' = g + 1 - k$.

	(iii) \textbf{Non-Dividing Type ($k = 0$).} We use auxiliary curves $r_0, \ldots, r_{l-1}$ to divide $\Sigma$. These are constructed iteratively exactly as in case (ii) by lifting orientation-reversing loops from the quotient surface. Since there are no real ovals, these curves form the sole boundary of the components $S_1$ and $S_2$, implying the homology relation $[r_0] = \sum_{i=1}^{l-1} [r_i]$ (with appropriate orientation).
	
	We construct $c_i$ via paths $y_i$ from $P_0 \in r_0$ to $P_i \in r_i$ as in (ii), defined by $c_i \coloneq (\tau(y_i))^{-1} \circ \tilde{r}_i \circ y_i$. The image cycle $\tau(c_i)$ geometrically traces the same path segments. However, by slightly deforming $\tau(c_i)$ -- specifically, by shifting the path along the boundary curves $r_0$ and $r_i$ so that it intersects $c_i$ transversely only at the base points $P_0$ and $P_i$ with opposite signs -- we ensure that the algebraic intersection number vanishes: $\mathrm{int}([c_i], \tau([c_i])) = 0$ (see Figure~\ref{fig:non_dividing_c}).
	
	\begin{figure}[htbp]
		\centering
		\includegraphics[width=0.5\textwidth]{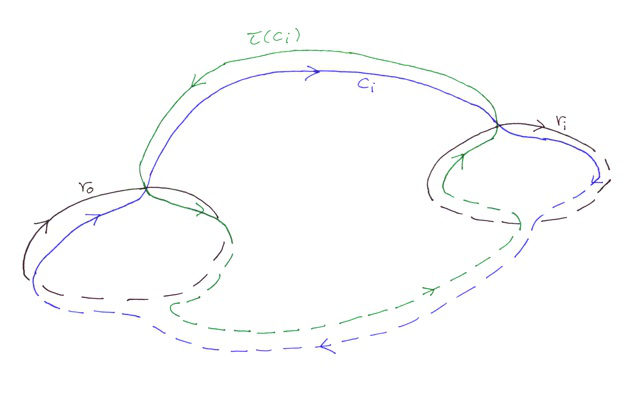}
		\caption{Construction of the homology basis for the case $k = 0$.}
		\label{fig:non_dividing_c}
	\end{figure}

Define the basis elements, using the canonical cycles $m_j, n_j$ of the fundamental polygon of $S_1$:
\begin{align*}
	a_i &\coloneq [r_i], & b_i &\coloneq -[r_i] + [c_i], & i &= 1, \ldots, l - 1; \\
	a^{\prime}_j &\coloneq [m_j] + \tau([m_j]), & b^{\prime}_j &\coloneq [n_j], & j &= 1, \ldots, g'; \\
	a^{\prime\prime}_j &\coloneq [n_j] + \tau([n_j]), & b^{\prime\prime}_j &\coloneq \tau([m_j]), & j &= 1, \ldots, g'.
\end{align*}

Analogously to our observation in (ii), we have $l=1+g-2g^{\prime}$, so $2(2g'+l-1)=2g$ and the number of elements is correct. We claim that the ordered set
\[
\left(
a_1, \ldots, a_{l-1}, 
a^{\prime}_1, a^{\prime \prime}_1, \ldots, a^{\prime}_{g^{\prime}}, a^{\prime \prime}_{g^{\prime}},
b_1, \ldots, b_{l-1}, 
b^{\prime}_1, b^{\prime \prime}_1, \ldots, b^{\prime}_{g^{\prime}}, b^{\prime \prime}_{g^{\prime}}
\right)
\]
forms a symplectic basis of $H_1(\Sigma, \Z)$. Checking intersection numbers:
\begin{align*}
	\mathrm{int}(a_i, a_j) &= \mathrm{int}([r_i], [r_j]) = 0, \\
	\mathrm{int}(b_i, b_j) &= \mathrm{int}(-[r_i]+[c_i], -[r_j]+[c_j]) = -\mathrm{int}([r_i],[c_j]) - \mathrm{int}([c_i],[r_j]) = -\delta_{ij} + \delta_{ji} = 0, \\
	\mathrm{int}(a_i, b_j) &= \mathrm{int}([r_i], -[r_j] + [c_j]) = \delta_{ij}.
\end{align*}

For the action of $\tau$: $\tau(a_i) = a_i$. For $b_i$, note that $[c_i] + \tau([c_i]) = [r_0] + [r_i]$. Thus:
\[
\tau(b_i) = -[r_i] + \tau([c_i]) = [r_0] - [c_i] = \sum_{j=1}^{l-1} a_j - (b_i + a_i) = \sum_{j \neq i} a_j - b_i.
\]
The matrix $H$ contains a dense block $D$ of size $(l-1) \times (l-1)$ with zeros on the diagonal and ones elsewhere, plus the usual $g'$ blocks of of $\begin{psmallmatrix} 0 & 1 \\ 1 & 0 \end{psmallmatrix}$.
Thus $\mathrm{diag}(H) = 0$. The rank analysis of $D$ modulo 2 (summing columns) shows $\mathrm{rank}(H) = g$ if $g$ is even, and $g-1$ if $g$ is odd.
\end{proof}

\begin{remark}
	It is important to note that our proofs of Propositions~\ref{prop:number_ovals} and~\ref{prop:realhomology} do not make use of the complex structure of the Riemann surface $\Sigma$. They apply without modification to any closed topological orientable surface.
	This topological nature is what allows us to transfer these results directly to the discrete setting, justifying the elementary proofs provided here. Note that other proofs, e.g., in \cite{GH81}, rely on algebraic geometry and are not suitable for discretization.
\end{remark}

\subsection{Period matrix}\label{sec:periodmatrix}

Using the symplectic homology basis adapted to the involution $\tau$ (Proposition~\ref{prop:realhomology}), we now derive the structural constraints on the period matrix of a real Riemann surface. While these results are classical \cite{BBEIM94,Vin93}, we present the derivation here to highlight the algebraic mechanism that will be mirrored exactly in the discrete setting.

\begin{definition}
	Let $\{a_1,\ldots,a_g,b_1,\ldots,b_g\}$ be a symplectic homology basis of a Riemann surface $\Sigma$. Let $\{\omega_1, \ldots, \omega_g\}$ be the dual basis of holomorphic differentials normalized by $\int_{a_i} \omega_j=\delta_{ij}$. The \textit{period matrix} $\Pi_{\Sigma}$ is the $g \times g$ matrix with entries
	\[
	(\Pi_{\Sigma})_{ij} \coloneq \int_{b_i} \omega_j.
	\]
\end{definition}

\begin{remark}[Discrete preview]
	In Section~\ref{sec:discrete_real}, the holomorphic differentials $\omega_j$ will be replaced by discrete holomorphic forms living on the edges of the medial graph (constructed by connecting the midpoints of the edges of the underlying bipartite quad-graph). The bipartite nature of the graph leads to a doubling of the dimension of the space of holomorphic forms, resulting in distinct \emph{black} and \emph{white} periods. While a specific normalization -- taking the arithmetic mean of these periods -- restores the correspondence to the classical period matrix $\Pi_{\Sigma}$, the study of the full \emph{complete discrete period matrix} remains of independent theoretical interest.
\end{remark}

The period matrix depends on the choice of the homology basis. Since any two symplectic bases are related by a symplectic transformation, the period matrices transform accordingly (via the action of the modular group). It is a fundamental result that $\Pi_{\Sigma}$ is symmetric and has a positive definite imaginary part. For real Riemann surfaces, the real part is constrained by the topology of the involution.

\begin{proposition}\label{prop:matrix_smooth}
	Let $(\Sigma, \tau)$ be a real Riemann surface. With respect to the homology basis constructed in Proposition~\ref{prop:realhomology}, the period matrix decomposes as
	\[
	\Pi_{\Sigma} = \frac{1}{2} H + i T,
	\]
	where $T \in GL(g, \R)$ is a real matrix and $H \in M(g, \Z_2)$ is the binary matrix describing the action of $\tau$ on the homology basis (from Proposition~\ref{prop:realhomology}).
\end{proposition}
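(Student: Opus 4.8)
The plan is to exploit the antiholomorphicity of $\tau$ to pin down how the pullback $\tau^{*}$ acts on the normalized holomorphic differentials $\omega_j$, and then to read off the real part of the period matrix directly from the homology action recorded in Proposition~\ref{prop:realhomology}. The whole argument is linear once the action of $\tau$ on the normalized basis is identified.

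First I would observe that because $\tau$ is antiholomorphic, $\tau^{*}\omega_j$ is an antiholomorphic $1$-form, and hence its complex conjugate $\overline{\tau^{*}\omega_j}$ is again holomorphic. The key normalization step is to compute the $a$-periods of this new holomorphic form. Using that integrating a conjugated form over a real cycle conjugates the integral, together with $\tau(a_i)=a_i$ in homology (Corollary~\ref{cor:homologybasis1}), one obtains
\[
\int_{a_i}\overline{\tau^{*}\omega_j}=\overline{\int_{a_i}\tau^{*}\omega_j}=\overline{\int_{\tau(a_i)}\omega_j}=\overline{\delta_{ij}}=\delta_{ij}.
\]
Since a holomorphic differential is uniquely determined by its vector of $a$-periods, this forces $\overline{\tau^{*}\omega_j}=\omega_j$, i.e.\ $\tau^{*}\omega_j=\overline{\omega_j}$.

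With this identity in hand, the remaining computation of the $b$-periods is immediate. Conjugating the definition of $\Pi_{\Sigma}$ and substituting $\overline{\omega_j}=\tau^{*}\omega_j$ together with the homology action $\tau(b_i)=\sum_k h_{ki}a_k-b_i$ from Proposition~\ref{prop:realhomology} gives
\[
\overline{(\Pi_{\Sigma})_{ij}}=\int_{b_i}\overline{\omega_j}=\int_{b_i}\tau^{*}\omega_j=\int_{\tau(b_i)}\omega_j=\sum_k h_{ki}\,\delta_{kj}-(\Pi_{\Sigma})_{ij}=h_{ji}-(\Pi_{\Sigma})_{ij}.
\]
Adding $(\Pi_{\Sigma})_{ij}$ to both sides yields $2\,\Re(\Pi_{\Sigma})_{ij}=h_{ji}$, and since both $\Pi_{\Sigma}$ and $H$ are symmetric this reads $\Re\Pi_{\Sigma}=\tfrac12 H$. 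Writing $T\coloneq\Im\Pi_{\Sigma}$ then produces the claimed decomposition $\Pi_{\Sigma}=\tfrac12 H+iT$ with $T$ real; invertibility $T\in GL(g,\R)$ follows from the classical Riemann bilinear relations, which guarantee that $\Im\Pi_{\Sigma}$ is positive definite.

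The step I expect to be the main obstacle is the bookkeeping of complex conjugation under pullback by the orientation-reversing map $\tau$ in the normalization step: one must verify carefully that $\tau^{*}$ sends holomorphic forms to antiholomorphic ones and that conjugating a form conjugates its period over a real cycle, so that the $a$-periods come out as $\delta_{ij}$ rather than acquiring an unwanted sign or conjugate. Everything downstream is then a formal computation over the symplectic basis already constructed in Proposition~\ref{prop:realhomology}, which is exactly the feature that makes the argument transferable to the discrete setting.
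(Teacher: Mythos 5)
Your proposal is correct and follows essentially the same route as the paper's proof: you identify $\overline{\tau^{*}\omega_j}$ as a holomorphic form with $a$-periods $\delta_{ij}$, conclude $\overline{\tau^{*}\omega_j}=\omega_j$ by uniqueness, and then read off $2\Re(\Pi_{\Sigma})_{ij}=h_{ij}$ from the action $\tau(b_i)=\sum_k h_{ki}a_k-b_i$, exactly as the paper does (up to conjugating both sides of the $b$-period identity). Your explicit remark that invertibility of $T$ follows from positive definiteness of $\Im\Pi_{\Sigma}$ is a small addition the paper leaves implicit, but it does not change the argument.
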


\begin{proof}
	From Proposition~\ref{prop:realhomology}, our basis satisfies $\tau(a_i) = a_i$ and $\tau(b_i) = \sum_{k=1}^g h_{ki} a_k - b_i$.
	
Consider the differential forms $\omega^*_j \coloneq \overline{\tau^*\omega_j}$, defined via the pullback $\tau^*\omega_j(p) = \omega_j(\tau(p))$. Since $\tau$ is antiholomorphic, $\tau^*\omega_j$ is antiholomorphic, and thus the conjugate $\omega^*_j$ is a holomorphic differential. We check its periods along the $a$-cycles. Since $\tau(a_i) = a_i$ (preserving orientation in the integral due to the double conjugation of map and form):
\[
\int_{a_i} \omega^*_j = \int_{a_i} \overline{\tau^* \omega_j} = \overline{\int_{\tau(a_i)} \omega_j} = \overline{\int_{a_i} \omega_j} = \overline{\delta_{ij}} = \delta_{ij}.
\]
Since a holomorphic differential is uniquely determined by its $a$-periods, we conclude that $\omega^*_j = \omega_j$ for all $j=1,\ldots,g$.
	
	We now calculate the entries of the period matrix. Using $\omega_j = \overline{\tau^*\omega_j}$:
	\begin{align*}
		(\Pi_{\Sigma})_{ij} &= \int_{b_i} \omega_j = \int_{b_i} \overline{\tau^*\omega_j} = \overline{\int_{\tau(b_i)} \omega_j}.
	\end{align*}
	Substituting the homology relation $\tau(b_i) = \sum_{k=1}^g h_{ki} a_k - b_i$:
	\begin{align*}
		(\Pi_{\Sigma})_{ij} &= \overline{\int_{\sum_{k} h_{ki} a_k - b_i} \omega_j} 
		= \sum_{k=1}^g h_{ki} \overline{\int_{a_k} \omega_j} - \overline{\int_{b_i} \omega_j} \\
		&= \sum_{k=1}^g h_{ki} \delta_{kj} - \overline{(\Pi_{\Sigma})_{ij}} \\
		&= h_{ji} - \overline{(\Pi_{\Sigma})_{ij}}.
	\end{align*}
	This implies $(\Pi_{\Sigma})_{ij} + \overline{(\Pi_{\Sigma})_{ij}} = h_{ji}$. Since $\Pi_{\Sigma}$ is symmetric, we have $2 \Re((\Pi_{\Sigma})_{ij}) = h_{ji} = h_{ij}$. Thus, the real part of the period matrix is exactly $\frac{1}{2}H$.
\end{proof}

\begin{remark}[Discrete preview]
	This result is of central importance for the discrete theory. Since the discrete period matrix will be defined via linear operations that respect the involution symmetry, we will find that the discrete period matrix satisfies the exact same relation $\Pi = \frac{1}{2} H + i T$. This ensures that discrete M-curves also possess purely imaginary period matrices.
\end{remark}

\begin{corollary}\label{cor:m-curve}
	The period matrix of an M-curve $(\Sigma,\tau)$ is purely imaginary (with respect to the homology basis of Proposition~\ref{prop:realhomology}).
\end{corollary}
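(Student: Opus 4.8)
The plan is to recognize that this corollary is a direct consequence of the two classification results already established, and that essentially no new work is required beyond chaining them together. The defining property of an M-curve is that the number of real ovals attains its maximum, $k = g+1$. My first step would be to invoke Proposition~\ref{prop:number_ovals}~(ii), which guarantees that an M-curve is necessarily of \emph{dividing} type: the case $k=g+1$ is incompatible with the non-dividing rank formula, so $\Sigma \setminus \fix(\tau)$ splits into two components.

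Next, I would feed this into Proposition~\ref{prop:realhomology}~(i), the dividing case. That proposition asserts $\mathrm{rank}(H) = g+1-k$ for the symmetric binary matrix $H$ describing the action of $\tau$ on the adapted symplectic homology basis. Substituting $k = g+1$ yields $\mathrm{rank}(H) = 0$, which forces $H = 0$ as a matrix over $\Z_2$. This is the crux of the argument: the topological extremality of the M-curve collapses the entire combinatorial datum $H$ to zero.

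Finally, I would apply Proposition~\ref{prop:matrix_smooth}, which gives the canonical decomposition $\Pi_{\Sigma} = \tfrac{1}{2}H + iT$ with respect to exactly this basis. Since $H = 0$, the real part of $\Pi_{\Sigma}$ vanishes identically, leaving $\Pi_{\Sigma} = iT$ with $T$ real, i.e.\ the period matrix is purely imaginary.

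Honestly, there is no serious obstacle here: the substantive content lives entirely in Propositions~\ref{prop:number_ovals},~\ref{prop:realhomology}, and~\ref{prop:matrix_smooth}, and the corollary is a bookkeeping specialization. The only point requiring a moment's care is that the decomposition in Proposition~\ref{prop:matrix_smooth} and the rank formula in Proposition~\ref{prop:realhomology} must be understood with respect to the \emph{same} adapted symplectic basis; since both propositions are stated relative to the basis of Proposition~\ref{prop:realhomology}, this compatibility is automatic, and the conclusion follows immediately.
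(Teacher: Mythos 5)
Your proposal is correct and follows exactly the paper's own argument: Proposition~\ref{prop:number_ovals}~(ii) gives the dividing type, Proposition~\ref{prop:realhomology}~(i) then forces $\mathrm{rank}(H) = g+1-k = 0$ hence $H=0$, and Proposition~\ref{prop:matrix_smooth} yields $\Pi_{\Sigma} = iT$. Your closing remark about using the same adapted basis throughout is a fair point of care, and it is indeed automatic here, just as you say.
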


\begin{proof}
	An M-curve has $k=g+1$ real ovals. By Proposition~\ref{prop:number_ovals}, it is of dividing type. Proposition~\ref{prop:realhomology} then implies $\mathrm{rank}(H) = g+1 - k = 0$. Since $H$ is the zero matrix, Proposition~\ref{prop:matrix_smooth} yields $\Pi_{\Sigma} = iT$ with $T \in GL(g,\R)$.
\end{proof}

Finally, we note that the converse holds, a result due to Silhol \cite{Sil01}, which provides an algebraic characterization of M-curves.

\begin{proposition}\label{prop:Silhol_converse}
	Let $\Sigma$ be a Riemann surface of genus $g$ with period matrix $\Pi_{\Sigma}$. If $\Re(\Pi_{\Sigma}) = 0$, then there exists an antiholomorphic involution $\tau$ such that $(\Sigma,\tau)$ is an M-curve.
\end{proposition}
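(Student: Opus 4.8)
The plan is to produce the involution as complex conjugation transported from the Jacobian and then descend it to the curve by Torelli's theorem. Assume first $g\ge 2$; the cases $g\le 1$ are handled directly, since $z\mapsto\bar z$ on $\widehat{\C}$, respectively on $\C/(\Z+\Pi_\Sigma\Z)$ with $\Pi_\Sigma$ purely imaginary, is an antiholomorphic involution whose fixed-point set consists of $g+1$ ovals. The starting observation is a comparison of $\Sigma$ with its complex conjugate surface $\bar\Sigma$. The holomorphic differentials of $\bar\Sigma$ are the conjugates $\overline{\omega_j}$, and reversing orientation flips the intersection form, so $(a_1,\dots,a_g,-b_1,\dots,-b_g)$ is a symplectic basis of $H_1(\bar\Sigma,\Z)$ for which the $\overline{\omega_j}$ are already $a$-normalized. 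A short computation then gives $\Pi_{\bar\Sigma}=-\overline{\Pi_\Sigma}$, and under the hypothesis $\Pi_\Sigma=iT$ this yields $\Pi_{\bar\Sigma}=\Pi_\Sigma$: the two surfaces share the same period matrix.

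By the Torelli theorem the equality of period matrices is induced by a biholomorphism $f\colon\Sigma\to\bar\Sigma$ whose action on homology is the symplectic map $a_i\mapsto a_i,\ b_i\mapsto -b_i$ (up to the overall sign allowed by the strong form, which does not affect the argument below). Composing $f$ with the tautological antiholomorphic identification $\bar\Sigma\to\Sigma$ yields an antiholomorphic automorphism $\tau$ of $\Sigma$ with $\tau_\ast a_i=a_i$ and $\tau_\ast b_i=-b_i$. This homology action squares to the identity, so $\tau^2$ is a \emph{holomorphic} automorphism acting trivially on $H_1(\Sigma,\Z)$; since the representation $\mathrm{Aut}(\Sigma)\hookrightarrow \mathrm{Sp}(2g,\Z)$ is faithful for $g\ge 2$, we conclude $\tau^2=\mathrm{id}$, so $\tau$ is a genuine antiholomorphic involution.

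Finally I would read off the topological type. In the chosen symplectic basis the relations $\tau_\ast a_i=a_i$, $\tau_\ast b_i=-b_i$ are exactly the adapted form of Proposition~\ref{prop:realhomology} with $H=0$, so the mod-$2$ invariants of $(\Sigma,\tau)$ are $\mathrm{rank}(H)=0$ and $\mathrm{diag}(H)=0$. Comparing with the three cases of Proposition~\ref{prop:realhomology}, case (ii) is excluded because it forces $\mathrm{diag}(H)=1$, and case (iii) is excluded because it forces $\mathrm{rank}(H)\in\{g-1,g\}\ge 1$ for $g\ge 2$. Hence $(\Sigma,\tau)$ is of dividing type with $g+1-k=\mathrm{rank}(H)=0$, that is $k=g+1$, so it is an M-curve.

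The crux of the argument is the descent from the Jacobian to the curve: producing an honest involution on $\Sigma$, rather than merely on $J(\Sigma)$ or one well-defined only up to a spurious automorphism, is precisely where Torelli's theorem together with the faithfulness of the automorphism action on homology is indispensable, and where the elementary topological and linear methods of the previous sections no longer suffice. A secondary subtlety is the low-genus bookkeeping: for $g=1$ the invariants $(\mathrm{rank}(H),\mathrm{diag}(H))=(0,0)$ no longer single out the M-curve type among the classification cases, so the involution and its ovals must be exhibited by hand as indicated above.
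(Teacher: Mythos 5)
The paper does not actually prove this proposition; it is stated as a known result with a citation to Silhol, so there is no internal proof to compare against. Your argument is therefore judged on its own terms, and its architecture is the classical one: pass to the conjugate surface $\bar\Sigma$, compute $\Pi_{\bar\Sigma}=-\overline{\Pi_\Sigma}=\Pi_\Sigma$ under the hypothesis, invoke strong Torelli to obtain a biholomorphism $f\colon\Sigma\to\bar\Sigma$ inducing $a_i\mapsto a_i$, $b_i\mapsto -b_i$ up to sign, compose with the tautological antiholomorphic identification to get an antiholomorphic $\tau$, and force $\tau^2=\mathrm{id}$ from the faithfulness of $\mathrm{Aut}(\Sigma)\to\mathrm{Sp}(2g,\Z)$ for $g\geq 2$. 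These steps, including the period computation, the handling of the $\pm$ ambiguity in Torelli, and the separate treatment of $g\leq 1$ (where the faithfulness argument genuinely fails), are correct.

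There is, however, one genuine gap, in the final step where you read off the topological type. Proposition~\ref{prop:realhomology} is an \emph{existence} statement: for each topological type there exists \emph{some} adapted symplectic basis whose matrix $H$ has the stated rank and diagonal. You have exhibited one particular adapted basis in which $H=0$ and then speak of ``the mod-$2$ invariants of $(\Sigma,\tau)$'' -- but neither the paper nor your proof establishes that $\mathrm{rank}(H)$ and $\mathrm{diag}(H)$ are independent of the choice of adapted basis. Without that, it is a priori conceivable that a non-dividing surface admits your basis with $H=0$ alongside a different adapted basis with $\mathrm{diag}(H)=1$, and the case comparison collapses. The missing lemma is true and not hard: since the adapted form forces $\ker(\mathrm{id}-\tau)=\operatorname{span}(a_1,\dots,a_g)$, any two adapted symplectic bases are related by a transformation $a'_j=\sum_i P_{ij}a_i$, $b'_j=\sum_i R_{ij}a_i+\sum_i Q_{ij}b_i$ with $P^TQ=I_g$ by the symplectic condition, and a direct computation gives $H'=P^{-1}(2R+HQ)\equiv P^{-1}H\,(P^{-1})^T \pmod 2$. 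Hence the congruence class of $H$ over $\Z_2$ is an invariant of $(\Sigma,\tau)$, and by Corollary~\ref{cor:characterization} so are $\mathrm{rank}(H)$ and $\mathrm{diag}(H)$; with this inserted, your exclusion of cases (ii) and (iii) and the conclusion $k=g+1$ are valid for $g\geq 2$. Your closing remark about $g=1$ is also correct and confirms the gap is not vacuous there: for $g=1$ the pairs of invariants $(0,0)$ arising in case (i) with $k=2$ and in case (iii) with $k=0$ coincide, so the invariants alone cannot decide the type, and the explicit construction on $\C/(\Z+\Pi_\Sigma\Z)$ is indeed required.
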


%%%%%%%%%%%%%

\section{Discrete Riemann surfaces and their period matrices}\label{sec:basic}

In this section, we provide a concise summary of the linear theory of discrete Riemann surfaces based on general quad-graphs. The theory was originally developed by Mercat \cite{Me01} on a purely combinatorial level. Later, Bobenko and the second author \cite{BoG17} established a rigorous connection to the classical theory of Riemann surfaces by constructing a complex atlas of local charts, allowing for a parallel development of the discrete and continuous theories.
In this work, however, we restrict ourselves primarily to the combinatorial framework. As we will see, this level of abstraction is sufficient to define and analyze discrete real Riemann surfaces and their period matrices.

\subsection{Discrete Riemann surfaces}\label{sec:Riemann}

We replace the smooth Riemann surface with a cellular decomposition of a compact oriented surface $\Sigma$.

\begin{definition}
	A \textit{discrete Riemann surface}, denoted by $(\Sigma,\Lambda)$, consists of a compact oriented surface $\Sigma$ and a finite decomposition $\Lambda$ of $\Sigma$ into quadrilaterals $F(\Lambda)$, satisfying the following properties:
	\begin{itemize}
		\item The decomposition is a \textit{quad-graph}, meaning all its faces are quadrilaterals.
		\item It is \textit{strongly regular}, meaning that two different faces are either disjoint, share exactly one vertex, or share exactly one edge.
		\item The graph $(V(\Lambda),E(\Lambda))$ is bipartite. We fix a coloring of the vertices $V(\Lambda) = V(\Gamma) \sqcup V(\Gamma^*)$ into black vertices $V(\Gamma)$ and white vertices $V(\Gamma^*)$ such that each edge connects a black and a white vertex.
	\end{itemize}
\end{definition}

The diagonals of the quadrilaterals in $F(\Lambda)$ give rise to two connected graphs:
\begin{itemize}
	\item The \textit{black graph} $\Gamma$, whose vertices are the black vertices of $\Lambda$ and whose edges connect black vertices sharing a face in $\Lambda$.
	\item The \textit{white graph} $\Gamma^*$, whose vertices are the white vertices of $\Lambda$ and whose edges connect white vertices sharing a face in $\Lambda$.
\end{itemize}
The dual cell decomposition of $\Lambda$ is denoted by $\Diamond$.

To define a conformal structure, we assign a geometric weight to each face.

\begin{definition}
	A \textit{discrete complex structure} on $(\Sigma, \Lambda)$ is a map $\rho: F(\Lambda) \to \{z \in \mathds{C} \mid \mathrm{Re}(z) > 0\}$, assigning a coefficient $\rho_Q$ to each quadrilateral $Q$.
\end{definition}

Although we define $\rho$ abstractly, its geometric meaning justifies the term complex structure. As shown in \cite{BoG17}, the values $\rho_Q$ allow one to construct local charts where each abstract quadrilateral $Q$ is realized as a geometric quadrilateral in $\mathds{C}$ (see Figure~\ref{fig:quadrilateral}).

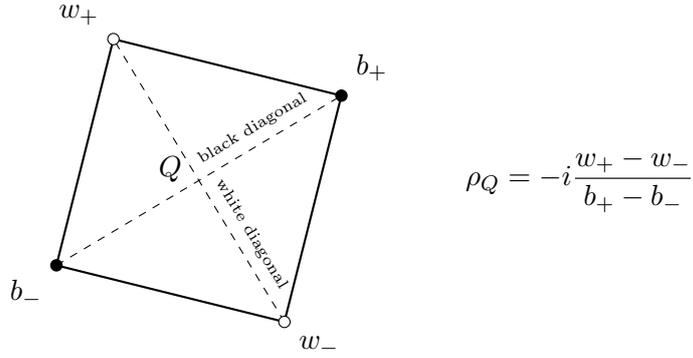
\begin{figure}[htbp]
	\centering
	\begin{tikzpicture}[scale=1.5,
		black_vertex/.style={circle,draw=black,fill=black,inner sep=0pt,minimum size=1.5mm},
		white_vertex/.style={circle,draw=black,fill=white,inner sep=0pt,minimum size=1.5mm}]
		
		\coordinate (b_minus) at (0,0);
		\coordinate (w_minus) at (2,-0.5);
		\coordinate (b_plus) at (2.5,1.5);
		\coordinate (w_plus) at (0.5,2);
		
		\draw[thick] (b_minus) -- (w_minus) -- (b_plus) -- (w_plus) -- cycle;

		\draw[dashed] (b_minus) -- (b_plus) node[pos=0.72, sloped, above, font=\tiny] {black diagonal};
		\draw[dashed] (w_minus) -- (w_plus) node[pos=0.28, sloped, above, font=\tiny] {white diagonal};
		
		\node[black_vertex, label=below left:$b_-$] at (b_minus) {};
		\node[white_vertex, label=below right:$w_-$] at (w_minus) {};
		\node[black_vertex, label=above right:$b_+$] at (b_plus) {};
		\node[white_vertex, label=above left:$w_+$] at (w_plus) {};
		
		\node at (1, 0.85) {$Q$};
		
		\node[anchor=west] at (3.5, 0.75) {$\displaystyle \rho_Q = -i\frac{w_+ - w_-}{b_+ - b_-}$};
		
	\end{tikzpicture}
	\caption{A quadrilateral $Q \in F(\Lambda)$. The diagonals correspond to the edges of the black and white graphs, respectively. The discrete complex structure $\rho_Q$ is defined by the quotient of the diagonals multiplied by $-i$.}
	\label{fig:quadrilateral}
\end{figure}

\begin{remark}
	In such a local chart, if the vertices $b_-, w_-, b_+, w_+$ are ordered counterclockwise, the coefficient is given by the cross-ratio of the diagonals:
	\[ \rho_Q = -i\frac{w_+ - w_-}{b_+ - b_-}. \]
	The condition $\mathrm{Re}(\rho_Q) > 0$ ensures that the quadrilateral is positively oriented and non-degenerate.
	If the diagonals are orthogonal, $\rho_Q$ is a positive real number. In this case, we call the discretization \textit{orthodiagonal}.
\end{remark}

\subsection{Discrete holomorphic functions and differentials}\label{sec:differential}

We begin by defining holomorphicity for functions defined on the vertices of the quad-graph. This definition relies on the geometric structure imposed by the coefficients $\rho_Q$.

\begin{definition}
	A function $f: V(\Lambda) \to \mathds{C}$ is called \textit{discrete holomorphic} if, for every quadrilateral $Q \in F(\Lambda)$, it satisfies the \textit{discrete Cauchy-Riemann equation}:
	\[ f(w_+) - f(w_-) = i \rho_Q (f(b_+) - f(b_-)), \]
	where $b_-, w_-, b_+, w_+$ are the vertices of $Q$ in counterclockwise order, starting with a black vertex (see Figure~\ref{fig:quadrilateral}).
\end{definition}

Geometrically, this condition mimics the continuous Cauchy-Riemann property, where the derivative in one direction determines the derivative in the orthogonal direction via multiplication by $i$. Here, the partial derivatives are replaced by finite differences along the diagonals, and we have to take the geometry of the quadrilateral given by the factor $\rho_Q$ into account. This approach represents a \textit{linear discretization}, compared to other discretizations of complex analysis relying on non-linear equations such as circle patterns.

To define discrete holomorphic \textit{differentials}, we introduce the medial graph $X$, which serves as the domain for discrete one-forms.

\begin{definition}
	The \textit{medial graph} $X$ of $\Lambda$ is defined combinatorially as follows:
	\begin{itemize}
		\item The vertices $V(X)$ correspond to the edges $E(\Lambda)$. We identify these vertices with the midpoints of the edges of $\Lambda$.
		\item Two vertices in $V(X)$ are connected by an edge in $E(X)$ if the corresponding edges in $\Lambda$ share a vertex and belong to the same quadrilateral.
	\end{itemize}
\end{definition}

The faces of $X$ correspond bijectively to the union $V(\Lambda) \cup F(\Lambda)$. A face $F_Q \in F(X)$ (for $Q \in F(\Lambda)$) is formed by the four edges of $X$ within the quadrilateral $Q$. A face $F_v \in F(X)$ (for $v \in V(\Lambda)$) is formed by the edges of $X$ surrounding the vertex $v$.

\begin{figure}[htbp]
	\centering
	\begin{tikzpicture}[scale=1.4,
		black_vertex/.style={circle,draw=black,fill=black,inner sep=0pt,minimum size=1.5mm},
		white_vertex/.style={circle,draw=black,fill=white,inner sep=0pt,minimum size=1.5mm},
		medial_vertex/.style={circle,draw=black,fill=gray,inner sep=0pt,minimum size=1mm},
		black_edge_style/.style={thick, dotted, black},
		white_edge_style/.style={thick, dashed, black}
		]
		
		\coordinate (c) at (0,0); 
		\coordinate (n) at (0,2);
		\coordinate (e) at (2.2,0.2);
		\coordinate (s) at (-0.2,-2);
		\coordinate (w) at (-2,-0.2);
		\coordinate (ne) at (2, 2.2);
		\coordinate (se) at (1.8, -1.8);
		\coordinate (sw) at (-2.2, -1.8);
		\coordinate (nw) at (-1.8, 1.8);
		
		\draw[thin, gray] (c) -- (n); \draw[thin, gray] (c) -- (e);
		\draw[thin, gray] (c) -- (s); \draw[thin, gray] (c) -- (w);
		\draw[thin, gray] (n) -- (ne) -- (e); \draw[thin, gray] (e) -- (se) -- (s);
		\draw[thin, gray] (s) -- (sw) -- (w); \draw[thin, gray] (w) -- (nw) -- (n);
		
		\coordinate (m_cn) at ($(c)!0.5!(n)$); \coordinate (m_ce) at ($(c)!0.5!(e)$);
		\coordinate (m_cs) at ($(c)!0.5!(s)$); \coordinate (m_cw) at ($(c)!0.5!(w)$);
		\coordinate (m_n_ne) at ($(n)!0.5!(ne)$); \coordinate (m_e_ne) at ($(e)!0.5!(ne)$);
		\coordinate (m_e_se) at ($(e)!0.5!(se)$); \coordinate (m_s_se) at ($(s)!0.5!(se)$);
		\coordinate (m_s_sw) at ($(s)!0.5!(sw)$); \coordinate (m_w_sw) at ($(w)!0.5!(sw)$);
		\coordinate (m_w_nw) at ($(w)!0.5!(nw)$); \coordinate (m_n_nw) at ($(n)!0.5!(nw)$);
		
		\draw[white_edge_style] (m_cn) -- (m_ce) -- (m_cs) -- (m_cw) -- cycle;
		\draw[black_edge_style] (m_cn) -- (m_n_ne); \draw[white_edge_style] (m_n_ne) -- (m_e_ne); \draw[black_edge_style] (m_e_ne) -- (m_ce);
		\draw[black_edge_style] (m_ce) -- (m_e_se); \draw[white_edge_style] (m_e_se) -- (m_s_se); \draw[black_edge_style] (m_s_se) -- (m_cs);
		\draw[black_edge_style] (m_cs) -- (m_s_sw); \draw[white_edge_style] (m_s_sw) -- (m_w_sw); \draw[black_edge_style] (m_w_sw) -- (m_cw);
		\draw[black_edge_style] (m_cw) -- (m_w_nw); \draw[white_edge_style] (m_w_nw) -- (m_n_nw); \draw[black_edge_style] (m_n_nw) -- (m_cn);
		
		\node[black_vertex, label={[label distance=-2pt]below left:$v$}] at (c) {};
		\node[white_vertex] at (n) {}; \node[white_vertex] at (e) {}; \node[white_vertex] at (s) {}; \node[white_vertex] at (w) {};
		\node[black_vertex] at (ne) {}; \node[black_vertex] at (se) {}; \node[black_vertex] at (sw) {}; \node[black_vertex] at (nw) {};
		\foreach \pt in {m_cn, m_ce, m_cs, m_cw, m_n_ne, m_e_ne, m_e_se, m_s_se, m_s_sw, m_w_sw, m_w_nw, m_n_nw} \node[medial_vertex] at (\pt) {};
		
		\node at (1.2, 1.2) {$Q$}; \node at (0.9, 0.9) {$F_Q$}; \node at (-0.3, 0.4) {$F_v$};
		
		\node[anchor=west, scale=0.8] at (2.5, 1) {\textbf{Medial Edges:}};
		\draw[black_edge_style] (2.6, 0.6) -- (3.6, 0.6) node[right, black] {Black edge (dotted)};
		\draw[white_edge_style] (2.6, 0.2) -- (3.6, 0.2) node[right, black] {White edge (dashed)};
	\end{tikzpicture}
	\caption{The medial graph $X$ (gray vertices) on a patch of $\Lambda$. Faces of $X$ are either inside a quadrilateral ($F_Q$) or around a vertex ($F_v$).}
	\label{fig:medial_graph}
\end{figure}
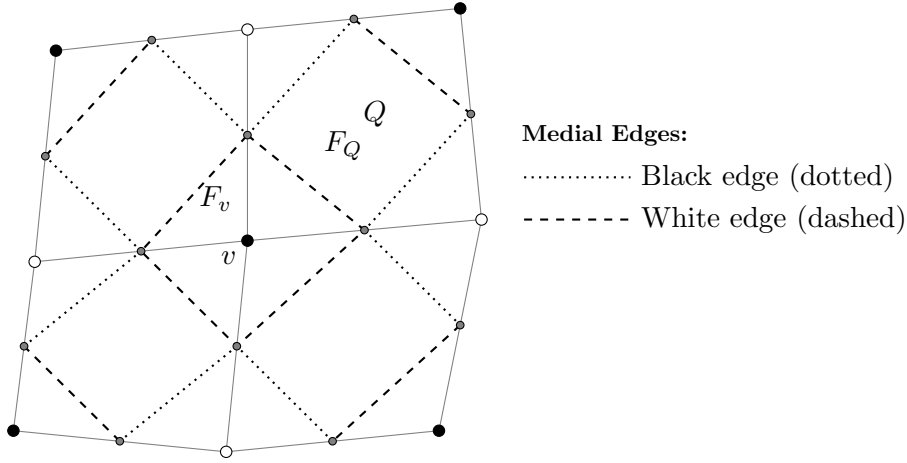

\begin{definition}
	A \textit{discrete one-form} (or discrete differential) $\omega$ is a complex-valued function defined on the directed edges of $X$, satisfying $\omega(-e) = -\omega(e)$.
\end{definition}

We focus on a specific class of one-forms that respects the geometry of the quadrilaterals.

\begin{definition}
	A discrete differential $\omega$ is said to be of \textit{type $\Diamond$} if, for every quadrilateral $Q \in F(\Lambda)$, the values of $\omega$ on the two opposite black edges of the face $F_Q$ are equal, and similarly for the two opposite white edges.
\end{definition}

\begin{remark}
	The terminology is motivated by the geometric realization. If we map the vertices of a quadrilateral $Q$ to the complex plane, the corresponding face $F_Q$ of the medial graph (connecting the midpoints of the edges of $Q$) forms a Varignon parallelogram. In such a local chart, the differentials $dz$ and $d\bar{z}$ are canonically defined. A discrete form is of type $\Diamond$ if and only if, restricted to $F_Q$, it can be represented as $\omega = p dz + q d\bar{z}$. Due to the parallelism of opposite edges, such a form naturally takes identical values on opposite edges.
\end{remark}

\begin{definition}
	The \textit{integral} of a discrete one-form $\omega$ along a directed path $P$ on the medial graph is defined as the sum of the edge values:
	\[ \int_P \omega \coloneq \sum_{e \in P} \omega(e). \]
\end{definition}

We define the exterior derivative and the concept of exact forms using the discrete Stokes' theorem.

\begin{definition}
	The \textit{discrete exterior derivative} $d$ is defined as follows:
	\begin{itemize}
		\item For a function $f: V(\Lambda) \to \mathds{C}$, its differential $df$ is a one-form on the medial graph $X$. Let $e$ be a directed edge of $X$ connecting the midpoint of an edge $(u,v)$ to the midpoint of an edge $(v,w)$ of $\Lambda$ (where $u,v,w \in V(\Lambda)$). We define $df(e)$ as the difference of the arithmetic means of the function values at the endpoints of the corresponding edges of $\Lambda$:
		\[ df(e) \coloneq \frac{f(v) + f(w)}{2} - \frac{f(u) + f(v)}{2} = \frac{f(w) - f(u)}{2}. \]
		\item For a one-form $\omega$, its derivative $d\omega$ is a 2-form on the faces of $X$ defined by the boundary integral:
		\[ d\omega(F) \coloneq \oint_{\partial F} \omega. \]
	\end{itemize}
	A one-form $\omega$ is called \textit{closed} if $d\omega = 0$ (i.e., its sum around any face $F_Q$ or $F_v$ vanishes). It is called \textit{exact} if $\omega = df$ for some function $f$.
\end{definition}

\begin{remark}
	In the analytic theory developed by Bobenko and the second author \cite{BoG17}, the exterior derivative is defined via local chart representations, mimicking the classical theory where $df = \frac{\partial f}{\partial z} dz + \frac{\partial f}{\partial \bar{z}} d\bar{z}$. In that framework, the relation $\int_{\partial F} \omega = \iint_F d\omega$ (Stokes' theorem) is a non-trivial result derived from the geometry. However, following the original combinatorial approach of Mercat \cite{Me01}, we simplify the exposition here by ignoring charts and defining the derivative directly via the Stokes property. This allows us to work consistently without explicit reference to the geometric realization.
\end{remark}

Finally, we define discrete holomorphic differentials. In the continuous setting, holomorphic differentials are closed one-forms that can locally be written as $g(z) dz$. Ideally, we would look for closed forms $\omega$ that locally satisfy $\omega = p dz$. However, to maintain our chart-independent approach, we utilize the fact that locally any such form is the differential of a holomorphic function.

\begin{definition}
	A discrete one-form $\omega$ of type $\Diamond$ is called \textit{discrete holomorphic} if:
	\begin{enumerate}
		\item It is \textit{closed} ($d\omega = 0$).
		\item It is \textit{locally exact} with respect to a discrete holomorphic function. That is, for every quadrilateral $Q \in F(\Lambda)$, there exists a discrete holomorphic function $f$ such that $\omega\vert_Q = df$.
	\end{enumerate}
\end{definition}

This definition ensures that the periods of discrete holomorphic differentials are well-defined (due to closedness) and that they carry the correct complex structure induced by $\rho$ (due to the local relationship with holomorphic functions).

\subsection{Discrete period matrices}\label{sec:period}

We consider the symplectic homology basis $\{a_1, \ldots, a_g, b_1, \ldots, b_g\}$ of $H_1(\Sigma,\Z)$ introduced in Section~\ref{sec:continuous}. Let $\alpha_1, \ldots, \alpha_g, \beta_1, \ldots, \beta_g$ be closed paths on the medial graph $X$ representing these homology classes.

As illustrated in Figure~\ref{fig:contours2}, any oriented cycle $P$ on $X$ naturally induces two parallel cycles: $B(P)$ on the black graph $\Gamma$ and $W(P)$ on the white graph $\Gamma^*$. Specifically, $B(P)$ consists of the edges of $\Gamma$ corresponding to the black edges of $P$, while $W(P)$ consists of the edges of $\Gamma^*$ corresponding to the white edges of $P$. We denote the sets of oriented edges of $X$ that are parallel to the edges of $B(P)$ and $W(P)$ as $BP$ and $WP$, respectively.

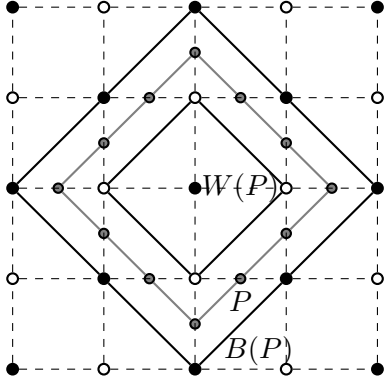
\begin{figure}[htbp]
	\centering
	\begin{tikzpicture}[
		scale=1.2,
		>=Latex,
		thick,
		vertexBlack/.style={circle,draw=black,fill=black,inner sep=0pt,minimum size=4pt},
		vertexWhite/.style={circle,draw=black,fill=white,inner sep=0pt,minimum size=4pt},
		vertexMedial/.style={circle,draw=black,fill=gray,inner sep=0pt,minimum size=3.5pt}
		]
		\node[vertexBlack] (w1) at (-2,-2) {};
		\node[vertexBlack] (w2) at (0,-2) {};
		\node[vertexBlack] (w3) at (2,-2) {};
		\node[vertexBlack] (w4) at (-1,-1) {};
		\node[vertexBlack] (w5) at (1,-1) {};
		\node[vertexBlack] (w6) at (-2,0) {};
		\node[vertexBlack] (w7) at (0,0) {};
		\node[vertexBlack] (w8) at (2,0) {};
		\node[vertexBlack] (w9) at (-1,1) {};
		\node[vertexBlack] (w10) at (1,1) {};
		\node[vertexBlack] (w11) at (-2,2) {};
		\node[vertexBlack] (w12) at (0,2) {};
		\node[vertexBlack] (w13) at (2,2) {};
		
		\node[vertexWhite] (b1) at (-1,-2) {};
		\node[vertexWhite] (b2) at (1,-2) {};
		\node[vertexWhite] (b3) at (-2,-1) {};
		\node[vertexWhite] (b4) at (0,-1) {};
		\node[vertexWhite] (b5) at (2,-1) {};
		\node[vertexWhite] (b6) at (-1,0) {};
		\node[vertexWhite] (b7) at (1,0) {};
		\node[vertexWhite] (b8) at (-2,1) {};
		\node[vertexWhite] (b9) at (0,1) {};
		\node[vertexWhite] (b10) at (2,1) {};
		\node[vertexWhite] (b11) at (-1,2) {};
		\node[vertexWhite] (b12) at (1,2) {};
		
		\node[vertexMedial] (m1) at (0,-1.5) {};
		\node[vertexMedial] (m2) at (0.5,-1) {};
		\node[vertexMedial] (m3) at (1,-0.5) {};
		\node[vertexMedial] (m4) at (1.5,0) {};
		\node[vertexMedial] (m5) at (1,0.5) {};
		\node[vertexMedial] (m6) at (0.5,1) {};
		\node[vertexMedial] (m7) at (0,1.5) {};
		\node[vertexMedial] (m8) at (-0.5,1) {};
		\node[vertexMedial] (m9) at (-1,0.5) {};
		\node[vertexMedial] (m10) at (-1.5,0) {};
		\node[vertexMedial] (m11) at (-1,-0.5) {};
		\node[vertexMedial] (m12) at (-0.5,-1) {};
		
		\draw[dashed, thin] (w1) -- (b1) -- (w2) -- (b2) -- (w3);
		\draw[dashed, thin] (b3) -- (w4) -- (b4) -- (w5) -- (b5);
		\draw[dashed, thin] (w6) -- (b6) -- (w7) -- (b7) -- (w8);
		\draw[dashed, thin] (b8) -- (w9) -- (b9) -- (w10) -- (b10);
		\draw[dashed, thin] (w11) -- (b11) -- (w12) -- (b12) -- (w13);
		
		\draw[dashed, thin] (w1) -- (b3) -- (w6) -- (b8) -- (w11);
		\draw[dashed, thin] (b1) -- (w4) -- (b6) -- (w9) -- (b11);
		\draw[dashed, thin] (w2) -- (b4) -- (w7) -- (b9) -- (w12);
		\draw[dashed, thin] (b2) -- (w5) -- (b7) -- (w10) -- (b12);
		\draw[dashed, thin] (w3) -- (b5) -- (w8) -- (b10) -- (w13);
		
		\draw (w2) -- (w5) -- (w8) -- (w10) -- (w12) -- (w9) -- (w6) -- (w4) -- (w2);
		
		\draw (b4) -- (b7) -- (b9) -- (b6) -- (b4);
		
		\draw[gray] (m1) -- (m2) -- (m3) -- (m4) -- (m5) -- (m6) -- (m7) -- (m8) -- (m9) -- (m10) -- (m11) -- (m12) -- (m1);
		
		\coordinate[label=center:$W(P)$] (z1) at (0.5,0);
		\coordinate[label=center:$P$] (z2) at (0.5,-1.25);
		\coordinate[label=center:$B(P)$] (z3) at (0.7,-1.8);
		
	\end{tikzpicture}
	\caption{A cycle $P$ on the medial graph $X$ (gray), and the induced cycles $B(P)$ on $\Gamma$ (black) and $W(P)$ on $\Gamma^*$ (white).}
	\label{fig:contours2}
\end{figure}

\begin{definition}\label{def:blackwhiteperiod}
	Let $\omega$ be a closed discrete differential of type $\Diamond$. For $1 \leq k \leq g$, we define its standard periods as
	\[ A_k \coloneq \oint_{\alpha_k} \omega \quad \text{and} \quad B_k \coloneq \oint_{\beta_k} \omega. \]
	Utilizing the bipartite structure, we define the \textit{black periods}:
	\[ A^B_k \coloneq 2\int_{B\alpha_k} \omega \quad \text{and} \quad B^B_k \coloneq 2\int_{B\beta_k} \omega. \]
	Similarly, we define the \textit{white periods}:
	\[ A^W_k \coloneq 2\int_{W\alpha_k} \omega \quad \text{and} \quad B^W_k \coloneq 2\int_{W\beta_k} \omega. \]
\end{definition}

Since $\omega$ is of type $\Diamond$, the integral along the cycle splits additively. The factor 2 ensures that the standard period is the arithmetic mean of the black and white periods:
\[ 2A_k = A_k^B + A_k^W \quad \text{and} \quad 2B_k = B_k^B + B_k^W. \]
Note that the discrete periods of closed differentials depend only on the homology classes of the paths.

It was shown in \cite{BoG17} that the dimension of the space of discrete holomorphic differentials is $2g$, twice the dimension of the smooth case. The splitting into black and white periods allows for a well-posed period problem, summarized in the following proposition \cite{BoG17}:

\begin{proposition}\label{prop:holomorphic_existence}
	For any set of $2g$ complex numbers $A_k^B, A_k^W$ ($1 \leq k \leq g$), there exists a unique discrete holomorphic differential $\omega$ with these specified black and white $a$-periods.
\end{proposition}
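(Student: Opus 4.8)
The statement asserts that the $\C$-linear \emph{black--white $a$-period map}
\[
L\colon \Omega \longrightarrow \C^{2g}, \qquad \omega \longmapsto \big(A_1^B(\omega),\dots,A_g^B(\omega),A_1^W(\omega),\dots,A_g^W(\omega)\big),
\]
where $\Omega$ denotes the space of discrete holomorphic differentials, is a bijection. Since \cite{BoG17} already furnishes $\dim_{\C}\Omega = 2g$, source and target have the same finite dimension, so the plan is to reduce existence and uniqueness simultaneously to the single claim that $L$ is \emph{injective}: a discrete holomorphic differential all of whose black and white $a$-periods vanish is identically zero.

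For injectivity I would run a discrete analogue of the energy argument that proves the corresponding smooth statement. Introduce the discrete Hodge star $*$ on type-$\Diamond$ one-forms (the combinatorial rotation by a right angle, which interchanges black and white edges) and the associated nonnegative energy $\|\omega\|^2 = i\iint_\Sigma \omega\wedge\bar\omega$, where $\wedge$ is Mercat's wedge product of type-$\Diamond$ forms \cite{Me01}. The holomorphicity of $\omega$ is equivalent to the eigenvalue relation $*\omega = -i\omega$, and a short computation then shows that $\|\omega\|^2$ is strictly positive unless $\omega \equiv 0$. Thus it suffices to prove that $\|\omega\|^2 = 0$ whenever all black and white $a$-periods of $\omega$ vanish.

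This is where a \emph{discrete Riemann bilinear relation} enters. Cutting $\Sigma$ along the cycles $\alpha_k,\beta_k$ produces a simply connected fundamental polygon on which the closed form $\omega$ admits a single-valued primitive $\omega = df$; applying the discrete Stokes theorem and collecting the contributions of edges identified in pairs telescopes $\iint_\Sigma \omega\wedge\bar\omega$ into a sum over the periods of $\omega$. Crucially, because $*$ (and hence $\wedge$) exchanges the two colors, this relation couples the \emph{black} periods with the \emph{white} periods across the two homology families, rather than only the averaged standard periods $A_k,B_k$. Consequently the hypothesis $A_k^B(\omega) = A_k^W(\omega) = 0$ for all $k$ annihilates the entire right-hand side, forcing $\|\omega\|^2 = 0$ and hence $\omega \equiv 0$. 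Injectivity of $L$, and therefore the proposition, follows.

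The step I expect to be the main obstacle is the precise formulation and verification of the bilinear relation in the bipartite setting: one must track exactly how the discrete Hodge star swaps black and white edges so that the relation genuinely involves all $2g$ black and white $a$-periods and not merely their $g$ averages $A_k$. This color-crossing is precisely what reconciles the doubling $\dim_\C\Omega = 2g$ with the splitting of the period data into black and white parts; without it the period map would have a $g$-dimensional kernel and the proposition would fail. Secondary care is needed to ensure that the primitive $f$ is single-valued on the cut surface and that the boundary contributions telescope correctly against the edge identifications of the fundamental polygon.
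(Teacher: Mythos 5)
The paper does not actually prove this proposition: it is imported from \cite{BoG17}, with both the dimension count $\dim_{\C}\Omega = 2g$ and the well-posedness of the bipartite period problem explicitly attributed to that reference, so there is no in-paper argument to compare against. Your proposal is correct in outline and is essentially the standard argument (the one behind the cited source): reduce existence-plus-uniqueness to injectivity via the dimension count, then kill a holomorphic differential with vanishing black and white $a$-periods by combining a discrete Riemann bilinear identity with positivity of $i\iint_\Sigma \omega\wedge\bar\omega$ on holomorphic forms. In particular, your identification of the color-crossing in the bilinear pairing --- black periods of one form pairing against white periods of the other, which is what makes all $2g$ period conditions independent rather than only their $g$ averages $A_k$ --- is exactly the mechanism that reconciles the doubled dimension with the doubled period data.

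Two points would need attention to turn the sketch into a complete proof. First, you characterize holomorphicity by $*\omega = -i\omega$, whereas the paper defines it as closedness plus local exactness with respect to a discrete holomorphic function; these are equivalent (on each face $F_Q$, the differentials $df$ of discrete holomorphic functions are precisely the type-$\Diamond$ forms that are multiples of $dz$ in a local chart), and this equivalence is what makes the energy computation work: writing $\omega = p\,dz + q\,d\bar z$ on $F_Q$ gives $i\iint_{F_Q}\omega\wedge\bar\omega = 2\bigl(|p|^2-|q|^2\bigr)\area(F_Q)$, which is indefinite for general type-$\Diamond$ forms and nonnegative only because holomorphicity forces $q=0$. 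Second, the bipartite bilinear identity itself, which you rightly flag as the main obstacle, is the real content of the statement; it is a separately proven theorem in \cite{BoG17}, so as written your argument is a correct reduction to results of that reference rather than a self-contained proof --- which, to be fair, is consistent with how the paper itself treats the proposition.
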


This existence result allows us to define the discrete period matrices.

\begin{definition}\label{def:discrete_period_matrix}
	We define a canonical basis of $2g$ discrete holomorphic differentials:
	\begin{itemize}
		\item Let $\omega_k^B$ ($1 \leq k \leq g$) be the unique differential with black $a_j$-periods $\delta_{jk}$ and vanishing white $a$-periods.
		\item Let $\omega_k^W$ ($1 \leq k \leq g$) be the unique differential with white $a_j$-periods $\delta_{jk}$ and vanishing black $a$-periods.
	\end{itemize}
	We refer to $\{\omega_1^B, \dots, \omega_g^B, \omega_1^W, \dots, \omega_g^W\}$ as the \textit{dual basis of discrete holomorphic differentials}.
	
	The $b$-periods of this basis form the blocks of the period matrix. We define the $(g \times g)$-matrices:
	\begin{align*}
		\Pi^{B,B}_{jk} &\coloneq 2\int_{B\beta_j}\omega^B_k, & \Pi^{W,B}_{jk} &\coloneq 2\int_{W\beta_j}\omega^B_k,\\
		\Pi^{B,W}_{jk} &\coloneq 2\int_{B\beta_j}\omega^W_k, & \Pi^{W,W}_{jk} &\coloneq 2\int_{W\beta_j}\omega^W_k.
	\end{align*}
	
	The \textit{complete discrete period matrix} is the $(2g\times 2g)$-matrix:
	\[ \tilde{\Pi} \coloneq \begin{pmatrix} \Pi^{B,W} & \Pi^{B,B}\\ \Pi^{W,W} & \Pi^{W,B}\end{pmatrix}. \]
	
	The \textit{discrete period matrix} $\Pi$ is defined as the arithmetic mean:
	\[ \Pi \coloneq \frac{1}{2}(\Pi^{B,W} + \Pi^{B,B} + \Pi^{W,W} + \Pi^{W,B}). \]
\end{definition}

\begin{remark}
	Alternatively, $\Pi$ can be defined directly using a set of $g$ discrete differentials characterized by equal black and white $a$-periods (i.e., $A^B_k = A^W_k = \delta_{jk}$). In this case, $\Pi_{jk} = \oint_{\beta_j} \omega_k$.
\end{remark}

The structural properties of these matrices mirrors the continuous theory, as shown in \cite{BoG17} and \cite{G23}:

\begin{proposition}\label{prop:period_matrix}
	The matrices $\Pi$ and $\tilde{\Pi}$ are symmetric, and their imaginary parts are positive definite. Furthermore, the imaginary parts of the off-diagonal blocks $\Pi^{W,B}$ and $\Pi^{B,W}$ (in the complete matrix) are also positive definite.
\end{proposition}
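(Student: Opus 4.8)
The plan is to reproduce, in the bipartite discrete category, the classical derivation of the period relations from Riemann's bilinear identity. The central tool is the discrete wedge product $\omega\wedge\eta$ of two one-forms of type $\Diamond$, a $2$-form supported on the faces $F_Q$ of the medial graph, together with the discrete Stokes theorem of Section~\ref{sec:differential}. Cutting $\Sigma$ along the representatives $\alpha_1,\dots,\alpha_g,\beta_1,\dots,\beta_g$ turns it into a fundamental $4g$-gon $\hat\Sigma$, which is a topological disk; on $\hat\Sigma$ a closed form $\omega$ admits a primitive $f$ with $df=\omega$. Integrating $f\,\eta$ over $\partial\hat\Sigma$ and using that opposite sides are identified, the boundary contributions telescope into the \emph{discrete Riemann bilinear relation}
\[
\iint_\Sigma \omega\wedge\eta \;=\; \sum_{k=1}^g\bigl(A_k(\omega)B_k(\eta)-B_k(\omega)A_k(\eta)\bigr).
\]
Since $\omega$ and $\eta$ are of type $\Diamond$, the integral and all periods split along the bipartite structure of Definition~\ref{def:blackwhiteperiod}, and the relation refines into one in which a \emph{black} period of $\omega$ is always paired with a \emph{white} period of $\eta$, and vice versa. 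This cross-pairing is precisely what produces the block structure of $\tilde{\Pi}$.

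For symmetry I would use that $\iint_\Sigma\omega\wedge\eta=0$ whenever both forms are discrete holomorphic: on each face $F_Q$ the local exactness with respect to a holomorphic function forces $\omega$ and $\eta$ to be discrete multiples of the single local form $dz$, so $\omega\wedge\eta$ is the discrete analogue of $dz\wedge dz$ and vanishes identically. Applying the refined relation to the three pairs $(\omega^B_j,\omega^B_k)$, $(\omega^W_j,\omega^W_k)$, and $(\omega^B_j,\omega^W_k)$ from Definition~\ref{def:discrete_period_matrix}, whose $a$-periods are Kronecker deltas, collapses the right-hand side to a difference of two $b$-period entries. The vanishing of the left-hand side then yields
\[
\Pi^{B,W}=(\Pi^{B,W})^T,\qquad \Pi^{W,B}=(\Pi^{W,B})^T,\qquad \Pi^{B,B}=(\Pi^{W,W})^T,
\]
which is exactly the symmetry of $\tilde{\Pi}$. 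Symmetry of $\Pi$ follows because $\Pi=\tfrac12(\Pi^{B,W}+\Pi^{W,B}+\Pi^{B,B}+\Pi^{W,W})$ is then a sum of symmetric matrices.

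For positivity I would compute the Hermitian energy of a discrete holomorphic form. On each face $\tfrac{i}{2}\,\omega\wedge\overline{\omega}$ equals a strictly positive geometric weight (built from $\rho_Q$ and the Varignon parallelogram $F_Q$) times $|\omega|^2$, so $\tfrac{i}{2}\iint_\Sigma\omega\wedge\overline{\omega}>0$ for $\omega\neq0$; this is the discrete Dirichlet energy. Feeding $\eta=\overline{\omega}$ into the bilinear relation turns this strictly positive number into the Hermitian form associated with $\Im\tilde{\Pi}$ evaluated on the vector of $a$-periods of $\omega$; as these periods range over all of $\C^{2g}$ by Proposition~\ref{prop:holomorphic_existence}, we conclude that $\Im\tilde{\Pi}$ is positive definite. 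The remaining assertions are now purely algebraic: the diagonal blocks $\Pi^{B,W}$ and $\Pi^{W,B}$ are principal $g\times g$ submatrices of $\tilde{\Pi}$, so $\Im\Pi^{B,W}$ and $\Im\Pi^{W,B}$ are positive definite; and evaluating $\Im\tilde{\Pi}$ on the diagonal vectors $(v,v)$ gives $v^{*}(\Im\Pi)v=\tfrac12\,(v,v)^{*}(\Im\tilde{\Pi})(v,v)>0$, so $\Im\Pi$ is positive definite as well.

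The main obstacle is the bipartite doubling. Everything hinges on verifying that the discrete wedge product interacts with the black/white splitting so that the bilinear relation genuinely pairs black periods against white ones; obtaining this refined relation, rather than a naive diagonal one, is what forces the off-diagonal superscript structure of $\tilde{\Pi}$ and is the technical heart of the argument (this is where \cite{BoG17,G23} do the real work). A secondary point is the strict positivity of the face weights in the energy: one must check that $\Re(\rho_Q)>0$ makes the discrete analogue of $\tfrac{i}{2}\,dz\wedge d\overline{z}$ a positive area element on every quadrilateral, so that the energy is genuinely definite and the $a$-periods can be freely prescribed.
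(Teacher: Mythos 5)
The paper does not actually prove this proposition: it is imported verbatim from the cited literature (``as shown in \cite{BoG17} and \cite{G23}''), so there is no internal proof to compare against. Your sketch is a faithful reconstruction of the argument in those references: the discrete wedge product on the faces $F_Q$, the cut along the homology representatives to a fundamental polygon, the discrete Riemann bilinear identity, the vanishing of $\omega\wedge\eta$ for discrete holomorphic forms (which is correct here, since local exactness with respect to a discrete holomorphic function together with the discrete Cauchy--Riemann equation makes $\omega\vert_{F_Q}$ a constant multiple of $dz$ in the local chart), and the Dirichlet-energy argument for positive definiteness; the final reductions (principal submatrices of $\Im\tilde{\Pi}$, and testing $\Im\tilde{\Pi}$ on vectors $(v,v)$ to get $\Im\Pi>0$) are also sound. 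One imprecision worth fixing: the ``naive'' bilinear relation you display first, pairing $A_k(\omega)$ with $B_k(\eta)$, is not what the discrete cutting argument yields and does not hold as stated for type-$\Diamond$ forms; the telescoping of the boundary integral produces \emph{directly} the cross-paired identity in which black periods of $\omega$ meet white periods of $\eta$ and vice versa, so the refined identity should be the starting point rather than a refinement of a relation that is never established. Since you correctly identify that cross-pairing (and the product rule for the discrete $0$-form--$1$-form product underlying it) as the technical heart carried out in \cite{BoG17,G23}, this is a presentational flaw rather than a gap in strategy.
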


For the special class of orthodiagonal surfaces (where $\rho_Q \in \mathds{R}^+$), we observe a specific decoupling of real and imaginary parts.

\begin{lemma}\label{lem:period_orthodiagonal}
	Let $(\Sigma, \Lambda)$ be an orthodiagonal discrete Riemann surface. Then, the matrices $\Pi^{W,B}$ and $\Pi^{B,W}$ are purely imaginary, while the matrices $\Pi^{B,B}$ and $\Pi^{W,W}$ are real.
\end{lemma}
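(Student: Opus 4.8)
The plan is to turn complex conjugation into a symmetry of the space of discrete holomorphic differentials, which is possible precisely when $\rho_Q \in \mathds{R}^+$. For a differential $\omega$ of type $\Diamond$, I define a new differential $\omega^*$ by
\[
\omega^*(e) \coloneq \overline{\omega(e)} \ \text{ if $e$ is a black edge}, \qquad \omega^*(e) \coloneq -\overline{\omega(e)} \ \text{ if $e$ is a white edge}.
\]
This manifestly preserves both the antisymmetry $\omega^*(-e) = -\omega^*(e)$ and the type $\Diamond$ condition, since the two opposite black (resp.\ white) edges of each face $F_Q$ are treated identically. The central claim, from which everything follows, is that in the orthodiagonal case the map $\omega \mapsto \omega^*$ sends discrete holomorphic differentials to discrete holomorphic differentials.

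To establish the claim I would work one quadrilateral at a time. If $\omega\vert_Q = df$ for a discrete holomorphic $f$, set $g \coloneq \bar f$ on the black vertices of $Q$ and $g \coloneq -\bar f$ on the white vertices. Conjugating the discrete Cauchy--Riemann equation $f(w_+) - f(w_-) = i\rho_Q\,(f(b_+) - f(b_-))$ and using $\overline{\rho_Q} = \rho_Q$ shows that $g$ satisfies the same equation, hence $g$ is discrete holomorphic; evaluating $dg$ on the four medial edges of $F_Q$ then returns exactly $\omega^*\vert_Q = dg$, so $\omega^*$ is locally exact with respect to a holomorphic function. Local exactness already forces $\oint_{F_Q}\omega^* = 0$, so for closedness it remains to treat the vertex faces. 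Here I would use that $\partial F_v$ consists entirely of edges of a single colour -- white edges around a black vertex, black edges around a white one -- whence $\oint_{F_v}\omega^* = \pm\,\overline{\oint_{F_v}\omega} = 0$ because $\omega$ is closed. Thus $\omega^*$ is discrete holomorphic.

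Next I would record how $*$ acts on periods. Since conjugation and the sign are applied uniformly along black and white edges, the definitions in Definition~\ref{def:blackwhiteperiod} give
\[
A^B_k(\omega^*) = \overline{A^B_k(\omega)}, \quad A^W_k(\omega^*) = -\overline{A^W_k(\omega)}, \quad B^B_j(\omega^*) = \overline{B^B_j(\omega)}, \quad B^W_j(\omega^*) = -\overline{B^W_j(\omega)}.
\]
Applying the first two identities to the normalizations of Definition~\ref{def:discrete_period_matrix}, the image $(\omega^B_k)^*$ has black $a$-periods $\delta_{jk}$ and vanishing white $a$-periods, exactly like $\omega^B_k$, while $(\omega^W_k)^*$ has vanishing black $a$-periods and white $a$-periods $-\delta_{jk}$, exactly like $-\omega^W_k$. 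The uniqueness statement of Proposition~\ref{prop:holomorphic_existence} then yields the fixed-point relations $(\omega^B_k)^* = \omega^B_k$ and $(\omega^W_k)^* = -\omega^W_k$.

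Finally I would read off each block. For example $\Pi^{B,B}_{jk} = B^B_j(\omega^B_k) = B^B_j\big((\omega^B_k)^*\big) = \overline{B^B_j(\omega^B_k)} = \overline{\Pi^{B,B}_{jk}}$, so $\Pi^{B,B}$ is real, and $\Pi^{W,B}_{jk} = B^W_j\big((\omega^B_k)^*\big) = -\overline{B^W_j(\omega^B_k)} = -\overline{\Pi^{W,B}_{jk}}$, so $\Pi^{W,B}$ is purely imaginary; feeding $(\omega^W_k)^* = -\omega^W_k$ into the analogous computations shows $\Pi^{W,W}$ is real and $\Pi^{B,W}$ is purely imaginary, which is the assertion. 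The one genuinely delicate point -- and the step I expect to demand the most care -- is the verification that $\omega^*$ is holomorphic, because this is exactly where orthodiagonality is indispensable: were $\rho_Q$ not real, conjugating the Cauchy--Riemann equation would produce $\overline{\rho_Q}$ in place of $\rho_Q$ and $*$ would fail to preserve holomorphicity.
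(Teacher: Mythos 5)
Your proof is correct. The paper itself states Lemma~\ref{lem:period_orthodiagonal} without proof (it is imported from the cited literature on discrete period matrices), so there is no in-paper argument to match line by line; but your strategy is exactly the template the paper uses later for Theorem~\ref{th:completeperiodmatrix}: define an antilinear operation on differentials, verify discrete holomorphicity by combining local exactness with a conjugated Cauchy--Riemann computation (this is where the hypothesis on $\rho$ enters, here $\rho_Q\in\R^{+}$), identify the images of the canonical basis $\omega_k^B,\omega_k^W$ through their black and white $a$-periods and the uniqueness statement of Proposition~\ref{prop:holomorphic_existence}, and then read off the four blocks from the $b$-periods. All the delicate points are handled correctly: the sign pattern $g=\bar f$ on black vertices and $g=-\bar f$ on white vertices does reproduce $\omega^*\vert_Q=dg$ on both colours of medial edges, the conjugated Cauchy--Riemann equation closes up precisely because $\overline{\rho_Q}=\rho_Q$, and closedness on the vertex faces follows from the monochromaticity of $\partial F_v$ (white edges around a black vertex, black edges around a white one), so the sign applied there is uniform and the conjugated boundary sum vanishes. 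The resulting fixed-point relations $(\omega_k^B)^*=\omega_k^B$ and $(\omega_k^W)^*=-\omega_k^W$ then give reality of $\Pi^{B,B},\Pi^{W,W}$ and pure imaginarity of $\Pi^{W,B},\Pi^{B,W}$ exactly as you state.
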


In particular, the discrete period matrix $\Pi$ lies in the Siegel upper half-space. In the context of finer discretizations of a fixed polyhedral surface (viewed as a Riemann surface), it has been shown in \cite{G23} that the discrete period matrices converge to their continuous counterparts.
In the following chapter, we will extend this theory to discrete real Riemann surfaces, showing that the discrete period matrices inherit the symmetry properties of the continuous case.

%%%%%%%%%%%%%%%%%%%%%%%%%%%%%%%

\section{Discrete real Riemann surfaces}\label{sec:discrete_real}

The aim of this chapter is to define discrete real Riemann surfaces and develop the discrete theory in strict analogy to the continuous theory presented in Chapter~\ref{sec:continuous}.
Just as a classical real Riemann surface is equipped with an antiholomorphic involution reversing the orientation, we will define a combinatorial involution on the quad-graph that acts on the discrete complex structure in a compatible way.

We begin by defining discrete antiholomorphic involutions in Section~\ref{sec:discrete_involution}, distinguishing between two fundamental types based on the bipartite structure. In Section~\ref{sec:discrete_ovals}, we introduce discrete real ovals and relate their number to the topological type. The core of the chapter, Section~\ref{sec:discrete_homology}, is dedicated to the construction of a symplectic homology basis adapted to the involution, which allows us to derive the symmetry properties of the discrete period matrix. We conclude with illustrative examples in Section~\ref{sec:examples}.

\subsection{Discrete antiholomorphic involution}\label{sec:discrete_involution}

Let $(\Sigma, \Lambda)$ be a discrete Riemann surface with discrete complex structure $\rho$. A discrete antiholomorphic involution $\tau$ is, first and foremost, a combinatorial automorphism of the cell decomposition that reverses orientation.

Since $\tau$ is an involution on the surface $\Sigma$, it must map the cellular decomposition $\Lambda$ to itself. Specifically, $\tau$ acts bijectively on the set of vertices $V(\Lambda)$, mapping edges to edges and quadrilaterals to quadrilaterals.
The orientation-reversing property is encoded locally: if vertices $v_1, v_2, v_3, v_4$ form the boundary of a face $Q$ in counterclockwise order, their images $\tau(v_1), \tau(v_2), \tau(v_3), \tau(v_4)$ must form the boundary of the image face $\tau(Q)$ in \emph{clockwise} order.

Crucially, the bipartiteness of $\Lambda$ implies that such a graph automorphism must either preserve the color classes or swap them. This leads to two distinct topological types of involutions (see Figure~\ref{fig:definition_tau}):
\begin{enumerate}
	\item \textbf{Type 1 (color-preserving):} $\tau$ maps black vertices to black vertices and white vertices to white vertices. Geometrically, this corresponds to a reflection along an edge or across the face of the quad-graph.
	\item \textbf{Type 2 (color-reversing):} $\tau$ swaps black and white vertices. Geometrically, this corresponds to a reflection along a diagonal of a face of the medial graph (connecting the midpoints of two opposite edges of the quadrilateral).
\end{enumerate}

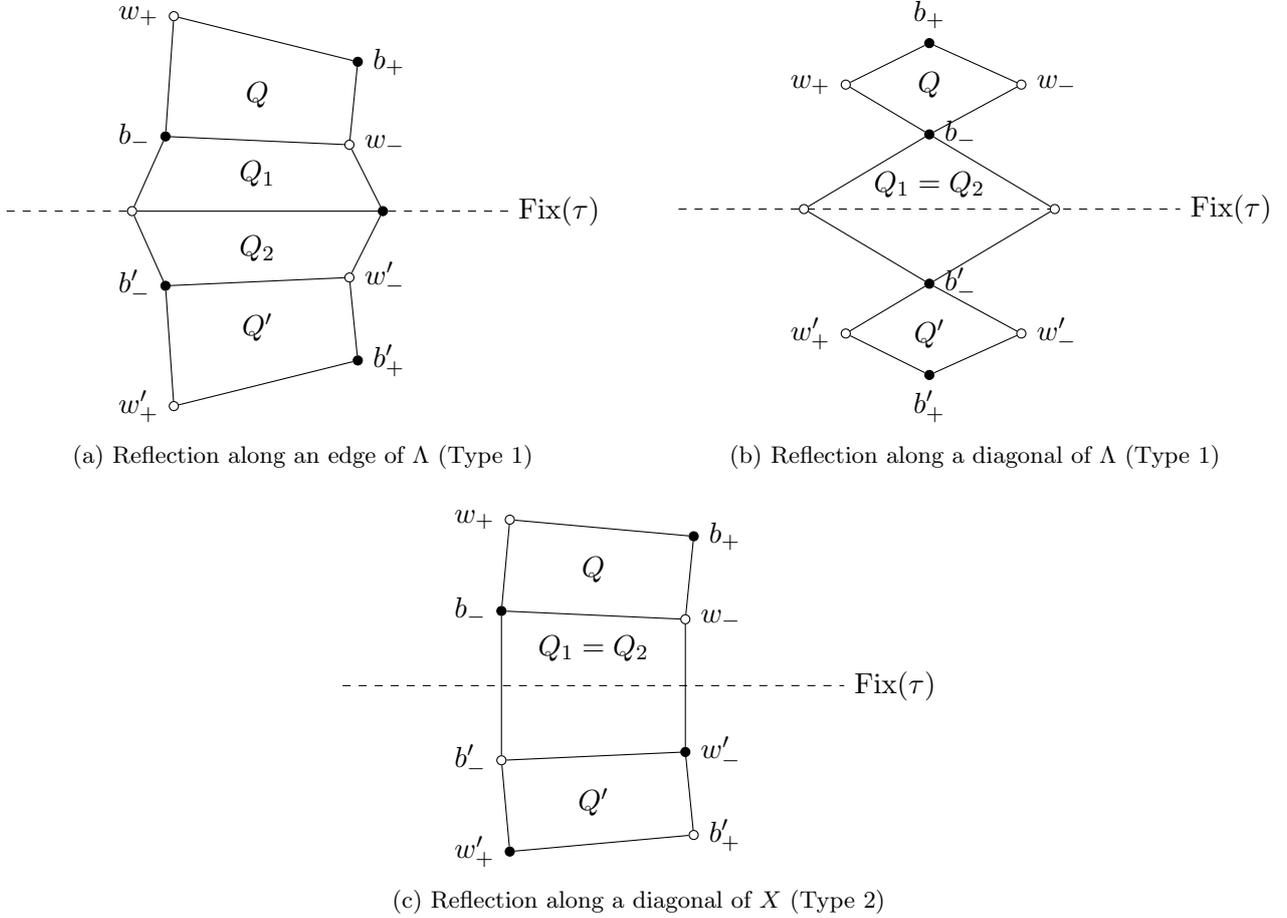
\begin{figure}[htbp]
	\centering
	\subfloat[Reflection along an edge of $\Lambda$ (Type 1)]{
	\begin{tikzpicture}[white/.style={circle,draw=black,fill=white,thin,inner sep=0pt,minimum size=1.2mm},
		black/.style={circle,draw=black,fill=black,thin,inner sep=0pt,minimum size=1.2mm},
		gray/.style={circle,draw=black,fill=gray,thin,inner sep=0pt,minimum size=1.2mm},scale=1.1]
		
		\node[white] (wplus) [label=left:$w_+$] at (-1,2.35) {};
		\node[white] (wminus) [label=right:$w_-$] at (1.1,0.8) {};
		\node[black] (bplus) [label=right:$b_+$] at (1.2,1.8) {};
		\node[black] (bminus) [label=left:$b_-$] at (-1.1,0.9) {};
		
		\draw (wplus) -- (bplus) -- (wminus) -- (bminus) -- (wplus);
		\node at (0,1.4) {$Q$};
		
		\node[white] (wplusp) [label=left:$w^\prime_+$] at (-1,-2.35) {};
		\node[white] (wminusp) [label=right:$w^\prime_-$] at (1.1,-0.8) {};
		\node[black] (bplusp) [label=right:$b^\prime_+$] at (1.2,-1.8) {};
		\node[black] (bminusp) [label=left:$b^\prime_-$] at (-1.1,-0.9) {};
		
		\draw (wplusp) -- (bplusp) -- (wminusp) -- (bminusp) -- (wplusp);
		\node at (0,-1.4) {$Q^\prime$};
		
		\node[black] (b) at (1.5,0) {};
		\node[white] (w) at (-1.5,0) {};
		\draw (b) -- (w);
		\draw (bminus) -- (w) -- (bminusp);
		\draw (wminus) -- (b) -- (wminusp);
		
		\draw[dashed] (-3,0) -- (w);
		\draw[dashed] (b) -- (3,0) node[right] {$\fix(\tau)$};
		
		\node at (0,0.45) {$Q_1$};
		\node at (0,-0.45) {$Q_2$};
	\end{tikzpicture}}
	\qquad
	\subfloat[Reflection along a diagonal of $\Lambda$ (Type 1)]{
	\begin{tikzpicture}[white/.style={circle,draw=black,fill=white,thin,inner sep=0pt,minimum size=1.2mm},
		black/.style={circle,draw=black,fill=black,thin,inner sep=0pt,minimum size=1.2mm},
		gray/.style={circle,draw=black,fill=gray,thin,inner sep=0pt,minimum size=1.2mm},scale=1.1]
		\draw[dashed] (-3,0) -- (3,0) node[right] {$\fix(\tau)$};
		
		\node[white] (wplus) [label=left:$w_+$] at (-1,1.5) {};
		\node[white] (wminus) [label=right:$w_-$] at (1.1,1.5) {};
		\node[black] (bplus) [label=above:$b_+$] at (0,2) {};
		\node[black] (bminus) [label=right:$b_-$] at (0,0.9) {};
		
		\draw (wplus) -- (bplus) -- (wminus) -- (bminus) -- (wplus);
		\node at (0,1.5) {$Q$};
		
		\node[white] (wplusp) [label=left:$w^\prime_+$] at (-1,-1.5) {};
		\node[white] (wminusp) [label=right:$w^\prime_-$] at (1.1,-1.5) {};
		\node[black] (bplusp) [label=below:$b^\prime_+$] at (0,-2) {};
		\node[black] (bminusp) [label=right:$b^\prime_-$] at (0,-0.9) {};
		
		\draw (wplusp) -- (bplusp) -- (wminusp) -- (bminusp) -- (wplusp);
		\node at (0,-1.5) {$Q^\prime$};
		
		\node[white] (wp) at (1.5,0) {};
		\node[white] (w) at (-1.5,0) {};
		\draw (bminus) -- (wp) -- (bminusp) -- (w) -- (bminus);
		
		\node at (0,0.3) {$Q_1=Q_2$};
	\end{tikzpicture}}
	\qquad
	\subfloat[Reflection along a diagonal of $X$ (Type 2)]{
	\begin{tikzpicture}[white/.style={circle,draw=black,fill=white,thin,inner sep=0pt,minimum size=1.2mm},
		black/.style={circle,draw=black,fill=black,thin,inner sep=0pt,minimum size=1.2mm},
		gray/.style={circle,draw=black,fill=gray,thin,inner sep=0pt,minimum size=1.2mm},scale=1.1]
		\draw[dashed] (-3,0) -- (3,0) node[right] {$\fix(\tau)$};
		
		\node[white] (wplus) [label=left:$w_+$] at (-1,2) {};
		\node[white] (wminus) [label=right:$w_-$] at (1.1,0.8) {};
		\node[black] (bplus) [label=right:$b_+$] at (1.2,1.8) {};
		\node[black] (bminus) [label=left:$b_-$] at (-1.1,0.9) {};
		
		\draw (wplus) -- (bplus) -- (wminus) -- (bminus) -- (wplus);
		\node at (0,1.4) {$Q$};
		
		\node[black] (wplusp) [label=left:$w^\prime_+$] at (-1,-2) {};
		\node[black] (wminusp) [label=right:$w^\prime_-$] at (1.1,-0.8) {};
		\node[white] (bplusp) [label=right:$b^\prime_+$] at (1.2,-1.8) {};
		\node[white] (bminusp) [label=left:$b^\prime_-$] at (-1.1,-0.9) {};
		
		\draw (wplusp) -- (bplusp) -- (wminusp) -- (bminusp) -- (wplusp);
		\node at (0,-1.4) {$Q^\prime$};
		
		\draw (bminus) -- (bminusp);
		\draw (wminus) -- (wminusp);
		
		\node at (0,0.45) {$Q_1=Q_2$};
	\end{tikzpicture}}
	\caption[]{Local behavior of discrete antiholomorphic involutions $\tau$. (a) and (b) preserve the bipartition, while (c) swaps black and white vertices.}
	\label{fig:definition_tau}
\end{figure}

To qualify as \emph{antiholomorphic}, $\tau$ must interact with the discrete complex structure $\rho$ in a way that corresponds to complex conjugation in local charts.
\begin{itemize}
	\item \textbf{Motivation for Type 1:} Since $\tau$ preserves the coloring, it maps the black diagonal of $Q$ to the black diagonal of $\tau(Q)$, and similarly for white diagonals. Realizing $\tau$ locally as complex conjugation ($z \mapsto \bar{z}$) yields the condition $\rho_{\tau(Q)} = \overline{\rho}_Q$.
	\item \textbf{Motivation for Type 2:} Since $\tau$ swaps the coloring, it maps the black diagonal of $Q$ to the \emph{white} diagonal of $\tau(Q)$ and vice versa. This exchange inverts the ratio defining the discrete complex structure, leading to the condition $\rho_{\tau(Q)} = 1/\overline{\rho}_Q$.
\end{itemize}

Based on this motivation, we define:

\begin{definition} \label{def:discrete_real_RS}
	A map $\tau : V(\Lambda) \to V(\Lambda)$ is called a \emph{discrete antiholomorphic involution} if it is a graph automorphism satisfying $\tau \circ \tau = \mathrm{id}$, it maps faces to faces (i.e., for every $Q \in F(\Lambda)$, the image $\tau(Q)$ is a quadrilateral in $F(\Lambda)$), and it satisfies one of the following two sets of conditions:
	\begin{enumerate}
		\item[(i)] \textbf{Color-preserving:}
		\begin{itemize}
			\item $\tau(\Gamma) = \Gamma$ and $\tau(\Gamma^*) = \Gamma^*$;
			\item For every $Q \in F(\Lambda)$, $\rho_{\tau(Q)} = \overline{\rho}_{Q}$.
		\end{itemize}
		\item[(ii)] \textbf{Color-reversing:}
		\begin{itemize}
			\item $\tau(\Gamma) = \Gamma^*$ and $\tau(\Gamma^*) = \Gamma$;
			\item For every $Q \in F(\Lambda)$, $\rho_{\tau(Q)} = \frac{1}{\overline{\rho}_{Q}}$.
		\end{itemize}
	\end{enumerate}
	The triple $(\Sigma, \Lambda, \tau)$ is called a \emph{discrete real Riemann surface}.
\end{definition}

\begin{remark}
	The involution $\tau$ naturally extends to the edges and faces of $\Lambda$. Consequently, it induces an involution on the medial graph $X$, mapping vertices $V(X)$ (midpoints of edges of $\Lambda$) to vertices $V(X)$, and edges $E(X)$ to edges $E(X)$.
	
	The distinction between the two types is crucial for the medial graph:
	\begin{itemize}
		\item In Case (i), $\tau$ maps black edges of $X$ to black edges, and white edges to white edges.
		\item In Case (ii), $\tau$ maps black edges of $X$ to white edges, and vice versa.
	\end{itemize}
	This swapping of edge types in Case (ii) will play a significant role when analyzing the action of $\tau$ on the period matrix.
\end{remark}

\subsection{Discrete real ovals}\label{sec:discrete_ovals}

From now on, let $(\Sigma, \Lambda, \tau)$ be a discrete real Riemann surface of genus $g$. We aim to characterize the set $\fix(\tau)$ of fixed points of $\tau$.

Recall that $\tau$ acts on the cellular decomposition $\Lambda$ and induces an involution on the medial graph $X$. The fixed point set depends on whether $\tau$ preserves or reverses the bipartite coloring (Type 1 or Type 2).

\subsubsection*{Local structure of the fixed point set}

\paragraph{Case 1: color-preserving (Type 1).}
If $\tau$ preserves the coloring, it maps black vertices to black and white to white. A fixed point can be:
\begin{itemize}
	\item A vertex $v \in V(\Lambda)$ such that $\tau(v) = v$.
	\item An edge $e \in E(\Lambda)$ such that $\tau(e) = e$ (which implies both endpoints are fixed).
	\item A face $Q \in F(\Lambda)$ such that $\tau(Q) = Q$. Since $\tau$ reverses orientation and preserves colors, it must reflect $Q$ across a diagonal. The fixed points within $Q$ correspond to this diagonal (either connecting two black or two white vertices).
\end{itemize}
Thus, $\fix(\tau)$ consists of vertices, edges, and diagonals of $\Lambda$. Geometrically, this corresponds to reflection along a one-dimensional subgraph embedded in the set of edges and diagonals of $\Lambda$ (see Figure~\ref{fig:definition_tau} (a,b)).

\paragraph{Case 2: color-reversing (Type 2).}
If $\tau$ swaps colors, it cannot fix any vertex or edge of $\Lambda$. However, it may fix elements of the medial graph $X$:
\begin{itemize}
	\item A vertex $x \in V(X)$ (midpoint of an edge of $\Lambda$) is fixed if $\tau$ swaps the endpoints of the corresponding edge of $\Lambda$ and maps the edge to itself.
	\item A face $Q \in F(\Lambda)$ is fixed ($\tau(Q)=Q$) if $\tau$ maps the quadrilateral to itself while swapping colors. Since $\tau$ reverses orientation, it must reflect $Q$ across a bimedian (a line connecting the midpoints of opposite edges). In terms of the medial graph, this corresponds to a diagonal of the face $F_Q$ of $X$.
\end{itemize}
Thus, $\fix(\tau)$ consists of vertices and diagonals of the medial graph $X$ (see Figure~\ref{fig:definition_tau} (c)).

We summarize these observations:

\begin{definition}
	Let $(\Sigma, \Lambda, \tau)$ be a discrete real Riemann surface. The \textit{set of fixed points} $\fix(\tau)$ is defined as the union of the following sets:
	\begin{itemize}
		\item If $\tau$ is of Type 1 (color-preserving): All vertices $v \in V(\Lambda)$ fixed by $\tau$, all edges $e \in E(\Lambda)$ fixed by $\tau$, and all diagonals of faces $Q \in F(\Lambda)$ fixed by $\tau$.
		\item If $\tau$ is of Type 2 (color-reversing): All vertices $x \in V(X)$ fixed by $\tau$ (viewed as midpoints of edges of $\Lambda$), and all diagonals of faces $F_Q$ of the medial graph $X$ corresponding to faces $Q \in F(\Lambda)$ fixed by $\tau$.
	\end{itemize}
\end{definition}

\begin{proposition}\label{prop:fixedpoint2}
	If $\fix(\tau)$ is non-empty, it forms a collection of disjoint closed curves embedded in the surface. These curves lie either on the set of edges and diagonals of $\Lambda$ or on the diagonals of $X$ corresponding to faces of $\Lambda$.
\end{proposition}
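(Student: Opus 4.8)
The plan is to show that $\fix(\tau)$ is a compact $1$-manifold embedded in $\Sigma$; since every compact $1$-manifold without boundary is a finite disjoint union of circles, and since the local case analysis preceding the statement already exhibits $\fix(\tau)$ as a finite union of (closed) edges, face-diagonals, or medial diagonals, this immediately yields the assertion together with the claimed location of the curves. The set $\fix(\tau)$ is closed because $\tau$ is a cellular automorphism and the fixed cells form a subcomplex of a (barycentrically refined) decomposition. Thus the entire content reduces to the \emph{local} statement: near every point $p\in\fix(\tau)$, the set $\fix(\tau)$ is a single arc through $p$ (in particular, no isolated points and no branch points occur), in complete parallel to the smooth Corollary~\ref{cor:fixedpoint}.

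First I would dispose of the easy points. At an interior point of a fixed edge $e\in E(\Lambda)$ (Type~1), of a fixed face-diagonal, or of a fixed medial diagonal (Type~2), the fixed set is, by the very description in Cases~1 and~2, a straight segment, hence locally an arc. The only points requiring care are the $0$-cells of the relevant graph lying in $\fix(\tau)$: a fixed vertex $v\in V(\Lambda)$ in Type~1, or a fixed midpoint $x\in V(X)$ together with the endpoints of fixed medial diagonals in Type~2.

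The key step is the analysis at such a fixed $0$-cell, say a fixed vertex $v$ in Type~1. I would consider the cyclically ordered \emph{star} of $v$, i.e.\ the edges $e_1,\dots,e_n$ incident to $v$ and the faces $Q_1,\dots,Q_n$ between consecutive edges, forming a link circle with $2n$ marked slots. Because $\tau$ fixes $v$ and reverses orientation, it acts on this link circle as an orientation-reversing involution, that is, as a reflection; such a reflection has an axis meeting the circle in exactly two antipodal points. Each of these two fixed directions is realized either by a fixed incident edge $e_i$ (with $\tau(e_i)=e_i$) or by a fixed incident face $Q_i$ (with $\tau(Q_i)=Q_i$, whose fixed locus is the diagonal through $v$). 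Hence exactly two segments of $\fix(\tau)$ emanate from $v$: there are no isolated fixed vertices and no branch points. The identical argument, carried out on the link of a fixed vertex of $X$ (respectively on the star of a fixed face $F_Q$ of the medial graph), handles Type~2, the only difference being that the color-swapping sends black slots of the link to white slots, so that the two fixed directions are now realized by medial diagonals rather than by edges or diagonals of $\Lambda$.

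Assembling these local models shows that $\fix(\tau)$ is a compact embedded $1$-manifold, whence a disjoint union of simple closed curves, lying on the edges and diagonals of $\Lambda$ in Type~1 and on the diagonals of $X$ in Type~2. I expect the reflection-of-the-link computation to be the main obstacle: one must verify that an orientation-reversing cellular involution genuinely restricts to a reflection of each fixed link, so that precisely two fixed directions occur, thereby excluding the isolated fixed points and triple points that have no smooth analogue. A cleaner but less self-contained alternative would be to pass to a continuous (PL) realization of $(\Sigma,\Lambda)$ in which $\tau$ becomes an orientation-reversing topological involution and to invoke Corollary~\ref{cor:fixedpoint}; I would nonetheless prefer the combinatorial argument above, since it keeps the proof within the elementary, discretization-friendly framework adopted throughout.
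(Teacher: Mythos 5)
Your proof is correct, but it takes a genuinely different route from the paper. The paper states Proposition~\ref{prop:fixedpoint2} without a standalone proof: it is offered as a summary of the preceding local case analysis, and the rigorous backing is supplied immediately afterwards by the polyhedral realization of Lemma~\ref{lem:fixedtau} --- exactly the ``less self-contained alternative'' that you mention and then set aside. There, $\tau$ is extended affinely over unit squares to a continuous orientation-reversing involution $\tilde{\tau}$ on a surface $S$ homeomorphic to $\Sigma$, the fixed set of $\tilde{\tau}$ is identified with the geometric realization of $\fix(\tau)$, and the statement (together with the later analogues of Lemma~\ref{lem:12components} and Proposition~\ref{prop:number_ovals}) is transferred from the topological results of Section~\ref{sec:continuous}. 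Your link-reflection argument replaces this transfer by a purely combinatorial count: at each fixed $0$-cell the induced orientation-reversing cellular involution of the link circle is a reflection with exactly two fixed points, and cellularity (edges map to edges, faces to faces, so a fixed point of the reflection cannot fall between an edge-slot and an adjacent face-slot) forces both fixed directions to be realized by a fixed edge or a fixed face; hence exactly two arcs emanate from every fixed $0$-cell, excluding isolated points and branch points. That last parenthetical deserves to be stated explicitly in your write-up, since it is precisely what rules out a reflection axis through arc-midpoints of the link, but it is immediate and your argument is sound. What your approach buys is a self-contained proof of the proposition within the combinatorial framework; what it does not buy is the realization $(S, \tilde{\tau})$ itself, which the paper needs anyway for Lemma~\ref{lem:12components_discrete}, Theorem~\ref{th:number_ovals_discrete}, and Theorem~\ref{th:realhomology_discrete}, so within the paper's overall economy the appeal to Lemma~\ref{lem:fixedtau} costs nothing extra.
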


\begin{definition}
	The disjoint closed curves of $\fix(\tau)$ are called the \emph{discrete real ovals} of $(\Sigma, \Lambda, \tau)$. A discrete real Riemann surface of genus $g$ with $g+1$ discrete real ovals is called a \emph{discrete M-curve}.
\end{definition}

\subsubsection*{Topological properties}

To transfer the topological results (such as Harnack's inequality~\ref{prop:number_ovals}) to the discrete setting, we construct a continuous model.
We realize $(\Sigma, \Lambda)$ as a polyhedral surface $S$ composed of unit squares, glued according to the combinatorics of $\Lambda$. The involution $\tau$ on the vertices $V(\Lambda)$ induces a map on the corners of these squares. We extend this map to a continuous, orientation-reversing involution $\tilde{\tau}: S \to S$ by linear interpolation on each square.

%\begin{figure}[htbp]
%	\centering
%	\input{continuous}
%	\caption{A square of the polyhedral surface $S$ with coordinates. $\tilde{\tau}$ is defined by linear interpolation of the vertex map.}
%	\label{fig:square}
%\end{figure}

Specifically, for a square $Q$ with vertices $v_1, \dots, v_4$, we identify $Q$ with $[0,1]^2$. If $\tau$ maps $Q$ to a square $Q'$, the map $\tilde{\tau}$ is the unique affine map from $Q$ to $Q'$ consistent with the action on vertices.

\begin{lemma}\label{lem:fixedtau}
	The fixed point set of the continuous involution $\tilde{\tau}$ coincides exactly with the geometric realization of the discrete fixed point set $\fix(\tau)$.
\end{lemma}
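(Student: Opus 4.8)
The plan is to exploit the piecewise-affine structure of $\tilde\tau$ and to analyze its fixed points one square at a time, using that $\fix(\tilde\tau) = \bigcup_{Q} (\fix(\tilde\tau) \cap \overline{Q})$ while each restriction $\tilde\tau|_{\overline{Q}}$ is affine. The single structural fact driving the argument is that the group of affine automorphisms of the unit square $[0,1]^2$ is exactly the dihedral group $D_4$ of its Euclidean symmetries: an affine bijection sends extreme points to extreme points and fixes the centroid, which forces it into $D_4$ and in particular rules out any ``shearing'' reflection with an unexpected axis. Consequently every orientation-reversing affine involution of the square is a reflection across one of its two diagonals or one of its two bimedians (the axes through the midpoints of opposite edges), and its fixed point set is precisely that diagonal or bimedian segment. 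I would isolate this as a preliminary observation. Note that orientation-reversal is essential here: the orientation-preserving affine involutions of the square are the identity and the $180^{\circ}$ rotation, the latter fixing only the centroid; it is exactly orientation-reversal that upgrades isolated fixed points to segments.

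First I would dispose of the squares $Q$ with $\tau(Q) \neq Q$. For such a square $\tilde\tau(\overline{Q}) = \overline{\tau(Q)}$, so any fixed point lies in $\overline{Q} \cap \overline{\tau(Q)}$, which by strong regularity is empty, a single shared vertex, or a single shared edge. If it is empty there are no fixed points. If it is a vertex $v$, then $v$ is fixed precisely when $\tau(v) = v$. If it is an edge $e$, then $\tilde\tau$ restricts to an affine involution of the segment $e$, whose fixed set is all of $e$ when $\tau(e) = e$ with both endpoints fixed (the Type~1 ``reflection along an edge,'' Figure~\ref{fig:definition_tau}(a)), and the single midpoint of $e$ when $\tau$ reverses $e$ (the Type~2 situation, in which this midpoint is exactly a fixed vertex of the medial graph $X$). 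In every subcase the fixed points found coincide with the geometric realization of the fixed vertices and fixed edges recorded in $\fix(\tau)$.

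Next I would treat the squares with $\tau(Q) = Q$. Here $\tilde\tau|_{\overline{Q}}$ is an affine involution of $\overline{Q}$, and since $\tilde\tau$ is orientation-reversing the preliminary observation identifies it as one of the four reflections. Matching the induced vertex permutation to the colour behaviour pins down the axis: in the color-preserving case $\tau$ fixes the two vertices of one colour and swaps the two of the other, forcing a reflection across a diagonal of $\Lambda$, whose fixed set is that diagonal; in the color-reversing case $\tau$ interchanges black and white vertices in pairs, forcing a reflection across a bimedian, whose fixed set is the corresponding diagonal of the medial face $F_Q$. These are exactly the diagonals recorded in $\fix(\tau)$ for Type~1 and Type~2, respectively.

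Taking the union over all squares then yields that $\fix(\tilde\tau)$ equals the geometric realization of $\fix(\tau)$; the fixed segments on adjacent fixed squares automatically match along shared edges by the continuity of $\tilde\tau$, reproducing the closed curves of Proposition~\ref{prop:fixedpoint2}. The step I expect to be most delicate is not the per-square classification, which is immediate once $D_4$ is invoked, but the bookkeeping at shared cells: one must verify that the boundary fixed points produced by a swapped pair $\{Q, \tau(Q)\}$ are precisely the fixed edges and vertices (and, in Type~2, the edge-midpoints) in $\fix(\tau)$, and that the two descriptions agree on every shared edge and vertex without omission or double counting. Establishing that the affine automorphism group of the square is exactly $D_4$ is the key input that makes this bookkeeping clean, since it guarantees that no reflection axis other than the diagonals and bimedians can arise.
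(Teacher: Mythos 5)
Your proposal is correct and takes essentially the same approach as the paper's proof: fixed points in the interior of a square exist only when $\tau(Q)=Q$, in which case the color behavior of $\tau$ forces the affine interpolation to be the reflection across a diagonal (Type 1) or a bimedian (Type 2), while boundary fixed points reduce to fixed vertices, fixed edges, and edge midpoints. The paper states these facts more tersely (reading off the reflection directly from the vertex action), whereas you additionally justify them via the $D_4$ classification of affine automorphisms of the square and the strong-regularity analysis of the non-invariant squares.
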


\begin{proof}
	By construction, fixed points of $\tilde{\tau}$ on the boundary of squares correspond to fixed vertices or edges (or midpoints of edges in Type 2). Inside a square $Q$, a fixed point exists if and only if $\tau(Q)=Q$.
	\begin{itemize}
		\item In Type 1, $\tau$ acts on the vertices of $Q$ as a reflection (interchanging $v_2 \leftrightarrow v_4$, fixing $v_1, v_3$). The linear interpolation $\tilde{\tau}$ is then the geometric reflection across the diagonal $v_1v_3$.
		\item In Type 2, $\tau$ acts by swapping colors ($v_1 \leftrightarrow v_2, v_3 \leftrightarrow v_4$). The linear interpolation $\tilde{\tau}$ is the reflection across the bimedian connecting the midpoints of $v_1v_2$ and $v_3v_4$.
	\end{itemize}
	In both cases, the fixed point set of $\tilde{\tau}$ is exactly the geometric locus described in the definition of $\fix(\tau)$.
\end{proof}

Since $\Sigma$ and $S$ are homeomorphic and $\tilde{\tau}$ is a continuous orientation-reversing involution, we can directly apply the topological results from the smooth theory (Section~\ref{sec:continuous}) to our discrete setting.

\begin{lemma}\label{lem:12components_discrete}
	Let $(\Sigma, \Lambda, \tau)$ be a discrete real Riemann surface. Then $\Sigma \setminus \fix(\tau)$ consists of either one or two connected components.
\end{lemma}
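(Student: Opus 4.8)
The plan is to deduce the statement from its smooth counterpart, Lemma~\ref{lem:12components}, by passing to the polyhedral model $S$ and the continuous involution $\tilde\tau$ constructed above. By Lemma~\ref{lem:fixedtau}, the fixed point set $\fix(\tilde\tau)$ coincides with the geometric realization of $\fix(\tau)$, and by Proposition~\ref{prop:fixedpoint2} this set is a disjoint union of simple closed curves. Since $S$ is the geometric realization of $(\Sigma,\Lambda)$, it is homeomorphic to $\Sigma$, and this homeomorphism restricts to a homeomorphism $S \setminus \fix(\tilde\tau) \to \Sigma \setminus \fix(\tau)$. It therefore suffices to count the connected components of $S \setminus \fix(\tilde\tau)$.

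I would then observe that $(S, \tilde\tau)$ satisfies precisely the hypotheses used in the proof of Lemma~\ref{lem:12components}. As noted in the remark following Proposition~\ref{prop:realhomology}, that proof is purely topological: it uses only that the fixed set is a disjoint union of simple closed curves, that the involution is continuous and orientation-reversing, and that near each oval the involution acts as a local reflection interchanging the two adjacent sides. The first property is Proposition~\ref{prop:fixedpoint2}; continuity and orientation-reversal hold by construction; and the local reflection property is exactly the content of the case analysis in the proof of Lemma~\ref{lem:fixedtau}, where $\tilde\tau$ restricts on each fixed square to a reflection across a diagonal (Type~1) or a bimedian (Type~2). Such a reflection swaps the two sides of the fixed curve, so crossing an oval interchanges the local sides.

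With these hypotheses verified, I would run the two-coloring argument of Lemma~\ref{lem:12components} without change. Fixing an oval, color its two local sides $A$ and $B$ and propagate the coloring along paths in $S \setminus \fix(\tilde\tau)$; each crossing of an oval flips the color, since $\tilde\tau$ swaps the two sides. Because $S$ is connected, every component of the complement is reachable and thus receives a color, so there are at most two components. If the two colors coincide the complement is connected; if they are distinct it consists of two components swapped by $\tilde\tau$. In either case $S \setminus \fix(\tilde\tau)$, and hence $\Sigma \setminus \fix(\tau)$, has one or two connected components.

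The main obstacle is not the coloring combinatorics, which transfers verbatim, but verifying that $\tilde\tau$ genuinely acts as a side-swapping reflection along every oval, so that the color is forced to flip at each crossing. This is precisely why the explicit local models of Lemma~\ref{lem:fixedtau} are needed: they guarantee, separately for the color-preserving and color-reversing types, that the orientation-reversing affine interpolation restricts to a genuine reflection exchanging the two sides of the fixed locus. Once this local picture is in place, the global argument is identical to the smooth case.
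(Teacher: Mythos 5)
Your proposal is correct and follows essentially the same route as the paper: the paper also deduces the lemma by passing to the polyhedral realization $S$ with the continuous orientation-reversing involution $\tilde\tau$, identifying $\fix(\tilde\tau)$ with the geometric realization of $\fix(\tau)$ via Lemma~\ref{lem:fixedtau}, and then invoking the purely topological coloring argument of Lemma~\ref{lem:12components}. The only difference is one of presentation: the paper states the transfer in a single sentence, whereas you explicitly verify the hypotheses (disjoint simple closed fixed curves, side-swapping local reflections) before rerunning the coloring argument, which is a careful but equivalent elaboration.
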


\begin{definition}
	A discrete real Riemann surface $(\Sigma, \Lambda, \tau)$ is called
	\begin{itemize}
		\item \emph{dividing} if $\Sigma \setminus \fix(\tau)$ consists of two components (in this case $S / \tilde{\tau}$ is orientable).
		\item \emph{non-dividing} if $\Sigma \setminus \fix(\tau)$ is connected (in this case $S / \tilde{\tau}$ is non-orientable).
	\end{itemize}
\end{definition}

The topological classification of discrete real Riemann surfaces follows directly from the properties of the continuous model constructed above. Since the fixed point set structure and the orientability of the quotient are topological invariants preserved by the homeomorphism between $(\Sigma, \fix(\tau))$ and $(S, \fix(\tilde{\tau}))$, we obtain the discrete analogue of Proposition~\ref{prop:number_ovals}.

\begin{theorem}\label{th:number_ovals_discrete}
	Let $(\Sigma, \Lambda, \tau)$ be a discrete real Riemann surface of genus $g$ and let $k$ be the number of discrete real ovals. Then, the following holds:
	\begin{enumerate}
		\item[(i)] \textbf{Harnack's Inequality:} The number of ovals is bounded by the genus:
		\[ 0 \leq k \leq g+1. \]
		\item[(ii)] If $k=g+1$ (discrete M-curve), then $(\Sigma, \Lambda, \tau)$ is dividing. If $k=0$, then $(\Sigma, \Lambda, \tau)$ is non-dividing.
		\item[(iii)] If $(\Sigma, \Lambda, \tau)$ is dividing, then the number of ovals satisfies the congruence:
		\[ k \equiv g+1 \pmod 2. \]
	\end{enumerate}
\end{theorem}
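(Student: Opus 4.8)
The plan is to reduce the statement entirely to its smooth counterpart, Proposition~\ref{prop:number_ovals}, via the continuous model $(S,\tilde\tau)$ constructed above. The decisive observation, already recorded in the Remark following Proposition~\ref{prop:number_ovals}, is that its proof uses nothing beyond the topology of a closed orientable surface equipped with a continuous orientation-reversing involution; the complex structure plays no role. Hence it suffices to exhibit such a pair whose fixed-point data matches that of $(\Sigma,\Lambda,\tau)$, and then transfer the conclusions along the homeomorphism $S\cong\Sigma$.

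First I would verify that $(S,\tilde\tau)$ satisfies the hypotheses of Proposition~\ref{prop:number_ovals}. Since $S$ is glued from unit squares according to the combinatorics of the cell decomposition $\Lambda$ of $\Sigma$, it is a compact orientable surface homeomorphic to $\Sigma$, and in particular has the same genus $g$. By construction $\tilde\tau$ is a continuous involution, and the orientation-reversing property of $\tau$ on the faces of $\Lambda$ (encoded in Definition~\ref{def:discrete_real_RS}) ensures that each affine building block, and hence $\tilde\tau$ itself, reverses orientation. Thus $(S,\tilde\tau)$ is a genus-$g$ closed orientable surface with a continuous orientation-reversing involution.

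Next I would match the oval counts and the classification. By Lemma~\ref{lem:fixedtau}, the fixed-point set $\fix(\tilde\tau)\subset S$ coincides as a subset with the geometric realization of $\fix(\tau)$. Consequently the two sets have the same number of connected components, so the number of real ovals of $(S,\tilde\tau)$ in the smooth sense equals the number $k$ of discrete real ovals. Moreover, the dividing/non-dividing dichotomy of $(\Sigma,\Lambda,\tau)$ was defined, through Lemma~\ref{lem:12components_discrete}, precisely via the orientability of $S/\tilde\tau$, so it agrees with the smooth classification of $(S,\tilde\tau)$. With these identifications, Proposition~\ref{prop:number_ovals} applied to $(S,\tilde\tau)$ yields all three assertions at once: Harnack's inequality $0\le k\le g+1$ gives (i); the implications for $k=g+1$ and $k=0$ give (ii); and the parity relation $k\equiv g+1 \pmod 2$ in the dividing case gives (iii).

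The only point requiring genuine care is the construction step underlying Lemma~\ref{lem:fixedtau}, namely that $\tilde\tau$ is a globally well-defined continuous involution: the affine pieces on two squares sharing an edge of $\Lambda$ must agree on that shared edge, which holds because an affine map is determined by its vertex images and $\tau$, being a graph automorphism, maps the shared edge's endpoints consistently independently of the square from which it is viewed; and $\tilde\tau\circ\tilde\tau=\mathrm{id}$ follows since the composite affine map fixes all four corners of each square and is therefore the identity there. Once this well-definedness and the global orientation-reversal are secured, everything else is a formal transfer along $S\cong\Sigma$, with no further combinatorial input needed.
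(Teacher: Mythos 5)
Your proposal is correct and matches the paper's own argument: the paper likewise proves Theorem~\ref{th:number_ovals_discrete} by transferring Proposition~\ref{prop:number_ovals} along the polyhedral realization $(S,\tilde\tau)$, using Lemma~\ref{lem:fixedtau} to identify $\fix(\tilde\tau)$ with the realization of $\fix(\tau)$ and the purely topological nature of the smooth proof noted in the remark after Proposition~\ref{prop:realhomology}. Your additional verification that the affine pieces of $\tilde\tau$ glue consistently and compose to the identity is a detail the paper leaves implicit, but it does not change the route.
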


\subsection{Discrete homology basis and discrete period matrices}\label{sec:discrete_homology}

The topological classification of discrete real Riemann surfaces allows us to construct a homology basis adapted to the involution $\tau$. By transferring the results from the continuous theory via the polyhedral model, we obtain the discrete analogue of Proposition~\ref{prop:realhomology}.

\begin{theorem}\label{th:realhomology_discrete}
	Let $(\Sigma, \Lambda, \tau)$ be a discrete real Riemann surface of genus $g$ with $k$ discrete real ovals. Then, there exists a symplectic basis $(a_1, \ldots, a_g, b_1, \ldots, b_g)$ of $H_1(X,\Z)$ such that the induced action of $\tau$ on homology is given by:
	\begin{align*}
		\tau(a_i) &= a_i \quad \text{for } 1 \leq i \leq g,\\
		\tau(b_i) &= \sum_{j=1}^g h_{ji} a_j - b_i \quad \text{for } 1 \leq i \leq g,
	\end{align*}
	where $H = (h_{ji})$ is a symmetric matrix with entries in $\{0,1\}$. Furthermore:
	\begin{enumerate}
		\item[(i)] If $(\Sigma, \Lambda, \tau)$ is dividing, then $\mathrm{diag}(H) = 0$ and $\mathrm{rank}(H) = g + 1 - k$.
		\item[(ii)] If $(\Sigma, \Lambda, \tau)$ is non-dividing and $k \neq 0$, then $\mathrm{diag}(H) = 1$ and $\mathrm{rank}(H) = g + 1 - k$.
		\item[(iii)] If $(\Sigma, \Lambda, \tau)$ is non-dividing and $k = 0$, then $\mathrm{diag}(H) = 0$ and
		\[
		\mathrm{rank}(H) =
		\begin{cases}
			g & \text{if } g \text{ is even}, \\
			g - 1 & \text{if } g \text{ is odd}.
		\end{cases}
		\]
	\end{enumerate}
\end{theorem}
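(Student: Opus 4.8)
The plan is to establish Theorem~\ref{th:realhomology_discrete} by a direct transfer from the smooth result Proposition~\ref{prop:realhomology}, using the polyhedral realization $S$ of $(\Sigma,\Lambda)$ together with the continuous involution $\tilde\tau$ constructed in Section~\ref{sec:discrete_ovals}. The central observation is that all of the content of Proposition~\ref{prop:realhomology}---the existence of the adapted symplectic basis, the form of the $\tau$-action, and the three case statements relating $\mathrm{diag}(H)$ and $\mathrm{rank}(H)$ to the topological type and the oval count $k$---is purely topological, as emphasized in the remark following that proposition. Since $S$ is a closed oriented topological surface of genus $g$ and $\tilde\tau$ is a continuous orientation-reversing involution whose fixed-point set consists of $k$ disjoint simple closed curves (Lemma~\ref{lem:fixedtau}), Proposition~\ref{prop:realhomology} applies verbatim to the pair $(S,\tilde\tau)$ and furnishes a symplectic basis of $H_1(S,\Z)$ with exactly the stated properties.

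First I would fix the identification of the discrete homology with the topological homology of the model. The homology group $H_1(X,\Z)$ of the medial graph, in which the theorem asks for a basis, must be canonically isomorphic to $H_1(S,\Z)$; I would verify this by noting that the medial graph $X$ is embedded in $S$ (its vertices are edge-midpoints, its edges lie inside the squares) and that $X$ is a deformation retract of $S$ minus the $0$- and $2$-cells in the appropriate sense, so that cycles on $X$ represent all homology classes and the intersection pairing on $X$-cycles agrees with the topological intersection form on $S$. This step ensures that ``symplectic basis of $H_1(X,\Z)$'' and ``symplectic basis of $H_1(S,\Z)$'' mean the same thing, and that the discrete involution $\tau$ acting on $X$-cycles is intertwined with the action of $\tilde\tau$ on $H_1(S,\Z)$ under this isomorphism.

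Next I would invoke the topological classification already transferred in Theorem~\ref{th:number_ovals_discrete} and the dividing/non-dividing dichotomy of Lemma~\ref{lem:12components_discrete} to place $(S,\tilde\tau)$ into one of the three cases of Proposition~\ref{prop:realhomology}. In each case the smooth proposition produces a symplectic basis $(a_1,\dots,a_g,b_1,\dots,b_g)$ of $H_1(S,\Z)$ with $\tilde\tau(a_i)=a_i$ and $\tilde\tau(b_i)=\sum_j h_{ji}a_j-b_i$, where $H$ is symmetric with $\{0,1\}$ entries and the prescribed $\mathrm{diag}(H)$, $\mathrm{rank}(H)$. Pulling these cycles back through the isomorphism $H_1(S,\Z)\cong H_1(X,\Z)$---and, if desired, representing each class by an actual closed edge-path on $X$ in its homotopy class---yields the required discrete basis, with the $\tau$-action and the case-by-case values of $\mathrm{diag}(H)$ and $\mathrm{rank}(H)$ inherited unchanged.

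The main obstacle I anticipate is not the topological transfer itself, which is essentially formal once the homology identification is secured, but rather the careful justification that the intersection pairing computed combinatorially on $X$ (as an algebraic count of crossings of medial cycles, as previewed in the remark in Section~\ref{sec:homology}) coincides with the topological intersection form on $S$, and that $\tau$ on $X$ corresponds exactly to $\tilde\tau$ on $S$ including the orientation-reversal. I would handle this by checking local compatibility face-by-face: in each square of $S$ the affine map $\tilde\tau$ restricts to the combinatorial action of $\tau$ on the incident medial vertices and edges (using Lemma~\ref{lem:fixedtau}), and the induced map on $H_1$ is determined by these local data. A secondary subtlety, worth a remark, is the Type~2 case, where $\tau$ swaps black and white edges of $X$; one must confirm that the isomorphism $H_1(X,\Z)\cong H_1(S,\Z)$ is insensitive to this color-swap so that the antisymplectic property $\mathrm{int}(\tau\phi,\tau\psi)=-\mathrm{int}(\phi,\psi)$ underlying the whole construction continues to hold, which follows because $\tilde\tau$ reverses the orientation of $S$ in both types.
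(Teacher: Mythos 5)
Your proposal follows essentially the same route as the paper's own proof: both transfer Proposition~\ref{prop:realhomology} to the discrete setting via the polyhedral model $(S,\tilde\tau)$, identify $H_1(X,\Z)\cong H_1(S,\Z)$ through the deformation-retract property of the medial graph, and use the fact that the combinatorial $\tau$ and the continuous $\tilde\tau$ induce the same map on homology. Your additional care about the compatibility of the intersection pairing and the Type~2 color-swap is a sound elaboration of details the paper states only briefly, but it does not constitute a different argument.
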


\begin{proof}
	As established in Section~\ref{sec:discrete_ovals}, the discrete real Riemann surface $(\Sigma, \Lambda, \tau)$ is homeomorphic to the polyhedral surface $(S, \tilde{\tau})$. The medial graph $X$ embeds naturally into $S$ as a deformation retract, implying $H_1(X, \Z) \cong H_1(S, \Z)$. Moreover, the combinatorial involution $\tau$ on $X$ induces the same map on homology as the continuous involution $\tilde{\tau}$ on $S$. Proposition~\ref{prop:realhomology} relies purely on topological invariants, namely the orientability of the quotient and the number of fixed components. Thus, we can directly apply it to $(S, \tilde{\tau})$ to obtain the desired basis for $H_1(X, \Z)$ and the properties of the matrix $H$.
\end{proof}

To relate the action of $\tau$ on homology to the period matrix, we analyze the pullback of discrete differential forms.

\begin{definition}
	Let $\omega$ be a discrete one-form of type $\Diamond$. The \textit{pullback} $\tau^*\omega$ is the discrete one-form defined on an oriented edge $e$ of the medial graph $X$ by:
	\[ (\tau^*\omega)(e) \coloneq \omega(\tau(e)). \]
\end{definition}

Specifically, we have the following integral relation for any path $\gamma$ on $X$:
\begin{equation}\label{eq:integral}
	\int_{\gamma} \tau^*\omega = \int_{\tau(\gamma)} \omega.
\end{equation}

We now derive the symmetry properties of the discrete period matrices.
Recall that $\tilde{\Pi}$ denotes the \emph{complete discrete period matrix} containing all bipartite periods, whereas $\Pi$ is the \emph{discrete period matrix} obtained by averaging, which corresponds to the classical object.
The result depends crucially on whether $\tau$ preserves or reverses the bipartite coloring of the quad-graph.

\begin{theorem} \label{th:completeperiodmatrix}
	Let $(\Sigma, \Lambda, \tau)$ be a discrete real Riemann surface. Let $\tilde{\Pi}$ be its complete discrete period matrix with respect to the homology basis from Theorem~\ref{th:realhomology_discrete}, and let $H$ be the matrix defined therein.
	\begin{enumerate}
		\item[(i)] \textbf{Type 1 (color-preserving):} If $\tau$ preserves the bipartite coloring (i.e., $\rho_{\tau(Q)} = \overline{\rho}_{Q}$), then:
		\[ 2 \Re(\Pi^{B,B}) = H = 2 \Re(\Pi^{W,W}) \quad \text{and} \quad \Re(\Pi^{W,B}) = 0 = \Re(\Pi^{B,W}). \]
		Consequently, the complete period matrix takes the form:
		\[ \tilde{\Pi} = \begin{pmatrix}
			i \Pi_1 & i \Pi_2 + \frac{1}{2} H \\
			i \Pi_2^T + \frac{1}{2} H & i \Pi_3
		\end{pmatrix}, \]
		where $\Pi_1, \Pi_2, \Pi_3 \in \R^{g \times g}$ are real matrices.
		\item[(ii)] \textbf{Type 2 (color-reversing):} If $\tau$ reverses the bipartite coloring (i.e., $\rho_{\tau(Q)} = 1/\overline{\rho}_{Q}$), then:
		\[ \Pi^{B,B} + \overline{\Pi}^{W,W} = H \quad \text{and} \quad \Pi^{W,B} + \overline{\Pi}^{B,W} = 0. \]
	\end{enumerate}
\end{theorem}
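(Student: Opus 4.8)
The plan is to mirror the smooth argument of Proposition~\ref{prop:matrix_smooth} at the level of the medial graph $X$, replacing the identity $\overline{\tau^*\omega_j}=\omega_j$ by its bipartite refinement. The engine of the proof is the antilinear operation $\omega\mapsto\overline{\tau^*\omega}$ acting on discrete holomorphic differentials of type $\Diamond$.

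First I would establish the key lemma: this operation maps discrete holomorphic differentials to discrete holomorphic differentials. Closedness is immediate, since $\oint_{\partial F}\overline{\tau^*\omega}=\overline{\oint_{\tau(\partial F)}\omega}=0$, as $\tau$ sends faces of $X$ to faces of $X$ and $\omega$ is closed; the type-$\Diamond$ property is preserved because $\tau$ maps opposite edges of a face $F_Q$ to opposite edges of $F_{\tau(Q)}$. The substantive point is local exactness with respect to a holomorphic function: writing $\omega|_{\tau(Q)}=df$, I set $g:=\overline{f\circ\tau}$ on $Q$ and verify both that $dg=\overline{\tau^*\omega}$ on $Q$ and that $g$ satisfies the discrete Cauchy-Riemann equation. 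Here the compatibility condition on $\rho$ enters decisively: in Type 1 the orientation reversal of $\tau$ together with $\rho_{\tau(Q)}=\overline{\rho}_Q$ turns the conjugated Cauchy-Riemann equation on $\tau(Q)$ into the one on $Q$, while in Type 2 the additional color swap sends the black diagonal to the white diagonal, inverting the defining ratio and being compensated exactly by $\rho_{\tau(Q)}=1/\overline{\rho}_Q$. This verification — that conjugate pullback preserves discrete holomorphicity — is where the antiholomorphic nature of $\tau$ is used, and I expect it to be the main obstacle, since it demands careful bookkeeping of vertex labels under the orientation-reversing (and, in Type 2, color-reversing) map.

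Next I would pin down the action of this operation on the dual basis $\{\omega^B_k,\omega^W_k\}$ by computing $a$-period normalizations, using $\tau(a_i)=a_i$ from Theorem~\ref{th:realhomology_discrete}. Because the black and white periods are homology invariants of closed type-$\Diamond$ forms — a fact I would record separately, noting that the boundary of a vertex face $F_v$ consists entirely of edges of a single color, while on a quadrilateral face $F_Q$ the type-$\Diamond$ condition forces the black and white boundary contributions to vanish independently — the normalizing periods transform cleanly. In Type 1, color preservation shows $\overline{\tau^*\omega^B_k}$ again has black $a$-periods $\delta_{jk}$ and vanishing white $a$-periods, so uniqueness (Proposition~\ref{prop:holomorphic_existence}) gives $\overline{\tau^*\omega^B_k}=\omega^B_k$ and likewise $\overline{\tau^*\omega^W_k}=\omega^W_k$. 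In Type 2 the color swap exchanges the black and white $a$-periods, whence $\overline{\tau^*\omega^B_k}=\omega^W_k$ and $\overline{\tau^*\omega^W_k}=\omega^B_k$.

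Finally I would compute the blocks. For each block I take complex conjugates, replace $\overline{\omega^B_k}$ (respectively $\overline{\omega^W_k}$) by the appropriate pullback from the previous step, convert $\int_{B\beta_j}\tau^*(\cdot)$ into $\int_{\tau(B\beta_j)}(\cdot)$ via \eqref{eq:integral}, and use that $\tau$ preserves colors (Type 1) or swaps them (Type 2) to identify $\tau(B\beta_j)$ as the black, respectively white, part of a cycle representing $\tau(b_j)=\sum_l h_{lj}a_l-b_j$. Homology invariance of the colored periods then reduces the computation to a finite linear combination of the known $a$- and $b$-periods of the basis differentials. In Type 1 this yields $2\Re(\Pi^{B,B})=H=2\Re(\Pi^{W,W})$ together with $\Re(\Pi^{W,B})=0=\Re(\Pi^{B,W})$; combining these real-part identities with the symmetry of $\tilde{\Pi}$ from Proposition~\ref{prop:period_matrix} (which forces $\Pi^{B,B}=(\Pi^{W,W})^T$ and makes $\Pi^{B,W},\Pi^{W,B}$ symmetric) produces exactly the stated block form with $\Pi_1,\Pi_2,\Pi_3$ real. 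In Type 2 the same computation, now with $\overline{\omega^B_k}$ pulled back to $\omega^W_k$, gives $\Pi^{B,B}+\overline{\Pi}^{W,W}=H$ and $\Pi^{W,B}+\overline{\Pi}^{B,W}=0$, completing the proof.
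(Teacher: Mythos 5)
Your proposal is correct and follows essentially the same route as the paper's proof: you introduce the conjugate pullback $\omega\mapsto\overline{\tau^*\omega}$, show it preserves discrete holomorphicity via the local functions $g=\overline{f\circ\tau}$ and the compatibility condition on $\rho$, identify its action on the dual basis through $a$-period normalization and the uniqueness in Proposition~\ref{prop:holomorphic_existence} (fixing $\omega_k^B,\omega_k^W$ in Type 1, swapping them in Type 2), and then extract the block identities from the homology relation $\tau(b_i)=\sum_j h_{ji}a_j - b_i$. Your explicit recording of the homology invariance of the colored periods and the derivation of the final block form from the symmetry of $\tilde{\Pi}$ spell out details the paper leaves implicit, but the argument is the same.
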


\begin{proof}
	The proof proceeds in analogy to the proof of Proposition~\ref{prop:matrix_smooth}, utilizing the duality of the basis. Let $\{\omega_k^B,\omega_k^W \mid 1 \leq k \leq g\}$ be the basis of discrete holomorphic differentials dual to the homology basis given in Theorem~\ref{th:realhomology_discrete}. Let $h_{ij}$ denote the entries of $H$.
	
	(i) \textbf{Type 1 (color-preserving):} Assume that $\tau$ preserves the bipartite coloring. We define the discrete one-form $\eta_j^B \coloneq \overline{\tau^* \omega_j^B}$.
	
	First, we verify that $\eta_j^B$ is discrete holomorphic. Closedness follows immediately from the closedness of $\omega_j^B$ and the linearity of the pullback and conjugation. For the second condition, we must show that $\eta_j^B$ is locally exact with respect to a discrete holomorphic function.
	Let $Q \in F(\Lambda)$ be a quadrilateral. Since $\omega_j^B$ is discrete holomorphic, there exists a discrete holomorphic function $f$ on $\tau(Q)$ such that $\omega_j^B|_{\tau(Q)} = df$. We define the function $g$ on the vertices of $Q$ by $g(v) \coloneqq \overline{f(\tau(v))}$. Then, for any edge $e=(u,v)$ of the medial graph inside $Q$ (connecting edges of $\Lambda$), we have:
	\[
	\eta_j^B(e) = \overline{\omega_j^B(\tau(e))} = \overline{df(\tau(e))} = \overline{f(\tau(v)) - f(\tau(u))} = g(v) - g(u) = dg(e).
	\]
	Thus, $\eta_j^B|_Q = dg$. It remains to show that $g$ satisfies the discrete Cauchy-Riemann equations on $Q$. Let $b_-, w_-, b_+, w_+$ be the vertices of $Q$ in counterclockwise order. Since $\tau$ reverses orientation, the vertices $\tau(b_-), \tau(w_-), \tau(b_+), \tau(w_+)$ of $\tau(Q)$ are ordered clockwise. The discrete holomorphic function $f$ satisfies the Cauchy-Riemann equation on $\tau(Q)$ with respect to the standard counterclockwise ordering. Traversing $\tau(Q)$ counterclockwise effectively swaps the relative positions of the white vertices, implying:
	\[
	f(\tau(w_-)) - f(\tau(w_+)) = i \rho_{\tau(Q)} (f(\tau(b_+)) - f(\tau(b_-))).
	\]
	Rearranging and taking the complex conjugate yields:
	\[
	\overline{f(\tau(w_+)) - f(\tau(w_-))} = \overline{- i \rho_{\tau(Q)} (f(\tau(b_+)) - f(\tau(b_-)))}.
	\]
	Using the Type 1 condition $\overline{\rho_{\tau(Q)}} = \rho_Q$, we obtain:
	\[
	g(w_+) - g(w_-) = i \rho_Q (g(b_+) - g(b_-)).
	\]
	Thus, $g$ is discrete holomorphic on $Q$, proving that $\eta_j^B$ is a discrete holomorphic differential.
	
	Since $\tau(a_i)=a_i$, the periods of $\eta_j^B$ are:
	\begin{align*}
		2\int_{Ba_i} \eta_j^B &= \overline{2\int_{\tau(Ba_i)} \omega_j^B} = \overline{2\int_{Ba_i} \omega_j^B} = \overline{\delta_{ij}} = \delta_{ij}, \\
		2\int_{Wa_i} \eta_j^B &= \overline{2\int_{\tau(Wa_i)} \omega_j^B} = \overline{2\int_{Wa_i} \omega_j^B} = 0.
	\end{align*}
	By the uniqueness of the canonical basis (Proposition~\ref{prop:holomorphic_existence}), we obtain $\eta_j^B = \omega_j^B$.
	Using the relation $\tau(b_i) = \sum h_{ki} a_k - b_i$, we calculate the $b$-periods:
	\begin{align*}
		\Pi_{ij}^{B,B} &= 2 \int_{Bb_i} \omega_j^B = 2 \int_{Bb_i} \overline{\tau^*\omega_j^B} = \overline{2 \int_{\tau(Bb_i)} \omega_j^B} \\
		&= \overline{2 \sum_{k=1}^{g} h_{ki} \int_{Ba_k} \omega_j^B - 2 \int_{Bb_i} \omega_j^B}
		= \sum_{k=1}^g h_{ki} \delta_{kj} - \overline{\Pi}_{ij}^{B,B} = h_{ij} - \overline{\Pi}_{ij}^{B,B}.
	\end{align*}
	For the off-diagonal block $\Pi^{W,B}$, recalling that the white $a$-periods of $\omega_j^B$ vanish, we get:
	\begin{align*}
		\Pi_{ij}^{W,B} &= 2 \int_{Wb_i} \omega_j^B = \overline{2 \int_{\tau(Wb_i)} \omega_j^B}
		= \overline{2 \sum_{k=1}^{g} h_{ki} \underbrace{\int_{Wa_k} \omega_j^B}_{0} - 2 \int_{Wb_i} \omega_j^B}
		= - \overline{\Pi}_{ij}^{W,B}.
	\end{align*}
	Using $h_{ji}=h_{ij}$, we conclude $2 \Re(\Pi^{B,B}) = H$ and $\Re(\Pi^{W,B}) = 0$.
	Analogously, defining $\eta_j^W \coloneqq \overline{\tau^*\omega_j^W}$ leads to $\eta_j^W = \omega_j^W$, yielding $2 \Re(\Pi^{W,W}) = H$ and $\Re(\Pi^{B,W}) = 0$.
	
	(ii) \textbf{Type 2 (color-reversing):} Assume that $\tau$ does not preserve the bipartite coloring. We define $\eta_j^B \coloneqq \overline{\tau^*\omega_j^W}$.
	
	Using the same logic as in (i), we establish that $\eta_j^B$ is discrete holomorphic. Locally, if $\omega_j^W|_{\tau(Q)} = df$, then $\eta_j^B|_Q = dg$ with $g = \overline{f \circ \tau}$. The check for the Cauchy-Riemann equations differs slightly: $\tau$ swaps black and white vertices, so the black diagonal of $Q$ maps to the white diagonal of $\tau(Q)$. Combined with the orientation reversal and the Type 2 condition $\rho_{\tau(Q)} = 1/\overline{\rho}_Q$, the Cauchy-Riemann relation for $f$ transforms into the correct relation for $g$. This transformation effectively swaps the role of the diagonals and inverts the coefficient.
	
	The period calculation yields:
	\begin{align*}
		2\int_{Ba_i} \eta_j^B &= \overline{2\int_{\tau(Ba_i)} \omega_j^W} = \overline{2\int_{Wa_i} \omega_j^W} = \overline{\delta_{ij}} = \delta_{ij},\\
		2\int_{Wa_i} \eta_j^B &= \overline{2\int_{\tau(Wa_i)} \omega_j^W} = \overline{2\int_{Ba_i} \omega_j^W} = 0.
	\end{align*}
	Thus $\eta_j^B = \omega_j^B$, meaning $\omega_j^B = \overline{\tau^* \omega_j^W}$.
	Calculating the $b$-periods:
	\begin{align*}
		\Pi_{ij}^{B,B} &= 2 \int_{Bb_i} \omega_j^B = \overline{2 \int_{\tau(Bb_i)} \omega_j^W}
		= \overline{2 \sum_{k=1}^{g} h_{ki}\int_{Wa_k}\omega_j^W - 2 \int_{Wb_i} \omega_j^W}
		= h_{ij} - \overline{\Pi}_{ij}^{W,W}.
	\end{align*}
	Similarly, $\Pi_{ij}^{W,B} = -\overline{\Pi}_{ij}^{B,W}$.
	This implies $\Pi^{B,B} + \overline{\Pi}^{W,W} = H$ and $\Pi^{W,B} + \overline{\Pi}^{B,W} = 0$.
\end{proof}

Despite the structural differences in the complete period matrix $\tilde{\Pi}$, the \emph{discrete period matrix} $\Pi$ exhibits the exact same symmetry as in the continuous case.

\begin{corollary}\label{cor:periodmatrix}
	For any discrete real Riemann surface, the discrete period matrix decomposes as
	\[ \Pi = \frac{1}{2} H + i T, \]
	where $H$ is the topological matrix from Theorem~\ref{th:realhomology_discrete} and $T$ is a real symmetric matrix.
\end{corollary}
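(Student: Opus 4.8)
The plan is to read off the real part of $\Pi$ directly from the block relations of Theorem~\ref{th:completeperiodmatrix} and to deduce the symmetry of $T$ from the general symmetry of the discrete period matrix. Recall from Definition~\ref{def:discrete_period_matrix} that
\[
\Pi = \tfrac{1}{2}\left(\Pi^{B,W} + \Pi^{B,B} + \Pi^{W,W} + \Pi^{W,B}\right),
\]
so it suffices to compute $\Re(\Pi) = \tfrac12\bigl(\Re(\Pi^{B,W}) + \Re(\Pi^{B,B}) + \Re(\Pi^{W,W}) + \Re(\Pi^{W,B})\bigr)$ and then to set $T \coloneq \Im(\Pi)$.

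First I would treat the color-preserving case (Type 1). Theorem~\ref{th:completeperiodmatrix}(i) yields $2\Re(\Pi^{B,B}) = H = 2\Re(\Pi^{W,W})$ together with $\Re(\Pi^{W,B}) = \Re(\Pi^{B,W}) = 0$. Substituting these into the averaged expression, each diagonal block contributes $\tfrac12 H$ while the off-diagonal blocks contribute nothing, so $\Re(\Pi) = \tfrac12\bigl(\tfrac12 H + \tfrac12 H\bigr) = \tfrac12 H$.

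Next I would handle the color-reversing case (Type 2), where Theorem~\ref{th:completeperiodmatrix}(ii) gives $\Pi^{B,B} + \overline{\Pi}^{W,W} = H$ and $\Pi^{W,B} + \overline{\Pi}^{B,W} = 0$. Since $H$ is real and $\Re(\overline{M}) = \Re(M)$ for any matrix $M$, taking real parts converts these into $\Re(\Pi^{B,B}) + \Re(\Pi^{W,W}) = H$ and $\Re(\Pi^{W,B}) + \Re(\Pi^{B,W}) = 0$. Feeding these two identities into the averaged expression again produces $\Re(\Pi) = \tfrac12(H + 0) = \tfrac12 H$, so both types yield the same real part.

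Finally, with $T = \Im(\Pi)$ real by construction, the decomposition $\Pi = \tfrac12 H + iT$ follows immediately. For the symmetry of $T$, I would invoke Proposition~\ref{prop:period_matrix}, which guarantees that $\Pi$ is symmetric; as $H$ is symmetric by Theorem~\ref{th:realhomology_discrete}, the difference $iT = \Pi - \tfrac12 H$ is symmetric, and hence so is $T$. The argument is essentially mechanical once Theorem~\ref{th:completeperiodmatrix} is available; the only step requiring attention is the complex conjugation appearing in the Type 2 relations, where one must observe that passing to real parts removes the conjugation and merges the black and white diagonal blocks into the single real matrix $H$.
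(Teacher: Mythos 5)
Your proof is correct and follows essentially the same route as the paper: average the four blocks, apply the Type~1 and Type~2 relations of Theorem~\ref{th:completeperiodmatrix}, and use $\Re(\overline{M})=\Re(M)$ to remove the conjugations in the color-reversing case. Your explicit justification that $T$ is symmetric (via Proposition~\ref{prop:period_matrix} and the symmetry of $H$) is a small addition the paper's proof leaves implicit, but it does not change the argument.
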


\begin{proof}
	By definition, $2\Pi = \Pi^{B,B} + \Pi^{W,W} + \Pi^{B,W} + \Pi^{W,B}$.
	Summing the relations from Theorem~\ref{th:completeperiodmatrix}:
	\begin{itemize}
		\item Case (i): $2\Re(2\Pi) = H + H + 0 + 0 = 2H$.
		\item Case (ii): $2\Re(2\Pi) = \Re(\Pi^{B,B} + \overline{\Pi}^{W,W} + \Pi^{B,W} + \overline{\Pi}^{W,B}) = \Re(H + 0) = H$ (since $H$ is real).
	\end{itemize}
	In both cases, $\Re(\Pi) = \frac{1}{2}H$.
\end{proof}

Finally, we consider the special case of orthodiagonal surfaces (where $\rho$ is real). Recall from Lemma~\ref{lem:period_orthodiagonal} that in this case, the blocks $\Pi^{W,B}$ and $\Pi^{B,W}$ are purely imaginary, while $\Pi^{B,B}$ and $\Pi^{W,W}$ are real.
Combining this with Theorem~\ref{th:completeperiodmatrix}, we obtain the general structure for orthodiagonal surfaces.

\begin{corollary}\label{cor:matrix_real_structure}
	Let $(\Sigma, \Lambda, \tau)$ be an orthodiagonal discrete real Riemann surface.
	\begin{enumerate}
		\item[(i)] If $\tau$ preserves the bipartite coloring, then the complete discrete period matrix takes the form
		\[ \tilde{\Pi} = \begin{pmatrix}
			i \Pi_1 & \frac{1}{2} H \\
			\frac{1}{2} H & i \Pi_3
		\end{pmatrix}, \]
		where $\Pi_1, \Pi_3 \in \R^{g \times g}$ are real matrices and $H$ is the topological matrix from Theorem~\ref{th:realhomology_discrete}.
		\item[(ii)] If $\tau$ reverses the bipartite coloring, then the real blocks satisfy the relation
		\[ \Pi^{B,B} + \Pi^{W,W} = H. \]
		Furthermore, the imaginary blocks satisfy $\Pi^{W,B} = \Pi^{B,W}$.
	\end{enumerate}
\end{corollary}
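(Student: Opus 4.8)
The plan is to derive this corollary directly, with no new integral computation, by feeding the orthodiagonality constraints of Lemma~\ref{lem:period_orthodiagonal} into the coloring-dependent block relations already established for $\tilde\Pi$ in Theorem~\ref{th:completeperiodmatrix}. The two ingredients I would invoke are: first, that in the orthodiagonal case the blocks $\Pi^{B,B}$ and $\Pi^{W,W}$ are real while $\Pi^{W,B}$ and $\Pi^{B,W}$ are purely imaginary; and second, the Type~1 and Type~2 identities of Theorem~\ref{th:completeperiodmatrix}. All the work lies in reconciling these two descriptions of the same blocks.

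For part (i) I would start from the Type~1 form $\tilde\Pi = \begin{psmallmatrix} i\Pi_1 & i\Pi_2 + \frac12 H \\ i\Pi_2^T + \frac12 H & i\Pi_3\end{psmallmatrix}$ furnished by Theorem~\ref{th:completeperiodmatrix}(i), so that $\Pi^{B,B} = i\Pi_2 + \frac12 H$ and $\Pi^{W,W} = i\Pi_2^T + \frac12 H$. Lemma~\ref{lem:period_orthodiagonal} forces these two blocks to be real, so their imaginary parts vanish, i.e.\ $\Pi_2 = 0$. This collapses both off-diagonal blocks of $\tilde\Pi$ (namely $\Pi^{B,B}$ and $\Pi^{W,W}$) to $\frac12 H$, while the diagonal blocks $\Pi^{B,W} = i\Pi_1$ and $\Pi^{W,B} = i\Pi_3$ are already purely imaginary and remain untouched, which is exactly the asserted matrix.

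For part (ii) I would substitute the reality and imaginarity statements into the Type~2 identities $\Pi^{B,B} + \overline{\Pi}^{W,W} = H$ and $\Pi^{W,B} + \overline{\Pi}^{B,W} = 0$ from Theorem~\ref{th:completeperiodmatrix}(ii). Since $\Pi^{W,W}$ is real, $\overline{\Pi}^{W,W} = \Pi^{W,W}$, so the first identity becomes $\Pi^{B,B} + \Pi^{W,W} = H$. Since $\Pi^{B,W}$ is purely imaginary, $\overline{\Pi}^{B,W} = -\Pi^{B,W}$, so the second identity reads $\Pi^{W,B} - \Pi^{B,W} = 0$, that is $\Pi^{W,B} = \Pi^{B,W}$.

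I do not expect a genuine obstacle here, since this is a corollary assembled from two results proved earlier. The only point requiring care is bookkeeping: one must match each named block $\Pi^{\bullet,\bullet}$ to its correct position in the $2g\times 2g$ matrix $\tilde\Pi$ (whose top-left entry is $\Pi^{B,W}$ rather than $\Pi^{B,B}$) and apply the conjugation rule $\overline{iM} = -iM$ for real $M$ consistently. Everything else is immediate from the cited statements.
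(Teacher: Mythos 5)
Your proposal is correct and is exactly the argument the paper intends: the corollary is stated as following by ``combining'' Lemma~\ref{lem:period_orthodiagonal} with Theorem~\ref{th:completeperiodmatrix}, which is precisely your substitution of the reality/imaginarity constraints into the Type~1 and Type~2 block identities (including the correct identification of the blocks within $\tilde{\Pi}$ and the vanishing of $\Pi_2$ in part (i)).
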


For discrete M-curves, where the topological matrix vanishes ($H=0$), this structure simplifies further, revealing a skew-symmetry in the second case.

\begin{corollary}\label{cor:matrix_real_structureM}
	Let $(\Sigma, \Lambda, \tau)$ be an orthodiagonal discrete M-curve (so $H=0$).
	\begin{enumerate}
		\item[(i)] If $\tau$ preserves the bipartite coloring, then $\Pi^{B,B} = 0 = \Pi^{W,W}$. Consequently, the complete discrete period matrix $\tilde{\Pi}$ is purely imaginary.
		\item[(ii)] If $\tau$ reverses the bipartite coloring, then
		\[ \Pi^{B,B} = -\Pi^{W,W} \quad \text{and} \quad \Pi^{W,B} = \Pi^{B,W}. \]
		Moreover, $\Pi^{B,B}$ is a real skew-symmetric matrix, while $\Pi^{W,B}$ and $\Pi^{B,W}$ are purely imaginary symmetric matrices.
	\end{enumerate}
\end{corollary}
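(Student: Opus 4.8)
The plan is to read off both statements as specializations of Corollary~\ref{cor:matrix_real_structure} to the M-curve case $H = 0$, supplemented in part (ii) by the symmetry of the complete period matrix. No new analytic input is required: everything follows by linear algebra from the already-established structural relations, the orthodiagonality hypothesis (Lemma~\ref{lem:period_orthodiagonal}), and the symmetry of $\tilde{\Pi}$ recorded in Proposition~\ref{prop:period_matrix}.

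For part (i) I would argue as follows. Corollary~\ref{cor:matrix_real_structure}(i) gives, in the color-preserving orthodiagonal case,
\[
\tilde{\Pi} = \begin{pmatrix} i\Pi_1 & \tfrac{1}{2}H \\ \tfrac{1}{2}H & i\Pi_3 \end{pmatrix}
\]
with $\Pi_1, \Pi_3$ real. Since an M-curve has $H = 0$, the two off-diagonal blocks $\Pi^{B,B}$ and $\Pi^{W,W}$, which equal $\tfrac{1}{2}H$, vanish, and the remaining diagonal blocks $i\Pi_1 = \Pi^{B,W}$ and $i\Pi_3 = \Pi^{W,B}$ are purely imaginary. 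Hence $\Pi^{B,B} = 0 = \Pi^{W,W}$ and $\tilde{\Pi}$ is purely imaginary, as claimed.

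For part (ii) the starting point is Corollary~\ref{cor:matrix_real_structure}(ii), which with $H = 0$ immediately yields $\Pi^{B,B} = -\Pi^{W,W}$ and $\Pi^{W,B} = \Pi^{B,W}$. It remains to identify the symmetry types. Orthodiagonality (Lemma~\ref{lem:period_orthodiagonal}) tells us that $\Pi^{B,B}, \Pi^{W,W}$ are real and that $\Pi^{W,B}, \Pi^{B,W}$ are purely imaginary. The extra ingredient is the symmetry of $\tilde{\Pi}$: writing out $\tilde{\Pi} = \tilde{\Pi}^T$ block by block, against the block layout $\tilde{\Pi} = \begin{psmallmatrix} \Pi^{B,W} & \Pi^{B,B} \\ \Pi^{W,W} & \Pi^{W,B} \end{psmallmatrix}$, shows that the diagonal blocks $\Pi^{B,W}, \Pi^{W,B}$ are each symmetric and that the off-diagonal blocks satisfy $\Pi^{B,B} = (\Pi^{W,W})^T$. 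Combining this with $\Pi^{W,W} = -\Pi^{B,B}$ produces $\Pi^{B,B} = (\Pi^{W,W})^T = -(\Pi^{B,B})^T$, so $\Pi^{B,B}$ is skew-symmetric; being also real, $\Pi^{B,B}$ is real skew-symmetric. Finally $\Pi^{W,B} = \Pi^{B,W}$ are purely imaginary and, by the block symmetry just noted, symmetric.

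The computations are entirely routine; the only genuine conceptual point is that the skew-symmetry of $\Pi^{B,B}$ is invisible from Corollary~\ref{cor:matrix_real_structure} alone and emerges only after invoking the global symmetry of $\tilde{\Pi}$ together with the sign relation $\Pi^{B,B} = -\Pi^{W,W}$. I therefore expect the main (minor) obstacle to be bookkeeping the block-transpose relation $\Pi^{B,B} = (\Pi^{W,W})^T$ correctly against the nonstandard block ordering in the definition of $\tilde{\Pi}$, where the \emph{off}-diagonal blocks carry the same-color periods, so that the two facts combine with the right sign to give skew-symmetry rather than symmetry.
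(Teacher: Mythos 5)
Your proposal is correct and follows essentially the same route as the paper: part (i) is read off from Corollary~\ref{cor:matrix_real_structure}(i) with $H=0$, and part (ii) combines Corollary~\ref{cor:matrix_real_structure}(ii) with the block symmetry of $\tilde{\Pi}$ from Proposition~\ref{prop:period_matrix} to deduce $\Pi^{B,B} = (\Pi^{W,W})^T = -(\Pi^{B,B})^T$. Your explicit appeal to Lemma~\ref{lem:period_orthodiagonal} for the reality of $\Pi^{B,B}$ and the pure imaginarity of $\Pi^{W,B}, \Pi^{B,W}$ only makes visible a step the paper leaves implicit.
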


\begin{proof}
	(i) Follows directly from Corollary~\ref{cor:matrix_real_structure}(i) with $H=0$.
	
	(ii) Corollary~\ref{cor:matrix_real_structure}(ii) with $H=0$ implies $\Pi^{B,B} = -\Pi^{W,W}$ and $\Pi^{W,B} = \Pi^{B,W}$.
	Since $\tilde{\Pi}$ is symmetric (Proposition~\ref{prop:period_matrix}), its diagonal blocks $\Pi^{B,W}$ and $\Pi^{W,B}$ are symmetric. Its off-diagonal blocks satisfy $\Pi^{B,B} = (\Pi^{W,W})^T$. Substituting the relation from above yields $\Pi^{B,B} = (-\Pi^{B,B})^T$, proving skew-symmetry.
\end{proof}

\subsection{Construction of Examples}\label{sec:examples}

The linear nature of the discrete theory facilitates the implementation of algorithms to compute the (complete) discrete period matrix of a given discrete Riemann surface. Such computations were recently performed by \c{C}elik, Fairchild, and Mandelshtam in \cite{CFM23}. Provided the number of faces is not excessively large, the resulting system of linear equations can be solved efficiently by computer algebra systems.

We utilized Mathematica to perform a series of experiments, all of which corroborated our theoretical results in Section~\ref{sec:discrete_homology}. In particular, we experimentally confirmed that the analogue of Corollary~\ref{cor:matrix_real_structureM}~(i) does not hold for involutions that swap the bipartite coloring: There exist discrete M-curves with a color-reversing $\tau$ (Type 2) such that the complete discrete period matrix $\tilde{\Pi}$ is \emph{not} purely imaginary. However, the discrete period matrix $\Pi$ behaves as its continuous counterpart according to Corollary~\ref{cor:periodmatrix}.
We omit the detailed experimental data, which can be found in \cite{D23}, as it strictly aligns with the theoretical discussion above.

Instead, we present a constructive machinery to generate discrete real Riemann surfaces of any topological type (dividing or non-dividing) and with any number of real ovals allowed by Theorem~\ref{th:number_ovals_discrete}. On these surfaces, the computational methods of \cite{CFM23} can be applied directly. Convergence results in \cite{G23} establish convergence for finer discretizations of a fixed polyhedral surface. In light of this, we expect the resulting discrete period matrices to approximate their continuous counterparts.

Since the structure of the complete discrete period matrix in Theorem~\ref{th:completeperiodmatrix} exhibits a closer alignment with the continuous theory in the case of color-preserving involutions, we restrict our explicit constructions to this class (Type 1) in the following.

\subsubsection*{Delaunay-Voronoi quadrangulations}

Our method relies on constructing quadrangulated surfaces from specific triangulated surfaces, a technique studied by Bobenko and Skopenkov \cite{BoSk12}. Let $\Gamma$ be a triangulation of a Riemann surface $\Sigma$, realized as a polyhedral surface $S$ in Euclidean space.
If the intrinsic circumcircles of the triangles on $S$ contain no vertices other than the vertices of the respective triangle, the triangulation is called \textit{Delaunay}. In this case, the dual vertices $V(\Gamma^*)$ can be placed at the circumcenters of the triangles (which may lie outside the triangles themselves). Connecting the vertices of each triangle with the corresponding dual vertex yields a bipartite quad-graph $\Lambda$, known as the \textit{Delaunay-Voronoi quadrangulation} \cite{BoSk12}.
The discrete complex structure is inherited from the Euclidean geometry of $S$. Since the circumcenters are the intersection points of perpendicular bisectors, the resulting discrete Riemann surface is orthodiagonal ($\rho \in \mathds{R}^+$). Thus, the specialized results for the period matrix (Lemma~\ref{lem:period_orthodiagonal} and Corollary~\ref{cor:matrix_real_structure}) apply.

If $\Gamma$ is not Delaunay, we may simply place the dual vertices at arbitrary points in the interior of the triangles (e.g., barycenters). This construction still yields a bipartite quad-graph $\Lambda$ and a discrete Riemann surface $(\Sigma, \Lambda)$ induced by the Euclidean metric, though it will generally not be orthodiagonal.

We now describe constructions for the different topological types of discrete real Riemann surfaces.

\subsubsection*{Dividing discrete real Riemann surfaces}

Let $k$ and $g$ satisfy $k \equiv g+1 \pmod 2$, as required for the dividing case (Theorem~\ref{th:number_ovals_discrete}).
We start with a triangulated polyhedral surface $T$ in $\mathds{R}^3$ of genus $g' \coloneqq \frac{g+1-k}{2}$, which has $k$ boundary components lying on a common plane $\varepsilon$. We assume $T$ does not intersect $\varepsilon$ elsewhere. Reflecting $T$ across this plane yields a closed symmetric surface $S$ of genus $2g'+k-1 = g$ (as in Figure~\ref{fig:dividing6}, where $g'=2$ and $k=3$).

According to \cite{BoSp07}, the induced triangulation on $S$ is Delaunay if and only if the sum of angles opposite to any interior edge is at most $\pi$. Thus, if the initial triangulation $T$ satisfies this condition for interior edges, and if the angles opposite to boundary edges are non-obtuse, the resulting global triangulation is Delaunay.

We construct $\Lambda$ by placing dual vertices and reflecting them across $\varepsilon$. If the triangulation is Delaunay, we choose the dual vertices to be the circumcenters; in this case, the resulting discrete Riemann surface $(S, \Lambda)$ is orthodiagonal. If the triangulation is not Delaunay, we choose symmetric interior points, yielding a general discrete Riemann surface.
Since Euclidean reflections map circumcenters to circumcenters, the reflection across $\varepsilon$ induces a color-preserving involution $\tau$ on $\Lambda$. Geometric considerations show $\tau(\rho_Q) = \overline{\rho_Q}$, ensuring $\tau$ is discrete antiholomorphic. The fixed point set consists exactly of the $k$ boundary curves of $T$, making $(S, \Lambda, \tau)$ a dividing surface with $k$ ovals.

\subsubsection*{Non-dividing discrete real Riemann surfaces ($k > 0$)}

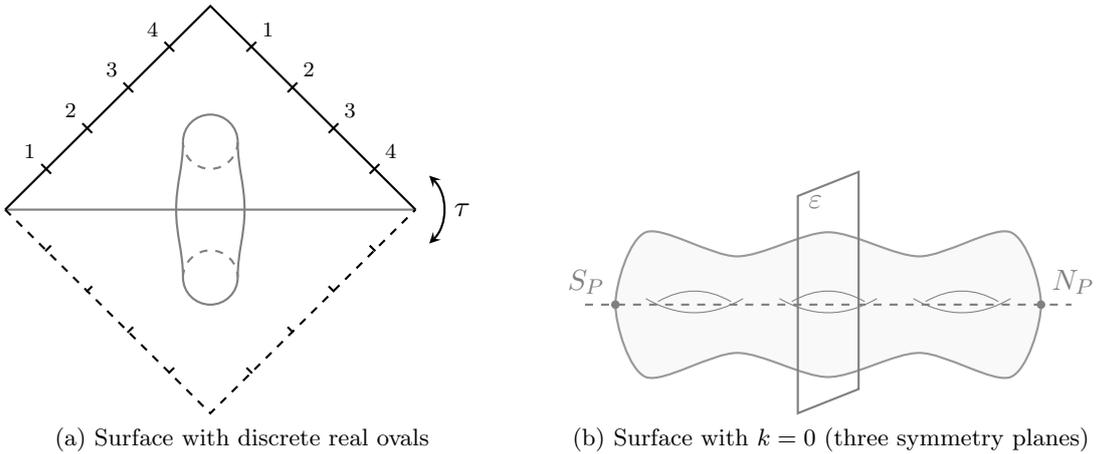
\begin{figure}[htbp]
	\centering
	\subfloat[Surface with discrete real ovals]{
		\begin{tikzpicture}[scale=0.9, >=Latex, thick]
			\coordinate (L) at (-3, 0);
			\coordinate (R) at (3, 0);
			\coordinate (T) at (0, 3);
			\coordinate (B) at (0, -3);
			
			\draw[gray] (L) -- (R);
			
			\draw[gray, thick] (-0.4, 1) to[out=-90, in=90] (-0.5, 0) to[out=-90, in=90] (-0.4, -1);
			\draw[gray, thick] (0.4, 1) to[out=-90, in=90] (0.5, 0) to[out=-90, in=90] (0.4, -1);
			
			\draw[gray, thick, fill=white] (-0.4, 1) arc (180:0:0.4);
			\draw[gray, thick, dashed] (0.4, 1) arc (0:-180:0.4);   
			
			\draw[gray, thick, fill=white] (0.4, -1) arc (0:-180:0.4); 
			\draw[gray, thick, dashed] (-0.4, -1) arc (180:0:0.4);  
			
			\draw (L) -- (T) -- (R);
			
			\foreach \p/\l in {0.2/1, 0.4/2, 0.6/3, 0.8/4} {

	\draw ($(L)!\p!(T)!0.1cm!90:(T)$) -- ($(L)!\p!(T)!-0.1cm!90:(T)$);
	\node[above left, font=\scriptsize] at ($(L)!\p!(T)$) {\l};
	\draw ($(T)!\p!(R)!0.1cm!90:(R)$) -- ($(T)!\p!(R)!-0.1cm!90:(R)$);
	\node[above right, font=\scriptsize] at ($(T)!\p!(R)$) {\l};
}
			
			\draw[dashed] (L) -- (B) -- (R);
			\foreach \p in {0.2, 0.4, 0.6, 0.8} {
				\draw[dashed] ($(L)!\p!(B)!0.1cm!90:(B)$) -- ($(L)!\p!(B)!-0.1cm!90:(B)$);
				\draw[dashed] ($(B)!\p!(R)!0.1cm!90:(R)$) -- ($(B)!\p!(R)!-0.1cm!90:(R)$);
			}
			
			\draw[<->, thick, >=stealth, bend left=45] (3.2, 0.5) to node[right] {$\tau$} (3.2, -0.5);
			
		\end{tikzpicture}
	}
	\qquad
	\subfloat[Surface with $k=0$ (three symmetry planes)]{
		\begin{tikzpicture}[scale=0.8, >=Latex, thick]
			\def\len{3.5} 
			\def\h{1.2}   
			
			\draw[gray!80, fill=gray!5] plot [smooth cycle] coordinates 
			{(-\len,0) (-\len+0.5, \h) (-1.5, 0.8) (0, \h) (1.5, 0.8) (\len-0.5, \h) 
				(\len,0) (\len-0.5, -\h) (1.5, -0.8) (0, -\h) (-1.5, -0.8) (-\len+0.5, -\h)};
			
			\foreach \x in {-2.2, 0, 2.2} {
				\begin{scope}[shift={(\x,0)}]
					\draw[thin, gray] (-0.8,0.1) to[out=-30, in=210] (0.8,0.1);
					\draw[thin, gray] (-0.6,0.05) to[out=30, in=150] (0.6,0.05);
				\end{scope}
			}
			
				\draw[gray, thick] (-0.5, -1.8) -- (-0.5, 1.8) node[right, yshift=-2pt] {$\varepsilon$} -- (0.5, 2.2) -- (0.5, -1.4) -- cycle;
			
			\draw[gray, thick, dashed] (-\len-0.5, 0) -- (\len+0.5, 0);
			
			% --- Pole ---
			\fill[gray] (-\len, 0) circle (2pt) node[above left] {$S_P$};
			\fill[gray] (\len, 0) circle (2pt) node[above right] {$N_P$};
			
		\end{tikzpicture}
	}
	\caption{Construction of non-dividing discrete real Riemann surfaces.}
	\label{fig:non-dividing_0}
\end{figure}

We construct a genus $g$ surface starting from a planar polygon. Let $g' \coloneq g - k + 1$. Consider a regular $4g'$-gon in the plane. We triangulate a fundamental domain corresponding to half of this polygon (cut by a main diagonal). For $g'=1$, this is an isosceles right triangle (half of a square).
We assume the boundary edges on one half of the perimeter match the triangulation on the opposite half to allow for identification. Reflecting the triangulation across the diagonal yields the full $4g'$-gon. Identifying opposite edges produces a flat polyhedral surface $S'$ of genus $g'$.

To introduce real ovals, we embed this flat surface into $\mathds{R}^3$ and position it symmetrically with respect to a plane $\varepsilon$ passing through the reflection diagonal. We then excise $k-1$ pairs of disjoint triangles symmetric with respect to $\varepsilon$ and replace them with triangulated handles connecting the holes to the plane $\varepsilon$. This operation adds $k-1$ to the genus, resulting in a total genus $g = g' + k - 1$.

We ensure that the Delaunay condition is met during the initial triangulation and the attachment of handles. If this condition is satisfied, we place the dual vertices at the circumcenters and reflect them accordingly. This construction yields an orthodiagonal discrete real Riemann surface. If the Delaunay condition is not met, generic dual vertices are used, resulting in a non-orthodiagonal surface.
The reflection across $\varepsilon$ induces the involution $\tau$. The fixed point set consists of the diagonal of the fundamental polygon and the $k-1$ intersection curves of the handles with $\varepsilon$, yielding exactly $k$ ovals. Since the fundamental polygon edges are identified in an orientation-reversing manner relative to the reflection, the surface is non-dividing.

\subsubsection*{Non-dividing discrete real Riemann surfaces without ovals ($k=0$)}

For the case $k=0$, we mimic the antipodal map $x \mapsto -x$. We consider a genus $g$ surface in $\mathds{R}^3$ that is \textit{highly symmetric}, meaning it is invariant under reflections across three orthogonal coordinate planes. For $g=0$ (the sphere), the composition of these three reflections is the antipodal map, which has no fixed points.

Figure~\ref{fig:non-dividing_0}b depicts such a symmetric surface of genus $g$. For visual clarity, only one symmetry plane $\varepsilon$ (the equatorial plane) and the rotation axis passing through the north and south poles ($N_P, S_P$) are shown. The involution $\tau$ can be realized as the reflection across $\varepsilon$ followed by a $180^\circ$ rotation around this axis. This operation is equivalent to the composition of reflections across the other two coordinate planes. Since the rotation has no fixed points in the equatorial plane, and the reflection fixes only the equatorial plane, the composition has no fixed points on the surface.

For the discrete construction, we generate a symmetric polyhedral surface by starting with a triangulation $T$ of a fundamental domain in the first octant (one eighth of the space), such that the boundary lies entirely on the three coordinate planes. Reflecting $T$ successively across the $xy$-, $yz$-, and $zx$-planes generates a closed surface $S$ of genus $g$.

Provided the initial triangulation $T$ satisfies the angle conditions (non-obtuse angles opposite to boundary edges), the resulting global triangulation is Delaunay. In this case, placing dual vertices at the circumcenters produces an orthodiagonal discrete Riemann surface.
The involution $\tau$ is defined as the composition of the three reflections (central symmetry). As in the continuous case, this $\tau$ is discrete antiholomorphic (color-preserving) but has no fixed points, as the origin is not part of the surface. Thus, we obtain a non-dividing discrete real Riemann surface with $k=0$.

%%%%%%%%%%%%%%%%%%

\appendix
\section{Symmetric matrices over $\mathds{Z}_2$}\label{sec:bilinearforms}

Throughout this section, we consider matrices over the field $\mathds{Z}_2$.
Our aim is to give an elementary proof of the known fact that symmetric matrices over $\mathds{Z}_2$ are uniquely characterized by their rank and the presence of non-zero diagonal elements (denoted by $\mathrm{diag}$) up to congruence transformations.
Any symmetric matrix defines a symmetric bilinear form; we follow the notations and ideas given in \cite{Alb38}, adapted to our specific setting.

Let $A, B$ be two $g \times g$-matrices over $\mathds{Z}_2$.
We say that $A$ and $B$ are \emph{congruent} if there exists an invertible matrix $P$ such that
\[
A = P B P^{T}.
\]
Clearly, congruence preserves symmetry.
To establish the normal forms, we utilize elementary row and column operations: replacing the $i$-th row by the sum of itself and a linear combination of other rows must be accompanied by the corresponding operation on the columns to maintain symmetry (this corresponds to $P$ performing the row operations). This process is the \emph{symmetric Gauss algorithm}.

We call a submatrix $G$ of $A$ a \emph{principal submatrix} if the main diagonal of $G$ is a subset of the main diagonal of $A$.

\begin{lemma}\label{lem:congruence}
	Every non-zero symmetric matrix $A$ is congruent to a matrix that has a non-zero principal $1\times1$- or $2\times2$-submatrix.
\end{lemma}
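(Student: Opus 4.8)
The plan is to argue by a direct case distinction on the diagonal of $A$, with the congruence taken essentially trivially. Over $\mathds{Z}_2$ a nonzero principal $1\times 1$ submatrix is nothing but a diagonal entry equal to $1$, so the first thing I would check is whether $A$ has any nonzero diagonal entry. If it does, say $A_{ii}=1$, then the $1\times 1$ principal submatrix indexed by $\{i\}$ is already nonzero and the claim holds with $P = I$.

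If instead every diagonal entry vanishes, then since $A \neq 0$ there must be a nonzero off-diagonal entry $A_{ij} = 1$ with $i \neq j$. By symmetry $A_{ji} = A_{ij} = 1$, so the principal $2\times 2$ submatrix indexed by $\{i,j\}$ is
\[
\begin{pmatrix} A_{ii} & A_{ij} \\ A_{ji} & A_{jj} \end{pmatrix} = \begin{pmatrix} 0 & 1 \\ 1 & 0 \end{pmatrix},
\]
which is nonzero. Hence in either case $A$ itself already contains a nonzero principal submatrix of the required size, and the asserted congruence holds with $P = I$. If one prefers the block to occupy the leading position --- convenient for the induction driving the normal-form classification --- I would move it to the top-left corner by a permutation, which is a congruence since permutation matrices satisfy $P^{-1} = P^T$.

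I do not expect a serious obstacle: the lemma is deliberately elementary and serves only to initialize the symmetric Gauss algorithm. The single point that will deserve care is the vanishing-diagonal case, where one is genuinely forced into the $2\times 2$ situation; over $\mathds{Z}_2$ the only nonzero symmetric $2\times 2$ block with zero diagonal is the hyperbolic block $\begin{psmallmatrix} 0 & 1 \\ 1 & 0 \end{psmallmatrix}$, which is moreover invertible. Although the statement only asks for nonvanishing, this extra invertibility --- shared by the $1\times 1$ block $(1)$ arising in the other case --- is precisely what will let the block be split off as an orthogonal summand in the subsequent step of the argument.
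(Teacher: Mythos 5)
Your proof is correct, but it takes a genuinely shorter route than the paper's in the essential (zero-diagonal) case, and the difference is worth spelling out. The paper's proof implicitly treats a ``principal $2\times2$-submatrix'' as one sitting at \emph{consecutive} indices $\{i,i+1\}$: that is why, when the nonzero entry $a_{ij}$ has $j>i+1$, it performs the symmetric Gauss step of adding row (and column) $i$ to row (and column) $j-1$, which creates a nonzero entry at the adjacent pair $(j-1,j)$. You instead take the paper's written definition of principal submatrix at face value --- same row and column index set, not necessarily consecutive --- so the submatrix indexed by $\{i,j\}$ is already the hyperbolic block $\begin{psmallmatrix} 0 & 1 \\ 1 & 0 \end{psmallmatrix}$, and no congruence beyond $P=I$ is needed. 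Under the stricter consecutive-indices reading your argument still closes, because your final remark supplies the permutation congruence $A \mapsto PAP^T$ moving $\{i,j\}$ to $\{1,2\}$, where the block becomes a genuine leading $2\times2$ principal block. Both routes serve the downstream purpose equally well: the proof of Lemma~\ref{lem:invertible_sub} begins by permuting the invertible principal submatrix to the upper-left corner anyway, so the adjacency that the paper works to achieve is never actually exploited. What the paper's version buys is an illustration of the symmetric row/column operations that drive the whole normal-form algorithm; what yours buys is economy, plus the explicit observation that the $1\times1$ block $(1)$ and the hyperbolic $2\times2$ block are invertible, which is precisely the property Proposition~\ref{prop:full_principal} needs in the next step.
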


\begin{proof}
	If there exists an index $i$ such that $a_{ii} \neq 0$, then $A$ already has a non-zero principal $1\times1$-submatrix.
	
	Assume now that $a_{ii} = 0$ for all $i$. Since $A \neq 0$, there exist indices $i < j$ such that $a_{ij} = a_{ji} \neq 0$.
	If $j = i + 1$, the principal submatrix defined by rows and columns $i, i+1$ is $\begin{pmatrix} 0 & 1 \\ 1 & 0 \end{pmatrix}$, which is non-zero.
	
	If $j > i + 1$, we apply the congruence transformation that adds the $i$-th row to the $(j-1)$-th row (and simultaneously the $i$-th column to the $(j-1)$-th column).
	The new entry at position $(j-1, j)$ becomes
	\[
	a'_{j-1, j} = a_{j-1, j} + a_{i, j} = 0 + 1 = 1.
	\]
	Since the new matrix has a non-zero entry at $(j-1, j)$, it possesses a non-zero principal $2\times2$-submatrix at indices $j-1, j$.
\end{proof}

\begin{lemma}\label{lem:invertible_sub}
	Let $G$ be an invertible principal submatrix of a symmetric matrix $A$.
	Then $A$ is congruent to a block diagonal matrix
	\[
	\begin{pmatrix}
		G & 0 \\[0.2em]
		0 & H
	\end{pmatrix}.
	\]
\end{lemma}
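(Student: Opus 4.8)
The plan is to carry out a single symmetric Schur-complement elimination, using only the invertibility of $G$. First I would bring $G$ into the top-left corner by a permutation congruence: since $G$ is a \emph{principal} submatrix, it is indexed by some subset $S \subseteq \{1,\dots,g\}$ applied simultaneously to rows and columns, and conjugating $A$ by the permutation matrix that carries $S$ to $\{1,\dots,r\}$ (with $r$ the size of $G$) is a congruence $P A P^T$ that preserves symmetry and maps principal submatrices to principal submatrices. After this reordering I may assume
\[
A = \begin{pmatrix} G & B \\ B^T & C \end{pmatrix},
\]
with $G$ an $r \times r$ invertible block, $B$ of size $r \times (g-r)$, and $C$ symmetric of size $(g-r)\times(g-r)$. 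Note that $G$ inherits symmetry from $A$, so $G^{-1}$ is symmetric as well.

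Next I would eliminate the block $B$ by a lower-unitriangular congruence. Set
\[
P = \begin{pmatrix} I_r & 0 \\ B^T G^{-1} & I_{g-r} \end{pmatrix},
\]
which is invertible, and write $X = B^T G^{-1}$. Computing $P A P^T$ over $\mathds{Z}_2$, the top-right block equals $G X^T + B$; since $X^T = G^{-1} B$ (using $G^T = G$), this is $B + B = 0$. The bottom-left block is $X G + B^T = B^T + B^T = 0$, confirming symmetry, and the bottom-right block is the Schur complement $H = C + B^T G^{-1} B$. Hence $A$ is congruent to $\begin{pmatrix} G & 0 \\ 0 & H \end{pmatrix}$, as claimed.

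Finally I would check that we remain inside the class of symmetric matrices, i.e.\ that this is a legitimate step of the symmetric Gauss algorithm: $C$ is symmetric and $G^{-1}$ is symmetric, so $B^T G^{-1} B$ is symmetric, and therefore $H$ is symmetric.

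I do not expect a serious obstacle here; the argument is the standard block-diagonalization by a Schur complement. The only points that require a word of care are (a) justifying that a principal submatrix can be moved to the top-left block by a \emph{single} permutation \emph{congruence} (so that symmetry and the principal-submatrix property are preserved), and (b) verifying that the elimination is genuinely of the form $P(\cdot)P^T$ rather than a one-sided row reduction — both are settled by exhibiting the explicit $P$ above. Working over $\mathds{Z}_2$ only simplifies the bookkeeping, since $-1 = 1$ makes all signs in the Schur complement disappear.
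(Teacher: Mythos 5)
Your proof is correct and follows essentially the same route as the paper: the paper's proof also permutes $G$ to the upper-left corner and then uses it as a pivot block to clear the off-diagonal blocks by simultaneous row and column operations, which is exactly your congruence by $P = \begin{pmatrix} I_r & 0 \\ B^T G^{-1} & I_{g-r} \end{pmatrix}$ producing the Schur complement $H = C + B^T G^{-1} B$. Your version merely makes explicit the matrix $P$ and the block computation that the paper leaves implicit.
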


\begin{proof}
	By applying a permutation, we can move the submatrix $G$ to the upper-left corner.
	Since $G$ is invertible, we can use it as a pivot block: adding suitable multiples of the rows of $G$ to the rows below (and symmetrically for columns) eliminates all entries in the lower-left and upper-right blocks, leaving a matrix $H$ in the lower-right.
\end{proof}

\begin{proposition}\label{prop:full_principal}
	Every symmetric matrix is congruent to a block diagonal matrix of the form
	\[
	\begin{pmatrix}
		I_k & 0 & 0 \\[0.2em]
		0 & G_m &0\\[0.2em]
		0 & 0 & 0
	\end{pmatrix},
	\]
	where $I_k$ is the $k \times k$ identity matrix, and $G_m$ consists of $m$ diagonal blocks of the form $\begin{pmatrix} 0 & 1 \\ 1 & 0 \end{pmatrix}$.
\end{proposition}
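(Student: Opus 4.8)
The plan is to argue by induction on the size $g$ of the matrix, peeling off one invertible principal block at a time by means of the two preceding lemmas, and then reorganizing the resulting blocks via a permutation congruence. For the base case $g \leq 1$, a symmetric matrix over $\mathds{Z}_2$ is either $(0)$ or $(1)$, both of which already have the asserted shape (take $m=0$ and $k=0$ or $k=1$). So the content lies entirely in the inductive step and the final bookkeeping.

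For the inductive step, let $A$ be a symmetric $g \times g$ matrix. If $A = 0$ we are finished. Otherwise, Lemma~\ref{lem:congruence} produces a congruent matrix possessing a non-zero principal $1\times 1$ or $2\times 2$ submatrix $G$. Over $\mathds{Z}_2$ a non-zero $1\times1$ block is necessarily $(1)$, and—since the $2\times 2$ case in Lemma~\ref{lem:congruence} only arises when every diagonal entry vanishes—the corresponding $2\times 2$ block is exactly $\begin{psmallmatrix} 0 & 1 \\ 1 & 0 \end{psmallmatrix}$. In either case $G$ is invertible (the determinant of $\begin{psmallmatrix} 0 & 1 \\ 1 & 0 \end{psmallmatrix}$ equals $1$ in $\mathds{Z}_2$), so Lemma~\ref{lem:invertible_sub} lets us write $A$ as congruent to $\begin{psmallmatrix} G & 0 \\ 0 & H \end{psmallmatrix}$ with $H$ symmetric of strictly smaller size. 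Applying the induction hypothesis to $H$ yields its normal form, and therefore $A$ is congruent to a block diagonal matrix each of whose blocks is either $(1)$, a copy of $\begin{psmallmatrix} 0 & 1 \\ 1 & 0 \end{psmallmatrix}$, or a $1\times 1$ zero block.

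It then remains to sort these blocks. Since permutation matrices $P$ satisfy $P^{-1} = P^{T}$, simultaneously permuting rows and columns is a congruence transformation; applying such a permutation, I would gather all the $(1)$ blocks into a single identity block $I_k$, all the off-diagonal blocks $\begin{psmallmatrix} 0 & 1 \\ 1 & 0 \end{psmallmatrix}$ into $G_m$, and all the remaining zero entries into a trailing zero block, which is precisely the target form. The one point requiring attention is the identification of the invertible block supplied by Lemma~\ref{lem:congruence}: one must observe that in the $2\times 2$ case it is the alternating block $\begin{psmallmatrix} 0 & 1 \\ 1 & 0 \end{psmallmatrix}$ rather than an arbitrary non-zero symmetric $2\times 2$ matrix, since this is what both guarantees invertibility and matches the block appearing in $G_m$. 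I expect no serious obstacle here; in particular, uniqueness of the pair $(k,m)$ is \emph{not} asserted in this proposition, so there is no need to rule out recombinations of blocks—that refinement is deferred to the characterization by $\mathrm{rank}$ and $\mathrm{diag}$.
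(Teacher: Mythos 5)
Your proof is correct and follows essentially the same route as the paper: induction using Lemma~\ref{lem:congruence} to locate an invertible principal block (necessarily $(1)$ or $\begin{psmallmatrix} 0 & 1 \\ 1 & 0 \end{psmallmatrix}$) and Lemma~\ref{lem:invertible_sub} to split it off. Your explicit permutation-congruence step to sort the blocks at the end is a minor refinement the paper leaves implicit, but it is the same argument.
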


\begin{proof}
	We proceed by induction on the size of the matrix.
	If $\mathrm{diag}(A) \neq 0$, there is a non-zero diagonal entry (a $1 \times 1$ principal submatrix $G=(1)$). Since $(1)$ is invertible, Lemma~\ref{lem:invertible_sub} allows us to split it off and recurse on the remainder. This produces identity blocks.
	
	If $\mathrm{diag}(A) = 0$ but $A \neq 0$, Lemma~\ref{lem:congruence} ensures the existence of a principal $2 \times 2$ submatrix $G = \begin{pmatrix} 0 & 1 \\ 1 & 0 \end{pmatrix}$. Since $\det(G) = 1 \neq 0$, it is invertible. Again, Lemma~\ref{lem:invertible_sub} allows us to split $G$ off and recurse. This produces blocks of type $\begin{pmatrix} 0 & 1 \\ 1 & 0 \end{pmatrix}$.
	
	Iterating this process until the remainder is zero yields the stated block form.
\end{proof}

\begin{corollary}\label{cor:characterization}
	Every symmetric matrix over $\mathds{Z}_2$ is uniquely characterized by its rank and its diagonal (whether it vanishes or not) up to congruence.
\end{corollary}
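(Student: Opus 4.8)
The plan is to read off the complete set of congruence invariants from the block normal form of Proposition~\ref{prop:full_principal}. Write any symmetric matrix $A$ over $\Z_2$ in the normal form $I_k \oplus G_m \oplus 0$, where $G_m$ denotes the direct sum of $m$ copies of the hyperbolic block $\left(\begin{smallmatrix} 0 & 1 \\ 1 & 0 \end{smallmatrix}\right)$. The two parameters $(k,m)$ are tied directly to the quantities in the statement: the rank of the normal form is $k+2m$, and its diagonal vanishes precisely when $k=0$ (the identity blocks are the only source of nonzero diagonal entries).

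The first step is to confirm that both quantities are genuine congruence invariants, so that the normal-form data is well defined. Rank is invariant because congruence is implemented by an invertible matrix. For $\mathrm{diag}$, the key observation over $\Z_2$ is that $x^{T} A x = \sum_i a_{ii} x_i$ (using $x_i^2 = x_i$ and $2a_{ij}=0$); this is the linear functional determined by the diagonal, and it vanishes identically exactly when $\mathrm{diag}(A)=0$, i.e. when the associated bilinear form is alternating. Since $x^{T}(PBP^{T})x = (P^{T}x)^{T} B\,(P^{T}x)$, the alternating property—and hence the vanishing of $\mathrm{diag}$—is preserved under congruence. This already shows that matrices with different values of $(\mathrm{rank},\mathrm{diag})$ cannot be congruent.

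The key step, and the only non-routine one, is the congruence
\[
(1) \oplus \begin{pmatrix} 0 & 1 \\ 1 & 0 \end{pmatrix} \;\cong\; I_3,
\]
which I would prove by exhibiting an explicit invertible $P$ over $\Z_2$: taking the rows of $P$ to be $(1,1,1),(1,1,0),(1,0,1)$ gives $P\,I_3\,P^{T}$ equal to the left-hand side, and these three rows span $\Z_2^3$, so $P$ is invertible. This identity allows one to trade a single hyperbolic block for two identity blocks whenever at least one identity block is already present, i.e. $I_k \oplus G_m \cong I_{k+2} \oplus G_{m-1}$ for $k \ge 1$.

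The final step is the reduction to canonical forms. If $\mathrm{diag}(A)=1$, then $k \ge 1$, and iterating the trade absorbs every hyperbolic block, yielding $A \cong I_r \oplus 0$ with $r = \mathrm{rank}(A)$. If $\mathrm{diag}(A)=0$, then $k=0$, so $A \cong G_{r/2}\oplus 0$ (which forces $r$ to be even). In either case the canonical form depends only on $(\mathrm{rank},\mathrm{diag})$; hence any two symmetric matrices sharing these invariants are congruent to a common canonical form, and therefore to each other. Together with the invariance established above, this yields the asserted unique characterization. I expect the main obstacle to be the verification of the displayed congruence, since without it one would wrongly conclude that the pair $(k,m)$ itself is a complete invariant rather than collapsing to $(\mathrm{rank},\mathrm{diag})$.
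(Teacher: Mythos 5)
Your proposal is correct and follows essentially the same route as the paper: the normal form of Proposition~\ref{prop:full_principal}, the key congruence $(1) \oplus \begin{psmallmatrix} 0 & 1 \\ 1 & 0 \end{psmallmatrix} \cong I_3$ (your explicit $P$ with rows $(1,1,1),(1,1,0),(1,0,1)$ does satisfy $PP^{T} = (1)\oplus\begin{psmallmatrix} 0 & 1 \\ 1 & 0 \end{psmallmatrix}$ over $\Z_2$ and is invertible), and the same case split on $\mathrm{diag}(A)$. Your extra step verifying that $\mathrm{diag}$ is itself a congruence invariant---via $x^{T}Ax=\sum_i a_{ii}x_i$ and the invariance of the alternating property under $A\mapsto PAP^{T}$---is left implicit in the paper, so this is a small completion rather than a deviation.
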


\begin{proof}
	Let $A$ be a symmetric matrix.
	\begin{enumerate}
		\item If $\mathrm{diag}(A) = 0$, then in the normal form of Proposition~\ref{prop:full_principal}, no identity blocks $I_k$ can appear (as they would introduce non-zero diagonal elements). Thus, $A$ is congruent to a direct sum of $m$ blocks of $\begin{pmatrix} 0 & 1 \\ 1 & 0 \end{pmatrix}$. The rank is $2m$. Since the rank is invariant under congruence, $A$ is uniquely determined by $\mathrm{rank}(A)$. (Note that for $\mathrm{diag}(A)=0$, the rank must be even).
		
		\item If $\mathrm{diag}(A) \neq 0$, the normal form contains at least one identity block ($k \ge 1$). We observe that the direct sum of an identity block and a symplectic block is congruent to three identity blocks:
		\[
		(1) \oplus \begin{pmatrix} 0 & 1 \\ 1 & 0 \end{pmatrix}
		=
		\begin{pmatrix}
			1 & 0 & 0 \\
			0 & 0 & 1 \\
			0 & 1 & 0
		\end{pmatrix}
		\cong
		\begin{pmatrix}
			1 & 0 & 0 \\
			0 & 1 & 0 \\
			0 & 0 & 1
		\end{pmatrix} = I_3.
		\]
		Explicitly, the transformation is given by $B = P A P^T$ with
		\[
		P = \begin{pmatrix}
			1 & 1 & 0 \\
			1 & 0 & 1 \\
			1 & 1 & 1
		\end{pmatrix}, \quad
		\begin{pmatrix}
			1 & 1 & 0 \\
			1 & 0 & 1 \\
			1 & 1 & 1
		\end{pmatrix}
		\begin{pmatrix}
			1 & 0 & 0 \\
			0 & 0 & 1 \\
			0 & 1 & 0
		\end{pmatrix}
		\begin{pmatrix}
			1 & 1 & 1 \\
			1 & 0 & 1 \\
			0 & 1 & 1
		\end{pmatrix}
		=
		\begin{pmatrix}
			1 & 0 & 0 \\
			0 & 1 & 0 \\
			0 & 0 & 1
		\end{pmatrix}.
		\]
		Using this relation recursively, we can convert all blocks of type $\begin{pmatrix} 0 & 1 \\ 1 & 0 \end{pmatrix}$ into identity blocks $I_2$, provided there is at least one $I_1$ to start with.
		Thus, if $\mathrm{diag}(A) \neq 0$, $A$ is congruent to $I_r \oplus 0$, where $r = \mathrm{rank}(A)$.
	\end{enumerate}
	Consequently, the rank and $\mathrm{diag}(A)$ completely determine the congruence class.
\end{proof}

\bibliographystyle{plain}
\begin{small}
	\bibliography{Literature}	
\end{small}
\end{document}